\title{\vspace{-.5in}Bounds for  elimination of unknowns in systems of differential-algebraic  equations\vspace{-0.1in}}
\documentclass[11pt,letter]{extarticle}
\usepackage[numbers]{natbib}
\usepackage[english]{babel}
     \usepackage[dvipsnames]{xcolor}
     \usepackage[leqno]{amsmath}
\usepackage{amssymb,amsthm,pslatex,color,bm}
\usepackage[letterpaper,margin=1in]{geometry}
\usepackage{diagbox}
\usepackage{comment,enumitem}
\usepackage[ruled,vlined,linesnumbered]{algorithm2e}
\usepackage{caption} 
\captionsetup[table]{skip=6pt}

\usepackage[colorlinks, linkcolor=reference,
            citecolor=citation,
          urlcolor=e-mail]{hyperref}

\makeatletter
 \newcommand{\linkdest}[2]{\Hy@raisedlink{\hypertarget{#1}{#2}}}
\makeatother

\definecolor{e-mail}{rgb}{0,.40,.80}
\definecolor{reference}{rgb}{.20,.60,.22}
\definecolor{citation}{rgb}{0,.40,.80}
\tolerance=17000

\newtheorem{lemma}{Lemma}
\newtheorem{proposition}{Proposition}
\newtheorem{theorem}{Theorem}
\newtheorem*{theorem_cite}{Theorem}
\newtheorem{corollary}{Corollary}

\theoremstyle{definition}
\newtheorem{definition}{Definition}
\newtheorem*{remark}{Remark}
\newtheorem{remark_num}{Remark}
\newtheorem{example}{Example}

\DeclareMathOperator{\Ker}{Ker}

\DeclareMathOperator{\alg}{alg}
\date{}

\author{Alexey Ovchinnikov, Gleb Pogudin\footnote{Gleb Pogudin's prior addresses: Johannes Kepler University, Institute for Algebra; New York University, Courant Institute of Mathematical Sciences; Higher School of Economics (Moscow), Department of Computer Science}, and Thieu N. Vo\footnote{Thieu Vo's prior address: Johannes Kepler University, RISC} \smallskip\\ \small
CUNY Queens College, Department of Mathematics,
65-30 Kissena Blvd, Queens, NY 11367, USA \\ \small
CUNY Graduate Center, Department of Mathematics, 365 Fifth Avenue,
New York, NY 10016, USA\\ \small
\href{mailto:aovchinnikov@qc.cuny.edu}{aovchinnikov@qc.cuny.edu} \vspace{0.05in} \\  \small LIX, CNRS, \'Ecole Polytechnique, Institute Polytechnique de Paris, Palaiseau, France \\ \small
\href{mailto:gleb.pogudin@polytechnique.edu}{gleb.pogudin@polytechnique.edu}\\ \small Fractional Calculus, Optimization and Algebra Research Group, Faculty of Mathematics and Statistics,\\ 
\small Ton Duc Thang University, Ho Chi Minh City, Vietnam\\
\small\href{mailto:vongocthieu@tdtu.edu.vn}{vongocthieu@tdtu.edu.vn}}
\begin{document}
\maketitle
\vspace{-0.4in}
\begin{abstract}  
Elimination of unknowns in systems of equations, starting with Gaussian elimination, is a problem of general interest. 
The problem of finding an a priori upper bound for the number of differentiations in elimination of unknowns in a system of  differential-algebraic equations (DAEs) is an important challenge, going back to Ritt (1932).
The first characterization of this via an asymptotic analysis is due to Grigoriev's result (1989) on
quantifier elimination in differential fields, but the challenge still remained.

In this paper, we present a new bound, which is a major improvement over the previously known results.
We also present a new lower bound, which shows asymptotic tightness
of our upper bound in low dimensions, which are frequently occurring in applications.
Finally, we discuss applications of our results to designing  
new algorithms for  elimination of unknowns in systems of DAEs.
\end{abstract}
\vspace{-0.19in}
\section{Introduction}
\vspace{-0.07in}
Consider a system of  equations  (e.g., linear, polynomial, differential)
\vspace{-0.05in}
\begin{equation}\label{eq:main}
  f_1(\bm{x}, \bm{y}) = \ldots = f_N(\bm{x}, \bm{y}) = 0\vspace{-0.05in}
\end{equation}
 in two sets of unknowns, $\bm{x}$ and $\bm{y}$.
To \emph{eliminate} the $\bm{x}$-variables is to find, if it exists, a
nontrivial 
equation
$g(\bm{y}) = 0$  involving only the $\bm{y}$-variables that holds for every solution of~\eqref{eq:main} (a stronger version of the problem is to describe all such equations).  
Elimination of unknowns for systems of equations of different types,
starting from Gaussian elimination for linear systems,
is a classical problem.
In this paper, we study elimination of unknowns in systems of differential-algebraic equations (DAEs), existing applications of which include combinatorics~\cite{BBM2017}, mathematical analysis of dynamic models~\cite{Boulier2007,Juan2008,HOPY,Jiafan2009}, and control theory~\cite{Diop1989,Diop1991}.

The first theoretical method for elimination of unknowns in systems of DAEs was developed in~\cite[\S 67]{Ritt} by Ritt,  the founder of differential algebra.
The method can be viewed as a 
far reaching
generalization of  Gaussian elimination and was further developed, e.g., in~\cite{BLOP,HubertDiff}.
Ritt also proposed another approach~\cite[\S87-88]{Ritt}, which is similar to the prolongation-relaxation strategy used in 1847-48 by
Cayley \cite{Cayley1847,Cayley1848} and later by Macaulay~\cite[Chapter~I]{Macaulay} for polynomial equations.
Their technique
was to reduce elimination in a system of polynomial equations~\eqref{eq:main} to elimination in a system of linear equations via an upper bound $B$ such that
\vspace{-0.1in}
\begin{enumerate}[leftmargin=0.6cm,itemsep=-0.1cm,label=(\alph*)]
  \item considering the {\em prolongation}\vspace{-0.03in}
\begin{equation}\label{eq:prolpol}
\bm{x}^{\bm{\alpha}}\bm{y}^{\bm{\beta}}f_i(\bm{x},\bm{y})=0,\quad 1\leqslant i\leqslant N,\ \ |\bm{\alpha}|+|\bm{\beta}|\leqslant B,
\end{equation}
\item polynomial elimination of $\bm{y}$ in~\eqref{eq:main} is possible if and only if $\bm{y}$ can be eliminated in~\eqref{eq:prolpol} considered as a linear system in the monomials in $\bm{x}$ and $\bm{y}$ appearing in~\eqref{eq:prolpol} using Gaussian elimination ({\em relaxation}).
\end{enumerate}
Extending this idea, the approach to elimination of unknowns in a system~\eqref{eq:main} of DAEs
proposed by Ritt~was:
\begin{enumerate}[leftmargin=0.6cm,itemsep=-0.1cm,label=(\alph*)]
  \item \emph{Prolongation}: for a non-negative integer $B$, consider the derivatives 
  \vspace{-0.05in}
\begin{equation}\label{eq:diffB}
  f_i(\bm{x}, \bm{y}) =0,\; f_i(\bm{x}, \bm{y})' =0,\; \ldots,\; f_i(\bm{x}, \bm{y})^{(B)} =0,\;\; 1\leqslant i\leqslant N.\vspace{-0.08in}
  \end{equation}
  \item \emph{Relaxation}: apply polynomial elimination (for example, using~\cite[\S 55-60]{Ritt}) to~\eqref{eq:diffB} viewed as 
  polynomial equations in $\bm{x},\bm{x}',\ldots,  \bm{y},\bm{y}',\ldots$.
\end{enumerate}
The results of Ritt~\cite[\S87-88]{Ritt} imply that, for every system~\eqref{eq:main} of DAEs, the integer $B$ can be chosen large enough so that, if an elimination of $\bm{x}$ for~\eqref{eq:main} is possible, it can be performed using polynomial  elimination applied to~\eqref{eq:diffB}.
Thus, Ritt posed the following challenge in 1932~\cite[p.~118]{Ritt}, 
\begin{itemize}[leftmargin=2.95cm]\item[(Ritt's Challenge)]\em For the above process to become a genuine method of decomposition,  it would be necessary to have a method for determining permissible integers $B$.\end{itemize}
Since then, finding a bound for $B$ has been a major problem.
One of the classical results in model theory of differential fields is that the theory of differentially closed field of characteristic zero, $\mathrm{DCF}_0$,
    has quantifier elimination~\cite[Theorem~2.4]{MarkerMTDF}. 
    Using an algorithm for quantifier elimination as a black box, one can 
    solve the elimination problem, which can be encoded as an elimination of existential quantifiers for the unknowns to be eliminated.
    An asymptotic analysis for the computational complexity  of quantifier elimination in the case of constant coefficients
    was established by Grigoriev 
    in~\cite{Grigoriev}, more than 50 years after Ritt had posed the problem.
   The complexity was shown to be bounded by an expression triple-exponential in the number of variables to be eliminated, which also involved the number of other variables, the number of equations, and the size of coefficients.  
   Thus, this analysis did not give an explicit bound yielding a reasonable algorithm, so the challenge remained.  Yet not addressing the challenge, in the special case of $\bm{x}=\varnothing$, there has been progress, described in \hyperlink{relres}{Related Results}. 
 
We have overcome Ritt's challege, and our upper bound for $B$ 
 in Ritt's prolongation-relaxation process for elimination is 
 of the form (see Theorem~\ref{th:elimination_variables} for more details and Theorem~\ref{th:FullElimination} for 
a stronger version of the elimination problem):\vspace{-0.05in}
    \[
    d^{(\overline{m} + 1)2^{m + 1}},\vspace{-0.13in}
    \]
 where\vspace{-0.07in}
\begin{itemize}[leftmargin=0.5cm,itemsep=-0.04cm]
\item  $\bm{f}(\bm{x},\bm{y}) = 0$ is a system of DAEs,
\item $n = |\bm{x}|$ is the number of unknowns to be eliminated (not the total number of unknowns), 
\item
\hypertarget{def_d}{}
$h$ is the order of $\bm{f}$ in $\bm{x}$ and $d \geqslant 2$ the degree of $\bm{f}$  in $\bm{x},\bm{x}',\ldots,\bm{x}^{(h)}$,
\item\hypertarget{def_KV}{}
   $m$ and $\overline{m}$ are the dimension and codimension
    of the variety $V$ defined by $\bm{f}(\bm{x}, \bm{x}', \ldots, \bm{x}^{(h)}, \bm{y}, \bm{y}', \ldots) = 0$
    in the affine space $\mathbb{V}$
    of dimension $n(h + 1)$ 
    with coordinates $\bm{x}, \bm{x}', \ldots, \bm{x}^{(h)}$ (over the field $K$ of rational functions in $\bm{y},\bm{y}',\ldots$).
    \end{itemize}
    \vspace{-0.08in}
 The bound is polynomial in the degrees, exponential in the codimension, and doubly exponential in the dimension.

    Furthermore, if the polynomial ideal generated by $F$
    is radical, then the bound is significantly better (see Theorem~\ref{thm:radical_elimination}): \vspace{-0.05in} 
   \[
   \sum\limits_{i = 0}^m D^{2(2^{i} - 1)},
   \] 
   where $D$ is the degree of $V$ (see~\cite[p. 246]{Heintz}). 
   Concrete systems of differential equations arising in applications usually have this property, 
    and 
     many of them
    have $m = 0, 1$. 
  For instance, if the parameter identifiability problem of ODE models is approached via input-output equations, then one solves an elimination problem for a prime (and therefore radical) differential ideals (see \cite{LG94,OllivierPhD,allident,OPT19,ident-compare,Saccomani2003,meshkat2018} and the references therein). 
    The corresponding value of $m$ is equal to  $s - 2\ell$ (using Remark~\ref{rem:prolong}), where $s$ is the number of state variables and $\ell$ is the number of output variables (terminology/setup of the problem).
    Example~\ref{ex:LV} is a natural example with the resulting $m$ being $0$, more example can be found in~\cite[Appendix B]{SIAN} (half of benchmarks there have $m = 0, 1$).
    Examples~\ref{ex:vanderpol}  and~\ref{ex:pendulum} illustrate differential elimination problems in other contexts.
    
    If $m = 0$, then  the bound given by Theorem~\ref{thm:radical_elimination} is $1$, which is tight. 
    If $m = 1$, then  the bound given by Theorem~\ref{thm:radical_elimination} is at most\vspace{-0.08in}
    \[
      D^2 + 1. \vspace{-0.05in}
    \]
Our new lower bound for $m=1$ is 
$
\binom{D + 2}{2} - 1 = D^2/2+3D/2
$
(see Proposition~\ref{prop:lower_bound}), and so our upper bound is asymptotically tight for $m\leqslant 1$. 

   A bound for full elimination, which is finding all possible results of elimination of given order, 
   is presented  in Theorem~\ref{th:FullElimination}.

    Finally, we show how our bound can be used to design a  randomized (Monte Carlo) algorithm with guaranteed probability of correctness: given $0 < p < 1$, the algorithm decides whether an elimination of unknowns is possible with probability at least~$p$ (see Section~\ref{sec:probability}). 
    The implementation and examples are available at~\url{https://github.com/pogudingleb/DifferentialElimination.git}.
    
    In the remainder of the introduction, we present an outline the approach and difficulties to overcome, as well as discuss related results.

    \vspace{-0.12in}
\subsection*{Outline of the approach}   \vspace{-0.05in}
The conceptual flow of the derivation of the main results is as follows\footnote{Even though this derivation can also be viewed as a computational procedure, we are not suggesting to use this as an algorithm in practice (see Section~\ref{sec:probability} for an actual algorithm).}:\vspace{-0.07in}
\begin{enumerate}[leftmargin=0.5cm,itemsep=-0.05cm]
  \item  We reduce the case of a general system of DAEs to the case in which the system of DAEs generates a radical equidimensional (i.e., all  prime components have the same dimension)
  ideal of the polynomial ring $\hyperlink{def_KV}{K[\mathbb{V}]}$ (Section~\ref{secsub:arbitrary}).
  \item We then reduce the latter case to the case in which the system of DAEs generates a prime polynomial ideal $I$ (Section~\ref{secsub:radequidim}).
  \item The bound for the case of prime ideals is derived using the following divide-and-conquer approach (Section~\ref{secsub:prime}) with induction on $m := \dim I$:\vspace{-0.1in}
  \begin{enumerate}[leftmargin=0.7cm,itemsep=0.01cm]
    \item In the base case $m = 0$, the ideal $I$ is maximal.
    Then Lemma~\ref{lem:prolongation} implies that either $\sqrt{I^{(\infty)}} \cap K[\mathbb{V}] = I$, so the bound is 0, or $\sqrt{I^{(\infty)}} \cap K[\mathbb{V}] = I^{(1)} \cap K[\mathbb{V}] = K[\mathbb{V}]$, so the bound is 1.
    \item Suppose now that $m > 0$. If $I^{(1)}\cap K[\mathbb{V}] = I$, then the bound is again $0$ by Lemma~\ref{lem:prolongation}, and we are done with this prime component. Otherwise, we proceed as follows:
    \begin{enumerate}
      \item The key ingredient, Lemma~\ref{lem:one_polynomial}, implies that 
      there exists a polynomial $g\in I^{(1)}\cap K[\mathbb{V}]$ with $\deg g\leqslant D:=\deg I$ such that $\dim\langle I,g\rangle <\dim I$ (getting this degree bound is one of the main subtleties, thus providing a key improvement of the method used in~\cite{HrushovskiPillay,AlJerSol}).
      \item We pass to $\sqrt{\langle I, g \rangle}$ using Lemmas~\ref{lem:NoetherExponentC} and~\ref{lem:product}.
      \item Since all prime components of  
      $\sqrt{\langle I, g\rangle}$ are of dimension $m - 1$ 
      (also the sum of their degrees is at most $D^2$), we apply the argument inductively to each of them.
      \item The bounds for the prime components are combined together using Lemma~\ref{lem:intersection}.\vspace{-0.05in}
    \end{enumerate}
  \end{enumerate}
  \item The above steps yield a general bound given in Proposition~\ref{prop:any}.
  The main results are deduced from the proposition as follows:\vspace{-0.1in}
  \begin{itemize}[leftmargin=0.5cm,itemsep=-0.01cm]
    \item Theorem~\ref{thm:radical_elimination} follows from the proposition by restriction to radical ideals.
    \item Theorems~\ref{th:elimination_variables} and~\ref{th:FullElimination} are derived from the proposition by estimating the geometric data in terms of the combinatorial data (e.g., $D \leqslant \hyperlink{def_d}{d}^{\hyperlink{def_KV}{\overline{m}}}$, where $\overline{m}$ is the codimension of the corresponding variety).
  \end{itemize}
\end{enumerate}

\vspace{-0.05in}
We derive the asymptotic tightness of our bound
for $m = 1$ by finding a witness (for a quadratic lower bound)
of the form $x'=1$, $y'=y$, $P(x,y)=0$, with $\deg P\leqslant D$, that nevertheless has an ``approximate solution'' $x(t)=t$, $y(t)=e^t$ 
(that is, the equations in the system vanish at $t = 0$ up to order $\binom{D + 2}{2} - 1$ after substituting $(t, e^t)$).

We  derive a randomized (Monte Carlo) algorithm with guaranteed probability of correctness as follows: \vspace{-0.25in}
\begin{itemize}[leftmargin=0.5cm,itemsep=-0.05cm]
  \item Theorems~\ref{th:elimination_variables} and~\ref{thm:radical_elimination} reduce determining the possibility of elimination for a system of DAEs to determining the possibility of elimination for a polynomial system  in 
  $q$
  unknowns  
  $\bm{z} = \big(\bm{x}, \bm{x}',\ldots, \bm{x}^{(B_1)}\big)$ and $r$ unknowns $\bm{w} = \big(\bm{y}, \bm{y}',\ldots, \bm{y}^{(B_2)}\big)$ for suitable $B_1$ and $B_2$. 
  \item An elimination of the $\bm{z}$ variables for a system $\bm{p}(\bm{z}, \bm{w}) = 0$ of polynomial equations is possible if and only if the projection $\pi$ of the variety $X \subset\mathbb{A}^q\times\mathbb{A}^r$ defined by $\bm{p}(\bm{z}, \bm{w}) = 0$ to the $\bm{w}$-coordinates is not dominant.
  \item We check the dominance of $\pi$
  by determining whether the fiber over a random point on the $\bm{w}$-plane is not empty (cf.~\cite{RSV2018}).
The dimension $r$ of the search space 
  is bounded by Theorems~\ref{th:elimination_variables} and~\ref{thm:radical_elimination}.
  If every coordinate of a random point is sampled from a finite set $S$ (e.g., a finite set of integers), then the nonemptyness of the fiber is equivalent to the dominance of $\pi$
  with probability at least\vspace{-0.08in}
  \[
    1 - \deg X/|S|.\vspace{-0.12in}
  \]
  We show this by proving, in particular, that\vspace{-0.07in}
  \[
  \overline{\pi(X)}=\mathbb{A}^r\implies\frac{|S^r\cap Z|}{|S^r|}\leqslant \frac{\deg X}{|S|},\quad Z := \mathbb{A}^r\setminus \pi(X).
  \]\hypertarget{relres}{}
\end{itemize}
\vspace{-0.32in}
    \subsection*{Related results}\label{subsec:related_results}

    There are  related bounds for other problems about systems of DAEs:
    \vspace{-0.05in}\begin{itemize}[leftmargin=0.5cm,itemsep=-0.05cm]
      \item \textbf{Determining consistency.}
      To determine the consistency of a system of DAEs using the prolongation-relaxation strategy (also referred to as effective differential Nullstellensatz) is a special case of elimination in systems of DAEs because a system of DAEs is inconsistent if and only if it is possible to eliminate all of the unknowns (i.e., to derive a consequence of the form $1 = 0$). 
There has been significant progress in analyzing this problem~\cite{Seidenberg,GolubitskyEtAl,AlJerSol,Gustavson,TS2019}. However, it has been a challenge to find practical upper bounds for this problem, as the upper bounds obtained there
\vspace{-0.05in}
  \begin{itemize}[leftmargin=0.4cm,itemsep=-0.05cm]
  \item either are asymptotic 
  and so cannot be used in a differential elimination algorithm directly, 
  \item or have values that make them impossible to be used even for small examples.
  \end{itemize}\vspace{-0.05in}
  Our results address both issues for DAEs for the consistency problem.
       \item \textbf{Differential resultants}  can be used to give a  solution to the elimination problem  of generic systems of DAEs of a special form (see \cite{RuedaSendra2010,Rueda2013,GaoLiYuan2013,LiYuanGao2015,LiYuan2019} and the references given there).
      \item \textbf{Counting solutions.}
      Unlike in usual applications to modeling and sciences, some systems of DAEs arising in algebraic number theory (see, e.g., \cite[Section~5]{HrushovskiPillay} and \cite[Sections~5.1-5.2]{FreitagScanlon}) have only finitely many solutions, and an important problem is estimate this number.
      Such bounds were obtained and applied to number-theoretic problems in~\cite{HrushovskiPillay,FreitagSanchez,Gal,FreitagScanlon}.
      Theorem~\ref{th:FullElimination} can be used to design a prolongation-relaxation algorithm for determining the number of solutions of a given DAE (see Remark~\ref{rem:count}).
    \end{itemize}
    
\vspace{-0.2in}
\section{Preliminaries and main results}\label{sec:statements}
\vspace{-0.05in}
\subsection{Differential Algebra}\label{subsec:prelim_diff_alg}

	Throughout the paper, all fields are assumed to be of characteristic $0$.
	Let $R$ be a commutative ring.
	\begin{definition}[Differential rings]
    \begin{itemize}[leftmargin=0.5cm,itemsep=-0.05cm]
    \item[]
    \item
    	A map $D\colon R \to R$ satisfying $D(a + b) = D(a) + D(b)$ and $D(ab) = aD(b) + D(a)b$ for all $a, b \in R$ is called a \textit{derivation}.
        \item 
	    A \textit{differential ring} $R$ is a ring with a specified derivation $D$.
	In this case, we will denote $D(x)$ by $x^{\prime}$ and $D^n(x)$ by $x^{(n)}$.
    \item
	A differential ring that is a field will be called a \textit{differential field}.
   \item
    	A differential ring $A$ is said to be a \textit{differential $k$-algebra} over a differential field $k$ if $A$ is a $k$-algebra and the restriction of the derivation of $A$ on $k$ coincides with the derivation on $k$.
        \item    \hypertarget{diff_poly}{} 	Let $A$ be a differential $k$-algebra.
        \begin{itemize}[leftmargin=0.4cm,itemsep=0.05cm]
        \item 
	    We consider the polynomial ring $A\big[x^{(0)}, x^{(1)}, x^{(2)}, \ldots\big]$, where $x^{(0)}, x^{(1)}, x^{(2)}, \ldots$ are algebraically independent variables.
        We will also use the notation $x, x', x''$ for $x^{(0)}, x^{(1)}, x^{(2)}$, respectively.
        \item For $h\geqslant 0$, the polynomial algebra  $A\big[x^{(0)},x^{(1)},\ldots,x^{(h-1)}\big]$ is denoted by $A[x_h]$.
        \item Extending, for a tuple $\bm{\alpha}=(\alpha_1, \ldots, \alpha_n)\in\mathbb{Z}_{\geqslant 0}^n$ and variables $\bm{x}=(x_1,\ldots,x_n)$, the corresponding polynomial algebra is denoted by $A[\bm{x}_{\bm \alpha}]$.
	\item    Extending the derivation from $A$ to $A\big[x^{(0)}, x^{(1)}, x^{(2)}, \ldots\big]$ by $D(x^{(i)}) = x^{(i + 1)}$, we obtain a differential algebra.
	\item    This algebra is called the \textit{algebra of differential polynomials} in $x$ over $A$ and denoted  by $A[x_\infty]$.
	  \item  Iterating this construction, we define the algebra of differential polynomials in variables $\bm{x} := x_1, \ldots, x_n$ over $A$ and denote it by $A[\bm{x}_{\bm{\infty}}]$.
     If $A$ is a field, then the field of fractions of $A[\bm{x}_{\bm{\infty}}]$ is denoted by $A(\bm{x}_{\bm{\infty}})$.
        \end{itemize}
        \end{itemize}
	\end{definition}
	\begin{definition}[Ideals]\label{def:ideals}\hypertarget{def_ideals}{}
    \begin{itemize}[leftmargin=0.5cm,itemsep=-0.05cm]
    \item[]
    \item The ideal of a ring $R$ generated by $a_1,\ldots,a_n\in R$ will be denoted by $\langle a_1,\ldots,a_n\rangle$.
    \item
    	An ideal $I$ of a differential ring $R$ is said to be a \textit{differential ideal} if $a^{\prime} \in I$ for all $a \in I$.
	\item	The differential ideal generated by $a_1, \ldots, a_n \in R$ will be denoted by \hypertarget{infty_ideal}{$\langle a_1, \ldots, a_n\rangle^{ (\infty)}$}.
    \item For an ideal $I$  (not necessarily differential) of $k[\bm{x}_{\bm\infty}]$,
     $I^{(h)}$ denotes the ideal generated by all elements of the form $a^{(j)}$, where $a \in I$ and $j \leqslant h$. 
     If $h=\infty$, then $I^{(h)}$ denotes $\langle I \rangle^{(\infty)}$.
     \item An ideal $I$ is \textit{radical} if, whenever $a^n \in I$ for some $n > 0$, $a \in I$. The smallest radical ideal containing $a_1, \ldots, a_n$ will be denoted by $\sqrt{\langle a_1, \ldots, a_n\rangle}$. 
     \item For an ideal $I$ and a nonnegative integer $i$, the {\em equidimensional component} of $I$ of dimension $i$ is the intersection of prime components of $\sqrt{I}$ of dimension $i$.
     \item For a variety $X$, $\deg X$  denotes the degree of $X$ (see~\cite[Definition~1 and Remark~2]{Heintz}).
     \end{itemize}
    \end{definition}

 The following is a version of Hilbert's Nullstellensatz for  DAEs, which shows the correctness of the prolongation-relaxation approach to elimination for systems of DAEs.

   \begin{theorem_cite}[{\cite[Theorem~IV.2.1]{Kol}}]
		For all $f_1, \ldots, f_N \in k[{\bm x_{\bm \infty}}, \bm{y}_{\bm{\infty}}]$ and $g \in k[\bm{y}_{\bm{\infty}}]$, the following are equivalent\vspace{-0.05in}
        \begin{enumerate}[label = (\alph*),leftmargin=0.7cm,itemsep=-0.25cm]
          \item for every $(\bm{x}^\ast, \bm{y}^\ast)$ in every differential field extension of $k$,\vspace{-0.05in}
          \[
            f_1(\bm{x}^\ast, \bm{y}^\ast) = \ldots = f_N(\bm{x}^\ast, \bm{y}^\ast) = 0 \implies g(\bm{y}^\ast) = 0;
          \]
          \item there exists $M$ such that $g^M \in \langle f_1, \ldots, f_N \rangle^{(\infty)}$.
        \end{enumerate}
	\end{theorem_cite}

\subsection{Main result}\label{subsec:main_results}

In this section, we state our main results, and their consequences. Proofs are postponed until Section~\ref{sec:proofs}.

\begin{theorem}[Bound for an elimination]\label{th:elimination_variables}
	For all integers $s, t \geqslant 0$,  
    tuples $\bm{\alpha} = (\alpha_1, \ldots, \alpha_{s})\in \mathbb{Z}_{\geqslant 0}^{s}$,  and  $F\subset k(\hyperlink{diff_poly}{\bm{y}_{\bm\infty}})[\hyperlink{diff_poly}{\bm{x}_{\bm{\alpha}}}]
    $,  
    \[
    \langle F \rangle^{\hyperlink{def_ideals}{(\infty)}} \cap k[\bm{y}_{\bm\infty}] =  \{0\} \iff \langle F\rangle^{ \hyperlink{def_ideals}{(B)} } \cap k[\bm{y}_{\bm\infty}] =\{0\},\vspace{-0.05in}
    \]
    where\vspace{-0.03in}
   \begin{itemize}[leftmargin=0.5cm,itemsep=0.02cm]
   \item
     $ \label{eq:defN}
B = 
\begin{cases}  
	d^{(|\bm{\alpha}| - m + 1) 2^{m + 1}} ,& d \geqslant 2,\\
	m+1 , &d=1,
\end{cases}
$
\item $|\bm{\alpha}|= \alpha_1 + \ldots + \alpha_s$,
\item 
$\bm{x} := (x_1, \ldots, x_{s})$, $\bm{y} :=(y_1, \ldots, y_{t})$,

    	\item 
        $d= \max\limits_{f \in F} \deg_{\bm{x}} f$,
        
        \item $m=\dim\langle F\rangle$ in $k(\bm{y}_{\bm\infty})[\bm{x}_{\bm{\alpha}}]$.
    \end{itemize}
\end{theorem}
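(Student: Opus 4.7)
The plan is to deduce the theorem from Proposition~\ref{prop:any} (the general geometric bound for arbitrary ideals outlined in step~4 of the approach) by translating the geometric invariants appearing there into the combinatorial data $d$, $|\bm{\alpha}|$, $m$ present in the statement. The implication ``$\Rightarrow$'' is immediate since $\langle F\rangle^{(B)}\subseteq \langle F\rangle^{(\infty)}$, so the substantive content is the reverse direction, i.e.\ showing that truncating the prolongation at order $B$ already exhibits every nontrivial consequence in $k[\bm{y}_{\bm\infty}]$.

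First I would view $F$ as generating a polynomial ideal $I:=\langle F\rangle$ in $K[\bm{x}_{\bm\alpha}]$, where $K=k(\bm{y}_{\bm\infty})$ plays the role of the field of constants; the ambient affine space $\mathbb{V}$ has dimension $|\bm{\alpha}|$, and the variety $V:=V(I)\subseteq\mathbb{V}$ has dimension $m$ by hypothesis, hence codimension $\overline{m}:=|\bm{\alpha}|-m$. Apply Proposition~\ref{prop:any} to $I$, which yields a bound for $B$ as a function of $\deg V$, $m$ and $|\bm{\alpha}|$. The next step is to eliminate $\deg V$ in favour of the degree data $d$ via the B\'ezout-style estimate $\deg V\leqslant d^{\overline{m}}$ recorded in~\cite{Heintz}, and to substitute this into the bound from Proposition~\ref{prop:any}.

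The case $d=1$ should be handled separately: then $I$ is generated by $K$-linear forms in the variables $\bm{x}_{\bm\alpha}$, so each prolongation either is redundant in $K[\mathbb{V}]$ or strictly decreases the dimension of the variety. Since the starting dimension is $m$, after at most $m$ dimension drops we reach either the empty variety or a fixed point, and the additional $+1$ accounts for certifying fixedness via one more prolongation, yielding the bound $m+1$.

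The main obstacle I foresee is the arithmetic of the substitution step: the exponent $(\overline{m}+1)2^{m+1}$ has to come out cleanly from plugging $D\leadsto d^{\overline{m}}$ into the geometric bound, which (according to the outline and Theorem~\ref{thm:radical_elimination}) is shaped like a telescoping sum with repeated squaring, roughly $\sum_{i=0}^m D^{2(2^i-1)}$ in the radical case and with additional overhead coming from the reduction to the radical equidimensional case carried out in Sections~\ref{secsub:arbitrary} and~\ref{secsub:radequidim}. The careful bookkeeping lives in verifying that this overhead is absorbed by the single extra factor of $\overline{m}+1$ appearing in the exponent, rather than forcing an extra round of squaring in $m$; I would estimate each contribution separately (arbitrary $\to$ radical equidimensional $\to$ prime, then induction on $m$) and finally check monotonicity of $d^{(\overline{m}+1)2^{m+1}}$ in $\overline{m}$ so that any crude over-estimate of the codimension at intermediate stages is harmless.
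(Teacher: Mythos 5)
Your proposal follows essentially the same route as the paper: the nontrivial direction is reduced to Proposition~\ref{prop:any} applied to $\langle F\rangle$ over $K=k(\bm{y}_{\bm\infty})$, and the resulting bound $\mu\cdot\sum_{i=0}^m B(i,D_i)$ is then converted into combinatorial data. The only ingredients you leave implicit are exactly the ones the paper supplies to make the conversion work: per-component degree bounds $D_i\leqslant d_0\,d^{|\bm\alpha|-i-1}$ obtained by replacing $F$ with $|\bm\alpha|-i$ generic linear combinations (Jeronimo--Sabia) --- a direct B\'ezout bound on $F$ would depend on $|F|$ rather than $|\bm\alpha|$ --- together with the Noether-exponent bound $\mu\leqslant d_0\,d^{\min(|F|,|\bm\alpha|)-1}$ (Jelonek); also, for $d=1$ the paper simply observes that $V(F)$ is an irreducible linear variety of degree $1$ and reads $B=m+1$ off the general formula, which is cleaner (and safer) than your prolongation-by-prolongation dimension-drop argument, whose notion of ``dimension'' is delicate because each prolongation enlarges the ambient polynomial ring.
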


For many systems arising in applications, the ideal generated by $F$ turns out to be radical
(see examples in Section~\ref{sec:examples_numbers}). In this situation,  we present an improvement to Theorem~\ref{th:elimination_variables},      
in Theorem~\ref{thm:radical_elimination}.
It follows from our proofs, that this  new upper bound is always smaller that one provided by Theorem~\ref{th:elimination_variables}.

\begin{theorem}[Bound for an elimination for radical ideals]\label{thm:radical_elimination}
   For all integers $s, t \geqslant 0$,  
    tuples $\bm{\alpha} = (\alpha_1, \ldots, \alpha_{s})\in \mathbb{Z}_{\geqslant 0}^{s}$,  and  $F\subset k(\hyperlink{diff_poly}{\bm{y}_{\bm\infty}})[\hyperlink{diff_poly}{\bm{x}_{\bm{\alpha}}}]
    $,
	if the ideal $\langle F\rangle$ of $k(\bm{y}_{\bm\infty})[\bm{x}_{\bm{\alpha}}]$ 
    is \hyperlink{def_ideals}{radical}, then
    \[
    \langle F \rangle^{\hyperlink{def_ideals}{(\infty)}} \cap k[\bm{y}_{\bm\infty}] =  \{0\} \iff \langle F\rangle^{\hyperlink{def_ideals}{(B)}} \cap k[\bm{y}_{\bm\infty}] = \{0\},\vspace{-0.08in}
    \]
    where \vspace{-0.03in}
    \begin{itemize}[leftmargin=0.5cm,itemsep=0.02cm]
    \item 
    $B = \sum\limits_{0 \leqslant i \leqslant j \leqslant m} D_j^{2(2^{i} - 1)},$
    \item 
$\bm{x} := (x_1, \ldots, x_{s})$, $\bm{y} :=(y_1, \ldots, y_{t})$,
    \item
    $D_j$ is the degree of the \hyperlink{def_ideals}{equidimensional component} of $\langle F\rangle$ of dimension $j$ in $k(\bm{y}_{\bm\infty})[\bm{x}_{\bm{\alpha}}]$, 
    \item we use the convention $0^0 = 0$.
    \end{itemize}
\end{theorem}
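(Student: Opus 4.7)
The plan is to deduce this theorem from the general bound stated later in Proposition~\ref{prop:any}, specialized to the radical case so that every Noether exponent appearing in that proposition collapses to $1$. The direction $\Rightarrow$ is immediate since $\langle F\rangle^{(B)} \subseteq \langle F\rangle^{(\infty)}$; the content is in $\Leftarrow$, equivalently, that whenever a $\bm{y}$-only elimination exists it already surfaces after prolongation to order $B$.

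Concretely, I would first decompose the radical ideal $\langle F\rangle \subseteq k(\bm{y}_{\bm\infty})[\bm{x}_{\bm\alpha}]$ as the intersection of its equidimensional components $I_0,\ldots, I_m$, with $\dim I_j = j$ and $\deg I_j = D_j$. To each prime component $\mathfrak{p}$ of a given $I_j$ I would apply the divide-and-conquer procedure of Section~\ref{secsub:prime}: use Lemma~\ref{lem:one_polynomial} to produce a cutting polynomial $g \in \mathfrak{p}^{(1)} \cap k(\bm{y}_{\bm\infty})[\bm{x}_{\bm\alpha}]$ with $\deg g \leq \deg\mathfrak{p}$; pass to $\sqrt{\langle \mathfrak{p}, g\rangle}$ via Lemmas~\ref{lem:NoetherExponentC} and~\ref{lem:product} (the radicality hypothesis removes all Noether-exponent contributions); and recurse on each of its prime components, which have dimension $j-1$ and total degree at most $(\deg\mathfrak{p})^2$. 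The base case $j = 0$ contributes at most $1$ by Lemma~\ref{lem:prolongation}. Solving the resulting recursion yields a per-prime bound of $\sum_{i=0}^{j} (\deg\mathfrak{p})^{2(2^i-1)}$; combining primes of the same dimension using $\sum_\ell \deg\mathfrak{p}_{j,\ell} = D_j$ and the inequality $\sum_\ell x_\ell^p \leq \bigl(\sum_\ell x_\ell\bigr)^p$ for $p \geq 1$ yields $\sum_{i=0}^j D_j^{2(2^i-1)}$ per equidimensional piece, and a final application of Lemma~\ref{lem:intersection} across the dimensions $j = 0, \ldots, m$ produces the stated total.

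The principal obstacle is the two-way interaction between recursion depth and degree growth: each level of recursion squares the relevant degree (from $D$ to $D^2$ via the Bezout-type estimate in Lemma~\ref{lem:product}), which is precisely what produces the $2(2^i - 1)$ exponent. The outline explicitly flags the degree bound $\deg g \leq \deg\mathfrak{p}$ in Lemma~\ref{lem:one_polynomial} as \emph{one of the main subtleties}, since any weakening would propagate geometrically through the recursion and inflate the final bound. A secondary issue is that the $i = 0$ term in the per-prime sum equals $1$ per prime under the convention $0^0 = 0$, so the clean inequality above fails at $p = 0$; this must be absorbed either by a separate counting of prime components or through the way Lemma~\ref{lem:intersection} aggregates contributions across primes, which is where the radicality hypothesis again plays an essential role by ruling out embedded primes and multiplicative blow-ups.
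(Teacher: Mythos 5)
Your proposal is correct and follows essentially the same route as the paper: Theorem~\ref{thm:radical_elimination} is obtained by applying Proposition~\ref{prop:any} with Noether exponent $\mu=1$ (after translating the elimination statement into $1\in J^{(\infty)}$ over the coefficient field $k(\bm{y}_{\bm\infty})$), and Proposition~\ref{prop:any} is proved by exactly the prime-by-prime recursion you describe. The constant-term issue you flag at the end is resolved in the paper by Lemma~\ref{lem:polynomial_convexity} (whose hypothesis $\deg p\geqslant 2$ lets the cross term of $(a+b)^d$ absorb the extra $+1$ per prime) together with a separate direct argument for the zero-dimensional equidimensional component in Proposition~\ref{prop:equidimensional}.
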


For example, if $m=0$, then $B=D_0^0$. If $m=1$, then $B=D_1^2+1+D_0^0$.

\begin{theorem}[Bound for full elimination] \label{th:FullElimination}
For all integers $s, t \geqslant 0$,  
    tuples $\bm{\alpha} = (\alpha_1, \ldots, \alpha_{s})\in \mathbb{Z}_{\geqslant 0}^{s}$, $\bm{\beta} = (\beta_1, \ldots, \beta_{t})\in \mathbb{Z}_{\geqslant 0}^{s}$,  and  $F\subset k[\hyperlink{diff_poly}{\bm{x}_{\bm{\alpha}}}, \hyperlink{diff_poly}{\bm{y}_{\bm{\beta}}}]
    $,  
 \[\sqrt{\langle F \rangle^{\hyperlink{def_ideals}{(\infty)}}} \cap k[\bm{y}_{\bm{\beta}}] =  \sqrt{\langle F\rangle^{ \hyperlink{def_ideals}{(B)}}} \cap k[\bm{y}_{\bm{\beta}}],\vspace{-0.05in}\]
 where
     \begin{itemize} [leftmargin=0.5cm,itemsep=0.02cm] 
     \item $B=\begin{cases}  
	d^{(|\bm{\alpha}| + |\bm{\beta}| - m + 1) 2^{m + 1}} ,& d \geqslant 2,\\
	m+1 , &d=1,
\end{cases}$

\item 
$\bm{x} := (x_1, \ldots, x_{s})$, $\bm{y} :=(y_1, \ldots, y_{t})$,

\item $|\bm{\alpha}|=\alpha_1 + \ldots + \alpha_{s}$, $|\bm{\beta}| = \beta_1 + \ldots + \beta_t$,
     \item $d= \max\limits_{f \in F} \deg f$,
       \item
$m=\dim\langle F\rangle$ in $k[\bm{x}_{\bm{\alpha}}, \bm{y}_{\bm{\beta}}]$.
\end{itemize}    
\end{theorem}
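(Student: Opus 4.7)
The plan is to derive Theorem~\ref{th:FullElimination} from Proposition~\ref{prop:any}---the central geometric bound for the stabilization of the radical differential closure---applied this time to the polynomial ring $R := k[\bm{x}_{\bm{\alpha}}, \bm{y}_{\bm{\beta}}]$ viewed as the coordinate ring of the ambient affine space $\mathbb{A}^{|\bm{\alpha}|+|\bm{\beta}|}$ over $k$. This parallels the derivation of Theorem~\ref{th:elimination_variables}, the essential difference being that there the ambient ring is $k(\bm{y}_{\bm{\infty}})[\bm{x}_{\bm{\alpha}}]$ (with the $\bm{y}$-variables absorbed into the base field), whereas here the $\bm{y}_{\bm{\beta}}$-variables are treated symmetrically with the $\bm{x}$-variables as polynomial indeterminates of the ambient affine space.

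First, the inclusion $\sqrt{\langle F \rangle^{(B)}} \cap k[\bm{y}_{\bm{\beta}}] \subseteq \sqrt{\langle F \rangle^{(\infty)}} \cap k[\bm{y}_{\bm{\beta}}]$ is immediate from $\langle F \rangle^{(B)} \subseteq \langle F \rangle^{(\infty)}$. For the reverse, I would invoke Proposition~\ref{prop:any} applied to the ideal $\langle F \rangle \subset R$ to obtain a bound $B$ for which
\[
\sqrt{\langle F \rangle^{(\infty)}} \cap R \;=\; \sqrt{\langle F \rangle^{(B)}} \cap R,
\]
and then intersect both sides with the subring $k[\bm{y}_{\bm{\beta}}] \subseteq R$ to obtain the desired equality of $\bm{y}$-elimination ideals.

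Next, I would convert the geometric bound furnished by Proposition~\ref{prop:any} (phrased in terms of dimensions and degrees of the prime components of $\langle F\rangle$) into the combinatorial form claimed. The variety $V(F) \subset \mathbb{A}^{|\bm{\alpha}| + |\bm{\beta}|}$ has dimension $m$, so by the Bezout-type inequality~\cite[Remark~2]{Heintz}, each prime component of $\sqrt{\langle F \rangle}$ has degree at most $d^{|\bm{\alpha}| + |\bm{\beta}| - m}$. Substituting this estimate into the proposition's bound and simplifying in exactly the same way as in the derivation of Theorem~\ref{th:elimination_variables}---with $|\bm{\alpha}|$ replaced throughout by $|\bm{\alpha}| + |\bm{\beta}|$---yields $B \leqslant d^{(|\bm{\alpha}| + |\bm{\beta}| - m + 1) 2^{m+1}}$ when $d \geqslant 2$, and $B \leqslant m + 1$ when $d = 1$ (the linear case, where the proposition's bound collapses accordingly).

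The main point requiring care is that Proposition~\ref{prop:any} must deliver a single bound $B$ valid for the entire ideal $\sqrt{\langle F\rangle^{(\infty)}} \cap R$ uniformly---not merely a separate bound for each candidate $g \in k[\bm{y}_{\bm{\beta}}]$---so that the final $B$ in Theorem~\ref{th:FullElimination} depends only on $d, |\bm{\alpha}|, |\bm{\beta}|, m$ and is independent of the generators of the elimination ideal. The argument does not require $\langle F\rangle$ to be radical, since the proposition handles the reduction to the radical equidimensional, and ultimately prime, case internally.
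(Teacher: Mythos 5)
Your proposal is correct and follows essentially the same route as the paper: apply Proposition~\ref{prop:any} to $\langle F\rangle\subset k[\bm{x}_{\bm{\alpha}},\bm{y}_{\bm{\beta}}]$, intersect both sides with $k[\bm{y}_{\bm{\beta}}]$, and then estimate the resulting geometric bound exactly as in the proof of Theorem~\ref{th:elimination_variables} with $|\bm{\alpha}|$ replaced by $|\bm{\alpha}|+|\bm{\beta}|$. No gaps.
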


\begin{remark}\label{rem:tight}
	In Theorems~\ref{th:elimination_variables} and~\ref{th:FullElimination}, the expressions for the value of $B$ can be replaced by the tighter ones obtained
    in  inequality~\eqref{inequ:TighterBound}.
\end{remark}

\begin{proposition}[Lower bound for elimination]\label{prop:lower_bound}
  For every positive integer $d$, there exists an irreducible polynomial $P \in \mathbb{Q}[x, y]$ of degree at most $d$ such that \vspace{-0.05in}
  \begin{align*}
    1 &\in \big\langle x' - 1, y' - y, P(x, y) \big\rangle^{\hyperlink{def_ideals}{(\infty)}},\\
    1 &\not\in \big\langle x' - 1, y' - y, P(x, y) \big\rangle^{\hyperlink{def_ideals}{(B - 1)}},
  \end{align*}
  where $B = \binom{d + 2}{2} - 1 = \frac{d (d + 3)}{2}$.
\end{proposition}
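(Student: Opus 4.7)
The plan is to construct an irreducible $P\in\mathbb{Q}[x,y]$ of degree at most $d$ whose power series $P(t,e^t)\in\mathbb{Q}[[t]]$ vanishes at $t=0$ to order exactly $B=\binom{d+2}{2}-1$. Since $(x,y)=(t,e^t)$ is a formal solution of $x'=1,\,y'=y$, such a $P$ will produce an ``approximate solution'' $(0,1)$ of the $(B-1)$-prolongated polynomial system, giving $1\notin\langle x'-1,y'-y,P\rangle^{(B-1)}$, while the absence of any genuine algebraic differential solution will give $1\in\langle x'-1,y'-y,P\rangle^{(\infty)}$ via the differential Nullstellensatz stated in the paper.

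Existence of $P$ is a dimension count: the $\mathbb{Q}$-linear map $\varphi\colon\mathbb{Q}[x,y]_{\leqslant d}\to\mathbb{Q}^B$ sending $Q$ to the first $B$ Taylor coefficients of $Q(t,e^t)$ has domain of dimension $\binom{d+2}{2}=B+1$, so $\ker\varphi\neq 0$; pick a nonzero $P\in\ker\varphi$. To show the vanishing order is \emph{exactly} $B$ and to set up irreducibility, I would prove the matching upper bound $v(Q(t,e^t))\leqslant\binom{\deg Q+2}{2}-1$ for every nonzero $Q$, by Laplace-transforming $Q(t,e^t)=\sum_{i+j\leqslant\deg Q}a_{ij}t^ie^{jt}$ into a rational function with denominator $\prod_{j=0}^{\deg Q}(s-j)^{\deg Q-j+1}$ of degree $\binom{\deg Q+2}{2}$ and numerator of strictly smaller degree, then comparing asymptotics as $s\to\infty$. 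With this in hand, any nontrivial factorization $P=AC$ with $\deg A,\deg C\geqslant 1$ would give
\[
v(P(t,e^t))=v(A(t,e^t))+v(C(t,e^t))\leqslant\binom{\deg A+2}{2}+\binom{\deg C+2}{2}-2=B-\deg A\cdot\deg C\leqslant B-1,
\]
contradicting $v(P(t,e^t))=B$; hence $P$ is irreducible. The same valuation bound also rules out $y\mid P$, since that would force $P=cy$ and $v(P(t,e^t))=0$.

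The two conclusions now follow formally. For $1\notin\langle x'-1,y'-y,P\rangle^{(B-1)}$, I would define the ring homomorphism $\pi$ by $\pi(x)=x$, $\pi(x')=1$, $\pi(x^{(i)})=0$ for $i\geqslant 2$, and $\pi(y^{(i)})=y$ for all $i\geqslant 0$. A short induction shows $\pi(P^{(k)})=\delta^k P$ with $\delta=\partial_x+y\partial_y$, and by construction $(\delta^k P)(0,1)=\tfrac{d^k}{dt^k}P(t,e^t)\big|_{t=0}=0$ for $k\leqslant B-1$; so $(0,1)$ is a common zero of $P,\delta P,\ldots,\delta^{B-1}P$ in $\mathbb{Q}[x,y]$, whence $1\notin\pi(\langle x'-1,y'-y,P\rangle^{(B-1)})$ and thus $1\notin\langle x'-1,y'-y,P\rangle^{(B-1)}$. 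For $1\in\langle x'-1,y'-y,P\rangle^{(\infty)}$, the differential Nullstellensatz reduces the problem to ruling out a solution $(x^*,y^*)$ in any differential field extension of $\mathbb{Q}$: since $(x^*)'=1$ forces $x^*$ transcendental over the constants $K_0$, the case $y^*=0$ would force $y\mid P$ (contradicting our construction), and if $y^*\neq 0$ then a standard argument on a hypothetical minimal polynomial of $y^*$ (yielding $a_i'=(n-i)a_i$ for its coefficients, which has only zero solutions in $K_0(x^*)$) forces $y^*$ transcendental over $K_0(x^*)$, making $P(x^*,y^*)=0$ imply $P=0$, again a contradiction. The main obstacle is proving the sharp valuation bound $v(Q(t,e^t))\leqslant\binom{\deg Q+2}{2}-1$; once that Laplace-transform / generalized-Wronskian estimate is in hand, the rest of the argument is essentially formal.
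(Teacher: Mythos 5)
Your proposal is correct and follows the same overall strategy as the paper: the witness system $x'=1$, $y'=y$, $P(x,y)=0$ with approximate solution $(t,e^t)$, the linear-algebra construction of $P$ from the $B{+}1$-dimensional space of degree-$\leqslant d$ polynomials, the sharp bound on the vanishing order of $Q(t,e^t)$ used to force irreducibility, and the two-sided conclusion (approximate solution kills membership at level $B-1$; genuine inconsistency gives membership at level $\infty$). The differences are in how two sub-steps are carried out. For the key estimate $v(Q(t,e^t))\leqslant\binom{\deg Q+2}{2}-1$, the paper exhibits the annihilating operator $D=\partial_t^{d+1}(\partial_t-1)^d\cdots(\partial_t-d)$ of order $\binom{d+2}{2}$ and invokes uniqueness of solutions of linear ODEs, whereas you take the Laplace/Borel transform and compare degrees of numerator and denominator of the resulting rational function; these are dual formulations of the same fact, and both are fine (your version has the small bonus of making the exactness $v=B$ and the identity $v(AC)=v(A)+v(C)\leqslant B-\deg A\cdot\deg C$ transparent, which streamlines the irreducibility step). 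For the inconsistency $1\in\langle x'-1,y'-y,P\rangle^{(\infty)}$, the paper cites a result reducing consistency of constant-coefficient systems to the existence of a power series solution and then classifies solutions of $x'=1,\ y'=y$ in $\mathbb{C}[\![t]\!]$ as $(t+a,\,be^t)$; you instead argue directly in an arbitrary differential field extension, showing $x^*$ is transcendental over the constants and $y^*$ (if nonzero) is transcendental over $K_0(x^*)$ via the logarithmic-derivative equation $a_i'=(n-i)a_i$ for the coefficients of a putative minimal polynomial. Your route is self-contained modulo standard differential algebra and avoids the external citation, at the cost of a slightly longer transcendence argument; both are valid. Finally, your homomorphism $\pi$ with $\pi\circ D=\delta\circ\pi$ followed by evaluation at $(0,1)$ is just a factored form of the paper's Lemma~\ref{lem:lower_bound}. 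I see no gaps.
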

\begin{corollary}\label{cor:1}
 The bound in Theorem~\ref{thm:radical_elimination} is asymptotically tight for $m\leqslant 1$.
\end{corollary}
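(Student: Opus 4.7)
The plan is to realize $(x(t), y(t)) = (t, e^t)$ as an ``approximate solution'' and extract $P$ via linear algebra over $\mathbb{Q}$. Set $N := \binom{d+2}{2} = B + 1$, and let $V_d \subset \mathbb{Q}[x, y]$ be the $N$-dimensional space of polynomials of degree at most $d$. The $\mathbb{Q}$-linear map $\phi \colon V_d \to \mathbb{Q}^{B}$ sending $Q$ to the first $B$ Taylor coefficients of $Q(t, e^t)$ at $t = 0$ has nontrivial kernel by dimension count, so there exists a nonzero $P \in \ker\phi$; equivalently, $P(t, e^t) = O(t^B)$. I will verify that any such $P$ is automatically irreducible with $y \nmid P$, and that both claimed differential-algebraic properties hold.

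The non-membership $1 \not\in \langle x'-1,\, y'-y,\, P\rangle^{(B-1)}$ will follow from evaluation via the differential-ring homomorphism $\Phi \colon \mathbb{Q}[x_\infty, y_\infty] \to \mathbb{Q}[[t]]$ determined by $x \mapsto t$, $y \mapsto e^t$. It annihilates $(x'-1)^{(i)}$ and $(y'-y)^{(i)}$ for every $i$, while $\Phi(P^{(i)}) = (d/dt)^i P(t, e^t)$ lies in the maximal ideal $(t) \subset \mathbb{Q}[[t]]$ for all $i \leq B - 1$ by construction of $P$. Hence $\Phi$ maps the entire ideal $\langle x'-1,\, y'-y,\, P\rangle^{(B-1)}$ into $(t)\mathbb{Q}[[t]]$, which does not contain $1$.

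For the membership $1 \in \langle x'-1,\, y'-y,\, P\rangle^{(\infty)}$, I will invoke the differential Nullstellensatz cited from~\cite{Kol} to reduce to showing that no common solution exists in any differential extension of $\mathbb{Q}$. Embedding a putative solution into a differentially closed field $M$ with field of constants $C \supseteq \mathbb{Q}$, every solution takes the form $x^* = t_0 + c_1$ and $y^* = c_2 y_0$ for fixed $t_0, y_0 \in M$ with $t_0' = 1$, $y_0' = y_0 \ne 0$, and constants $c_1, c_2 \in C$. A routine minimal-polynomial argument (differentiate the supposed minimal polynomial of $y_0$ over $C(t_0)$ and compare coefficients) shows $y_0$ is transcendental over $C(t_0)$. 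Then $P(t_0 + c_1, c_2 y_0) = 0$ forces either $P \equiv 0$ (if $c_2 \ne 0$, by transcendence) or $y \mid P$ (if $c_2 = 0$), both contradicting the chosen properties of $P$.

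The technical core, and what I expect to be the main obstacle, is the lemma: \emph{for every nonzero $Q \in \mathbb{Q}[x, y]$ of degree at most $d'$, $\ord_{t=0} Q(t, e^t) \leq \binom{d'+2}{2} - 1$}. To prove it, note that $(D - b)^{a+1}$ annihilates $t^a e^{bt}$ where $D := d/dt$, so $Q(t, e^t)$ is killed by the linear differential operator $L := \prod_{b=0}^{d'} (D - b)^{d' - b + 1}$ of order exactly $\binom{d'+2}{2}$; if $Q(t, e^t)$ vanished to that order at $0$, uniqueness for homogeneous linear ODEs would force $Q(t, e^t) \equiv 0$, contradicting the linear independence of $\{t^a e^{bt}\}_{a + b \leq d'}$ (a consequence of the transcendence of $e^t$ over $\mathbb{Q}(t)$). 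Applying the lemma, any factorization $P = P_1 P_2$ with $d_i := \deg P_i \geq 1$ and $d_1 + d_2 \leq d$ would give $B \leq \binom{d_1+2}{2} + \binom{d_2+2}{2} - 2$; an elementary comparison (worst case $d_1 = 1$) contradicts this for $d \geq 2$, while for $d = 1$ irreducibility is automatic. Likewise, $y \mid P$ would force $R := P/y$ to satisfy $\deg R \leq d - 1$ and $\ord R(t, e^t) \geq B > \binom{d+1}{2} - 1$, again contradicting the lemma. Corollary~\ref{cor:1} then follows by matching $B = \binom{d+2}{2} - 1 \sim d^2/2$ against the upper bound $D^2 + 1$ from Theorem~\ref{thm:radical_elimination} for $m = 1$ (the $m = 0$ case being trivial, since both bounds equal $1$).
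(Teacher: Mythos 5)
Your argument is correct and follows essentially the same route as the paper's: the ``approximate solution'' $(t,e^t)$, a linear-algebra construction of $P$ from truncated Taylor expansions, the annihilating operator $\prod_{b=0}^{d}\left(\tfrac{d}{dt}-b\right)^{d-b+1}$ of order $\binom{d+2}{2}$ both to bound $\ord_{t=0}Q(t,e^t)$ and to force irreducibility of $P$, and a substitution (differential) homomorphism to rule out membership at order $B-1$. The one genuinely different sub-step is the proof that $1\in\langle x'-1,\,y'-y,\,P\rangle^{(\infty)}$: the paper reduces to formal power series solutions via a cited constant-coefficients result and then uses the algebraic independence of $t$ and $e^t$, whereas you pass to a differentially closed field and run a transcendence argument ($t_0$ transcendental over the constants since a nonconstant cannot be algebraic over $C$; $y_0$ transcendental over $C(t_0)$ because the minimal-polynomial computation produces a nonzero $a\in C(t_0)$ with $a'=ka$, $k\geqslant 1$, which fails by a degree count in $C[t_0]$). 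Both are valid; yours trades an external citation for a short model-theoretic setup. Two points you should make explicit in the final step, since they are the actual content of the corollary's proof in the paper: (i) Theorem~\ref{thm:radical_elimination} applies to the witness only after checking that $\langle x'-1,\,y'-y,\,P\rangle\subset k[x,x',y,y']$ is radical with $m=1$ and $D_1=\deg P$ --- this is exactly what irreducibility of $P$ gives via $k[x,x',y,y']/I\cong k[x,y]/\langle P\rangle$; (ii) your order lemma forces $\deg P=d$ exactly (otherwise $\ord_{t=0}P(t,e^t)<B$), which is needed so that the lower bound $\binom{d+2}{2}-1\sim d^2/2$ is compared against $D_1^2+1=d^2+1$ rather than against a possibly smaller degree.
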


\begin{remark_num}\label{rem:count}
  Consider $F \subset k[\bm{y}_{\bm{\beta}}]$ with $\bm{\beta} \in \mathbb{Z}_{\geqslant 1}^t$.
  One can show that\vspace{-0.05in}
  \begin{equation}\label{eq:finite}
    \langle F \rangle^{(\infty)} \text{ has finitely many solutions} \iff \langle F \rangle^{(\infty)} \cap k[\bm{y}_{\bm{1}}] \text{ has finitely many solutions},
  \end{equation}
  where $\bm{1}=(1, \ldots, 1)$. 
  Furthermore, Theorem~\ref{th:FullElimination} implies that $\langle F \rangle^{(\infty)} \cap k[\bm{y}_{\bm{1}}]$ is finite if and only if $\langle F \rangle^{(B)} \cap k[\bm{y}_{\bm{1}}]$ is finite.
  Thus, using~\eqref{eq:finite} and Theorem~\ref{th:FullElimination}, one can design the following prolongation-relaxation algorithm for counting solutions of a system $F$ of DAEs as follows:\vspace{-0.05in}
  \begin{enumerate}[leftmargin=0.5cm,itemsep=-0.05cm]
    \item Let $B$ be the bound given by Theorem~\ref{th:FullElimination} applied $F \subset k[\bm{y}_{\bm{\beta}}]$.
    \item\label{step2} Successively taking $N$ to be each integer from $1$ to $B$, we check whether $\dim \big(\langle F \rangle^{(N)} \cap k[\bm{y}_{\bm{1}}]\big) \leqslant 0$ and, if it is, stop and go to Step~\ref{step3}.
    \item\label{step3} If, for all $N$ from Step~\ref{step2}, $\dim \big(\langle F \rangle^{(N)} \cap k[\bm{y}_{\bm{1}}]\big) > 0$, return $\infty$.
    Otherwise, we return the number of common zeros  of the polynomials $\langle F \rangle^{(N)} \cap k[\bm{y}_{\bm{\beta}}]$ 
    that are also solutions of $F=0$ as a system of DAEs.
  \end{enumerate}
\end{remark_num}

\section{Examples}\label{sec:examples_numbers}
In this section, we will show
how our bounds can be used for elimination of unknowns in DAEs in practice.  
Our approach is general rather than ad hoc.
Examples~\ref{ex:LV},~\ref{ex:vanderpol}, and~\ref{ex:pendulum} are from modeling, and $m=0,1$ in all of them (cf. Corollary~\ref{cor:1}). We have constructed Example~\ref{ex:control} to show elimination for $m=2$.
All of the computational results below can be reproduced using our {\sc Maple} code at~\url{https://github.com/pogudingleb/DifferentialElimination/tree/master/examples}.
The computation takes less than~$30$ seconds on a laptop. 

\begin{example}[Lotka-Volterra model]\label{ex:LV}
Consider the classical Lotka-Volterra equations (also known as the predator-prey equations),
\begin{equation}\label{eq:LV}
  \begin{cases}
    x' = \alpha x-\beta xy,\\
    y' = \delta xy-\gamma y,
  \end{cases}
  \end{equation}
 in which $x$ and $y$ are the populations of prey and predators, respectively.
  Frequently, one of these quantities, say $y$, cannot be measured in experiments.
  Using our main result, we can determine if there are relations among the parameters $\alpha, \beta, \gamma, \delta$ and the derivatives of $x$ (the population of prey).
 Such relations can be further used to test the model against experimental data~\cite{HHM16}.
 Finding the relations is the problem of eliminating $y$. In this case, we consider~\eqref{eq:LV} in $\mathbb{Q}(\alpha,\beta,\gamma,\delta)(x_{\bm{\infty}})[y,y']$, which defines an affine variety of dimension zero ($m=0$). Therefore, the bound provided by Theorem~\ref{thm:radical_elimination} is $B=1$. The desired relation is
  \[
xx''-x'^2+x(\alpha x-x')(\delta x-\gamma)=0.
  \]
  Note that $\beta$ does not appear in this relation as it is independent of $\alpha,\gamma,\delta,x,x',\ldots$ (cf. \cite[Example~2.13]{HOPY}).
\end{example}

\begin{example}[Van der Pol oscillator]\label{ex:vanderpol}
  The system\vspace{-0.1in}
  \begin{equation}\label{eq:vanderpol}
  \begin{cases}
    y' = z,\\
    (1 - y^2) z - y = 0,
  \end{cases}
  \end{equation}
  is a limiting case of the Van der Pol oscillator~\cite[Example~1.7]{DAEKM}.
  Consider the problem of eliminating $y$.
  System~\eqref{eq:vanderpol} is a system of a linear equation in $y'$ with coefficients in $\mathbb{C}(z)$ and a quadratic equation in $y$ with nonzero discriminant and with coefficients in $\mathbb{C}(z)$.
  Thus, \eqref{eq:vanderpol} defines a zero-dimensional radical ideal in $\mathbb{C}(z)[y, y']$, and so $m = 0$.
  Hence, Theorem~\ref{thm:radical_elimination} implies that, if the elimination is possible, it is possible after one prolongation.
  After this one prolongation, one can now find the following consequence of~\eqref{eq:vanderpol} not involving $y$ using only polynomial elimination:\vspace{-0.07in}
  \[
    z'^2 - z' z - 4 z' z^3 + z^4 + 4 z^6 = 0.\vspace{-0.05in}
  \]
\end{example}


\begin{remark_num}\label{rem:prolong}
  In Examples~\ref{ex:control} and~\ref{ex:pendulum}, we use the following observation.
  If $F \subset k(\bm{y}_{\bm{\infty}})[\bm{x}_{\bm{\alpha}}]$ for some $\bm{\alpha} \in \mathbb{Z}_{\geqslant 0}^{s}$ and, for some $f \in F$, we have $f' \in k(\bm{y}_{\bm{\infty}})[\bm{x}_{\bm{\alpha}}]$, then one can consider the equivalent system $\{F, f'\}$ instead.
  For this new system, the bound given by Theorem~\ref{thm:radical_elimination} will be smaller or the same.
  This is not an ad hoc trick and can be 
used in an algorithm.
\end{remark_num}


\begin{example}[Pendulum]\label{ex:pendulum}
  In this example, we will show how our bounds can be used to  show the {\em impossibility} of elimination.
  Consider the following system from~\cite[p.~725]{GP84}:
  \begin{equation}
  \begin{cases}\label{eq:pendulum}
    x'' = Tx + F_1,\\
    y'' = -T y + 1 + F_2,\\
    x^2 + y^2 = 1
  \end{cases}
  \end{equation}
  with added external force $\mathbf{F} = (F_1, F_2)$.
  System~\eqref{eq:pendulum} describes a pendulum with unit mass, length, and gravity.
  The  unknown functions $x$ and $y$ stand for the coordinates and $T$ denotes the string tension.
  Since differentiating the equation $x^2 + y^2 = 1$ twice does not introduce 
  derivatives that do not appear in the system already, following Remark~\ref{rem:prolong}, we extend system~\eqref{eq:pendulum} by
  \begin{equation}\label{eq:pendulum_extra}
  \begin{cases}
    x x' + y y' = 0,\\
    x'^2 + x x'' + y'^2 + y y'' = 0.
  \end{cases}
  \end{equation}
  
  We will consider problems of deriving  differential equations  in  subsets of variables.
  The results are summarized in Table~\ref{table:pendulum} (in all cases, $m=0,1$). The impossibility of elimination  was established using the approach developed in Section~\ref{sec:probability} with probability at least $99\%$.
  \vspace{-0.1in}
  \begin{table}[h]
  \caption{Example~\ref{ex:pendulum}}
  \label{table:pendulum}
  \centering
  \begin{tabular}{|c|c|c|c|c|}
  \hline
    equation in & $(D_0, D_1)$ & bound from Theorem~\ref{thm:radical_elimination} & elimination possible? \\
    \hline
    $x$ & $(0, 2)$ & $5$ & No \\
    \hline
    $y$ & $(0, 2)$ & $5$ & No \\
    \hline
    $x, F_1$ & $(2, 0)$ & 1 & No \\
    \hline
    $y, F_2$ & $(2, 0)$ & $1$ & No \\
    \hline
  \end{tabular}
  \end{table}
  
\end{example}

\begin{example}\label{ex:control} We will now present an example that illustrates that the main result can also be used in $m = 2$ in practice whether or not elimination is possible.
Consider\vspace{-0.1in}
\begin{equation}\label{eq:control}
  \begin{cases}
    x_1' = x_2 + u_1,\\
    x_2' = x_1 + u_2,\\
    x_3' = x_3(u_1 + u_2' - x_3).
  \end{cases}
\end{equation}
One can think of $u_1$ and $u_2$ as control variables whose values can be prescribed in order to achieve a certain behavior for $x_1, x_2, x_3$.
If there is a consequence of~\eqref{eq:control} involving only two of $x_1, x_2, x_3$, say $x_1$ and $x_2$, this would be a natural restriction on the trajectories on the $(x_1, x_2)$-plane that can be achieved.

We consider all three possible pairs of variables to keep $(x_1, x_2)$, $(x_2, x_3)$, and $(x_1, x_3)$.
For the case $(x_1, x_2)$, we additionally observe that one can add the derivative of the second equation from~\eqref{eq:control}\vspace{-0.05in}
\[
  x_2'' = x_1' + u_2'\vspace{-0.05in}
\]
without changing $\bm{\alpha}$ in the application of Theorem~\ref{thm:radical_elimination} (see Remark~\ref{rem:prolong}).

The results are summarized in Table~\ref{table:control} (in all cases, $m=1,2$).
An equation only in $x_2$ and $x_3$ can be found using polynomial elimination  after one prolongation.
The impossibility of elimination in the cases $(x_1, x_2)$ and $(x_1, x_3)$ was established using the approach developed in Section~\ref{sec:probability} with probability at least $99\%$.

\smallskip

\begin{table}[h]
\centering
\caption{Example~\ref{ex:control}}\label{table:control}
\begin{tabular}{|c|c|c|c|}
  \hline
  equation in & $(D_0, D_1, D_2)$ & bound from Theorem~\ref{thm:radical_elimination} & elimination possible? \\ 
  \hline
  $x_1, x_2$ & $(0, 2, 0)$ & $5$ & No \\
  \hline
  $x_1, x_3$ & $(0, 0, 1)$ & $3$ & No \\
  \hline
  $x_2, x_3$ & $(0, 0, 1)$ & $3$ & Yes\\
  \hline
\end{tabular}
\end{table}

\end{example}

\vspace{-0.25in}
\section{Proofs}
 The proofs are structured as follows. We first show, in Section~\ref{sec:triangular_differentiate}, a new method that allows to build a dimension reduction procedure in such a way that the degree of the newly added equation is bounded by the degree of the ideal. In Section~\ref{sec:multdiff}, we establish a relation between differentiation and intersection of ideals, as well as gather results on the Noether exponent we will use later. Using these methods and results, the proof of the bound is finished in Section~\ref{sec:proofs} along the following lines:\vspace{-0.05in}
\begin{itemize}[leftmargin=0.5cm,itemsep=-0.05cm]
\item we obtain a bound for the radical differential ideal membership problem for prime, radical equidimensional radical, and arbitrary polynomial ideals of the equations of the system to prove Proposition~\ref{prop:any};

  \item From Proposition~\ref{prop:any}, we deduce a bound for the elimination problem given in Theorem~\ref{thm:radical_elimination}.
  By estimating the geometric data in terms of the combinatorial data, we deduce bounds for the elimination problem given in Theorems~\ref{th:elimination_variables} and~\ref{th:FullElimination} from Proposition~\ref{prop:any}. 
  
\end{itemize}\vspace{-0.05in}
Our proof of a new lower bound is given in Section~\ref{sec:lowerbound}. 

\hypertarget{variety}{}
For a field $k$, let $\overline{k}$ denote the algebraic closure of $k$.
For $S\subset k[\bm{x}]$, the set of $\overline{k}$-points of the affine variety of $S$ is denoted by $V(S)$.

\vspace{-0.05in}
\subsection{Dimension reduction}\label{sec:triangular_differentiate}
In this section, we will show that, if the intersection with a polynomial subring of $k[\bm{x}_{\infty}]$ of the form $k[\bm{x}_{\bm{\alpha}}]$ and differentiation do not preserve a prime polynomial  ideal, then this is witnessed by a polynomial of degree at most the degree of the ideal (see Lemma~\ref{lem:one_polynomial}). This will be one of the keys in our inductive argument to prove the main result.

\vspace{-0.08in}
\subsubsection{General dimension reduction}
Let $\bm{x} = (x_1, \ldots, x_n)$ and $\bm{1} = (1, \ldots, 1) \in \mathbb{Z}^n$. We will use the following result, which is similar to \cite[Lemma~3.1]{Gustavson}:

\begin{lemma}\label{lem:preprolongation}
   For every $\alpha \geqslant 1$ and  prime ideal $I \subset k[\hyperlink{diff_poly}{\bm{x}_{\alpha \cdot \bm{1}}}]$, \vspace{-0.05in}
    \begin{equation}\label{eq:diff_condition}
    \langle I \cap k[\bm{x}_{(\alpha - 1) \bm{1}}]\rangle^{ \hyperlink{def_ideals}{(1)}} \subset I \implies I = \sqrt{  I^{\hyperlink{def_ideals}{(\infty)}} } \cap k[\bm{x}_{\alpha \cdot \bm{1}}].\vspace{-0.05in}
    \end{equation}
\end{lemma}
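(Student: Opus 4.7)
The inclusion $I \subset \sqrt{I^{(\infty)}} \cap k[\bm{x}_{\alpha\cdot\bm{1}}]$ is trivial, so the plan is to prove the reverse by constructing a differential field extension $(L, D)$ of $k$ together with a differential $k$-algebra homomorphism $\phi\colon k[\bm{x}_{\bm{\infty}}] \to L$ whose kernel meets $k[\bm{x}_{\alpha\cdot\bm{1}}]$ in exactly $I$. Once such a $\phi$ is in hand, $\ker\phi$ is a prime (hence radical) differential ideal containing $I$ and therefore also $\sqrt{I^{(\infty)}}$; intersecting with $k[\bm{x}_{\alpha\cdot\bm{1}}]$ then yields $\sqrt{I^{(\infty)}} \cap k[\bm{x}_{\alpha\cdot\bm{1}}] \subset I$, as required.

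For the construction, I would take $B = k[\bm{x}_{(\alpha-1)\bm{1}}]$, let $J = I \cap B$ (a prime of $B$), and set $R = k[\bm{x}_{\alpha\cdot\bm{1}}]/I$, $L = \Quot R$. The formal shift $\delta\colon B \to k[\bm{x}_{\alpha\cdot\bm{1}}]$ sending $x_i^{(j)} \mapsto x_i^{(j+1)}$ for $j \leqslant \alpha - 2$ and extended by the Leibniz rule is a derivation of $B$ with values in the larger ring, and the hypothesis of the lemma reads precisely as $\delta(J) \subset I$. Composing with the quotient map $k[\bm{x}_{\alpha\cdot\bm{1}}] \to R$ then produces a well-defined derivation $\bar\delta\colon B/J \to R \hookrightarrow L$, which I would extend first to $\Quot(B/J) \to L$ by the quotient rule and then to a derivation $D\colon L \to L$ by using that $\mathrm{char}\, k = 0$ makes every algebraic extension separable (so transcendental generators can be sent anywhere and algebraic relations impose unique values). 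The map $\phi$ is then defined by $\phi(x_i^{(j)}) := D^j(\overline{x_i})$; a short induction using $D(\overline{x_i^{(j)}}) = \overline{x_i^{(j+1)}}$ for $j \leqslant \alpha - 2$ shows that $\phi$ restricts to the quotient map $k[\bm{x}_{\alpha\cdot\bm{1}}] \to R$, so indeed $\ker\phi \cap k[\bm{x}_{\alpha\cdot\bm{1}}] = I$. Checking $\phi \circ \partial = D \circ \phi$ on generators makes $\phi$ differential and so $I^{(\infty)} \subset \ker\phi$.

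The main obstacle is the descent step that produces $\bar\delta\colon B/J \to R$: this is the unique place where the hypothesis $\langle I \cap k[\bm{x}_{(\alpha-1)\bm{1}}]\rangle^{(1)} \subset I$ is consumed, and without it $\delta$ simply fails to factor through the quotient. The subsequent extensions to $\Quot(B/J)$ and then to $L$ are routine in characteristic zero, but conceptually this is the step that repackages the finitary ideal-theoretic data into a single differential-field model simultaneously detecting every element of $\sqrt{I^{(\infty)}}$.
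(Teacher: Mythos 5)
Your argument is correct and is essentially the paper's own proof: both construct a differential field extension from $\Quot\bigl(k[\bm{x}_{\alpha\cdot\bm{1}}]/I\bigr)$ using the hypothesis to descend the shift derivation to the quotient, then pull back along the resulting differential homomorphism $k[\bm{x}_{\bm{\infty}}]\to L$ whose kernel meets $k[\bm{x}_{\alpha\cdot\bm{1}}]$ in $I$. The only difference is that the paper delegates the extension of the derivation to the whole fraction field to a cited result of Pierce (the ``differential condition''), whereas you carry out that routine characteristic-zero extension by hand.
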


\begin{proof}
    Let $\pi$ be the canonical homomorphism $\pi\colon k[\bm{x}_{\alpha \cdot \bm{1}}] \to B$, where
    \vspace{-0.05in}\[ 
    B = k[\bm{x}_{\alpha \cdot \bm{1}}] / I \supset A = k[\bm{x}_{(\alpha - 1) \bm{1}}] \big/ (I \cap k[\bm{x}_{(\alpha - 1) \bm{1}}]).\vspace{-0.05in}
    \] 
    We claim that the field of fractions $Q(B)$ of $B$ satisfies the differential condition (see~\cite[p. 1146]{Gustavson}).
    It is sufficient to show that, for every $f \in k[\bm{x}_{(\alpha - 1) \bm{1}}]$ such that 
    $\pi(f) = 0$,
    for the polynomial $g = f^{\prime} \in k[\bm{x}_{\alpha \cdot \bm{1}}]$, the equality
   $\pi(g) = 0$
    holds.
     $\pi(f) = 0$
    implies that $f \in I \cap k[\bm{x}_{(\alpha - 1) \bm{1}}]$, so\vspace{-0.05in} 
    \[
    g \in (I \cap k[\bm{x}_{(\alpha - 1) \bm{1}}])^{(1)} \subset I.\vspace{-0.05in}
    \]
    Hence, $\pi(g) = 0$.
   Thus, by \cite[Theorem 4.10]{Pierce}, there exists an extension $K \supset Q(B)$, where $K$ is a differential field, and the differential structure on $K$ is compatible with that of $Q(A)\subset Q(B)$.
    Consider the differential homomorphism $\varphi\colon k[\bm{x}_{\bm{\infty}}] \to K$ defined by  
    $\varphi(x_i) = \pi(x_i)$,
    $1 \leqslant i \leqslant n$.
    Then, $\Ker\varphi \cap k[\bm{x}_{\alpha \cdot \bm{1}}] = I$, so \vspace{-0.05in}
    \[
    \sqrt{ I^{(\infty)} } \cap k[\bm{x}_{\alpha \cdot \bm{1}}] \subset \Ker\varphi \cap k[\bm{x}_{\alpha \cdot \bm{1}}] = I.\vspace{-0.07in}
    \]
    The inverse inclusion is immediate.
\end{proof}


\begin{lemma}\label{lem:prolongation} For every  tuple $\bm{\alpha} \in \mathbb{Z}_{\geqslant 1}^n$ and prime ideal $I \subset k[\hyperlink{diff_poly}{\bm{x}_{\bm{\alpha}}}]$, \vspace{-0.05in}
\[
\langle I \cap k[\bm{x}_{\bm{\alpha} - \bm{1}}]\rangle^{\hyperlink{def_ideals}{(1)}} \subset I \implies I = \sqrt{  I^{ \hyperlink{def_ideals}{(\infty)} } } \cap k[\bm{x}_{\bm{\alpha}}].\vspace{-0.05in}
\]
\end{lemma}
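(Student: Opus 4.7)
The plan is to adapt the proof of Lemma~\ref{lem:preprolongation} to the non-uniform tuple setting, since the structural ingredients transfer with only minor bookkeeping. First, I would form the domains $B = k[\bm{x}_{\bm{\alpha}}]/I$ and $A = k[\bm{x}_{\bm{\alpha} - \bm{1}}]\big/\big(I \cap k[\bm{x}_{\bm{\alpha} - \bm{1}}]\big)$, so that the natural map $A \hookrightarrow B$ is injective; write $a_i^{(j)}$ for the image of $x_i^{(j)}$. Both are integral domains because $I$ is prime, and intersection of a prime ideal with a subring remains prime.

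Next, I would verify the differential condition on $Q(A) \subset Q(B)$ (in the sense of \cite[p.~1146]{Gustavson}): for every $f \in k[\bm{x}_{\bm{\alpha} - \bm{1}}]$ with $f|_{x_i^{(j)} = a_i^{(j)}} = 0$, the polynomial $g := f' \in k[\bm{x}_{\bm{\infty}}]$ satisfies $g|_{x_i^{(j)} = a_i^{(j)}} = 0$. Here $f$ uses only variables $x_i^{(j)}$ with $j \leqslant \alpha_i - 2$, so $g$ uses only $x_i^{(j)}$ with $j \leqslant \alpha_i - 1$, and hence lies in $k[\bm{x}_{\bm{\alpha}}]$, making the evaluation well-posed. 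The vanishing of $f$ places it in $I \cap k[\bm{x}_{\bm{\alpha} - \bm{1}}]$; the hypothesis then gives $g \in I$, which yields the desired vanishing.

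Then, by \cite[Theorem~4.10]{Pierce} (exactly as in the proof of Lemma~\ref{lem:preprolongation}), I obtain a differential field $K \supset Q(B)$ whose derivation restricts to the derivation $Q(A) \to Q(B)$ determined by $a_i^{(j)} \mapsto a_i^{(j+1)}$. The differential homomorphism $\varphi\colon k[\bm{x}_{\bm{\infty}}] \to K$ defined by $\varphi(x_i) = a_i^{(0)}$ then satisfies $\Ker \varphi \cap k[\bm{x}_{\bm{\alpha}}] = I$, since the map $k[\bm{x}_{\bm{\alpha}}] \to B \hookrightarrow K$ coincides with $\varphi$ restricted to $k[\bm{x}_{\bm{\alpha}}]$. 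As $\Ker\varphi$ is a prime (hence radical) differential ideal containing $I$, it contains $\sqrt{I^{(\infty)}}$; intersecting with $k[\bm{x}_{\bm{\alpha}}]$ yields $\sqrt{I^{(\infty)}} \cap k[\bm{x}_{\bm{\alpha}}] \subset I$, and the reverse inclusion is immediate. The only substantive difference from the uniform case of Lemma~\ref{lem:preprolongation} is that the derivation now shifts each generator $a_i^{(j)}$ with $j \leqslant \alpha_i - 2$ to $a_i^{(\alpha_i - 1)} \in B$ along its own axis, rather than the entire level $\alpha - 1$ moving to level $\alpha$ simultaneously; this is inconsequential because Pierce's extension theorem requires only that the polynomial relations among the generators of $A$ be compatible with the prescribed derivation into $B$, which is precisely what the hypothesis $\langle I \cap k[\bm{x}_{\bm{\alpha} - \bm{1}}]\rangle^{(1)} \subset I$ encodes.
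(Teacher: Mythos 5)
Your proof is correct, but it takes a different route from the paper's. The paper proves Lemma~\ref{lem:prolongation} by \emph{reducing} it to Lemma~\ref{lem:preprolongation}: it introduces fresh variables $\bm{y}$, sets $\delta_i = \alpha - \alpha_i$ with $\alpha = \max_i \alpha_i$, and uses the differential substitution $x_i \mapsto y_i^{(\delta_i)}$ to turn the non-uniform truncation $k[\bm{x}_{\bm{\alpha}}]$ into a uniform one $k[\bm{y}_{\alpha\cdot\bm{1}}]$, to which the already-proved lemma applies; the conclusion is then pulled back along the substitution. You instead rerun the proof of Lemma~\ref{lem:preprolongation} directly in the non-uniform setting. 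This works: the differential condition of \cite{Gustavson} and the extension theorem \cite[Theorem~4.10]{Pierce} are statements about a tuple of field elements together with a tuple of prospective derivatives, so they are indifferent to whether the index set is a box $\alpha\cdot\bm{1}$ or a general $\bm{\alpha}$; the only thing to verify is that the assignment $a_i^{(j)} \mapsto a_i^{(j+1)}$ ($j \leqslant \alpha_i - 2$) is a well-defined derivation $Q(A)\to Q(B)$ over $k$, and your observation that $f \in k[\bm{x}_{\bm{\alpha}-\bm{1}}]$ forces $f' \in k[\bm{x}_{\bm{\alpha}}]$, combined with the hypothesis $\langle I \cap k[\bm{x}_{\bm{\alpha}-\bm{1}}]\rangle^{(1)} \subset I$, gives exactly that. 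The trade-off: your argument is shorter and avoids the bookkeeping of the shift homomorphism, but it asks the reader to re-check that the quoted external results apply verbatim in the non-uniform form; the paper's reduction keeps those results invoked only in the uniform shape in which Lemma~\ref{lem:preprolongation} was established, at the cost of an extra change of variables. Both are sound.
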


\begin{proof}
	Let $\bm{\alpha} = (\alpha_1, \ldots, \alpha_n)$ and $\alpha = \max(\alpha_1, \ldots, \alpha_n)$, and, for every $i$, set $\delta_i = \alpha - \alpha_i$.
    We introduce new variables $\bm{y} = (y_1, \ldots, y_n)$.
    Let $\varphi\colon k[\bm{x}_{\bm{\infty}}] \to k[\bm{y}_{\bm{\infty}}]$ be the differential homomorphism  defined by $\varphi(x_i) = y_i^{(\delta_i)}$ for all $i$.
    Then $J = k[\bm{y}_{\alpha \cdot \bm{1}}]\cdot\varphi(I)$ is a prime ideal in $k[\bm{y}_{\alpha \cdot \bm{1}}]$.
    Since 
    \[
    k[\bm{y}_{\alpha \cdot \bm{1}}]\cdot\varphi\big( \langle I \cap k[\bm{x}_{\bm{\alpha} - \bm{1}}]\rangle^{(1)} \big) = k[\bm{y}_{\alpha \cdot \bm{1}}] \cdot \langle J \cap k[\bm{y}_{(\alpha - 1) \bm{1}}]\rangle^{(1)},
    \]
    we obtain that $(J \cap k[\bm{y}_{(\alpha - 1) \bm{1}}])^{(1)} \subset J$.	
    Lemma~\ref{lem:preprolongation} implies that $J = \sqrt{  J^{(\infty)} } \cap k[\bm{y}_{\alpha \cdot \bm{1}}]$.
    Then \vspace{-0.05in}\[k[\bm{y}_{\bm{\infty}}] \cdot\varphi\big( \sqrt{  I^{(\infty)} } \big) = \sqrt{  J^{(\infty)} }\implies I = \sqrt{ I^{(\infty)} } \cap k[\bm{x}_{\bm{\alpha}}].\qedhere\]
\end{proof}


\subsubsection{Finding an equation of degree at most the degree of the ideal to lower the dimension}

For a non-negative integer $D$ and an ideal $J \subset k[z_1, \ldots, z_N]$, let 
$J_D = \langle f\in J\:|\: \deg f \leqslant D\rangle$.

\begin{lemma}\label{lem:nonsingular}
For every non-negative integer $D$ and prime ideal $J \subset k[z_1, \ldots, z_N]$  of degree $D$,
  there is a nonempty open subset $U \subset \hyperlink{variety}{V}(J)$ such that, for every $p \in U$,\vspace{-0.05in}
    \[
    J_{\mathfrak{m}} = {(J_D)}_{\mathfrak{m}}, \text{ where } \mathfrak{m} = I(p).\vspace{-0.05in}
    \]
\end{lemma}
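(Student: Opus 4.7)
The plan is to construct, after a generic linear change of coordinates in $k[z_1,\ldots,z_N]$, polynomials $P_{m+1},\ldots,P_N\in J_D$ (where $m=\dim J$) that generate $J$ locally at a generic point of $V(J)$. Since $J_D$ is invariant under an invertible linear substitution of the $z_i$, I may assume after such a change that $z_1,\ldots,z_m$ form a Noether normalization of $k[z_1,\ldots,z_N]/J$; the projection $\pi\colon V(J)\to\mathbb{A}^m$ onto these coordinates is then finite and surjective, and for a generic choice its degree equals $D$.

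For each $j\in\{m+1,\ldots,N\}$, I would consider the projection $\pi_j\colon V(J)\to\mathbb{A}^{m+1}$ onto $(z_1,\ldots,z_m,z_j)$. Its image $W_j$ is an irreducible hypersurface, defined by an irreducible polynomial $P_j\in k[z_1,\ldots,z_m,z_j]\cap J$. Factoring $\pi$ through $\pi_j$ as a composition of finite surjective maps and using the multiplicativity of degree in such a composition gives $\deg W_j\leqslant D$. Since the total degree of $P_j$ equals $\deg W_j$, we conclude $P_j\in J_D$. Establishing this degree bound cleanly is the main technical point of the argument and relies on the interpretation of the degree of an irreducible affine variety via generic linear projection.

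To conclude, I would take $U\subset V(J)$ to be the smooth locus of $V(J)$ intersected with the nonvanishing loci of $\partial P_j/\partial z_j$ for $j>m$; each is a nonempty open subset, since $P_j$ is irreducible in characteristic zero and so $\partial P_j/\partial z_j$ does not lie in the principal ideal $\langle P_j\rangle=J\cap k[z_1,\ldots,z_m,z_j]$ and hence does not vanish identically on $V(J)$. Fix $p\in U$ and set $\mathfrak{m}=I(p)$, $I'=\langle P_{m+1},\ldots,P_N\rangle\subseteq J_D$. Since each $P_j$ involves only $z_j$ among $z_{m+1},\ldots,z_N$, the $(N-m)\times(N-m)$ submatrix $(\partial P_j/\partial z_k)_{j,k>m}$ of the Jacobian at $p$ is diagonal with nonzero diagonal, so the full Jacobian of $P_{m+1},\ldots,P_N$ has rank $N-m$ at $p$. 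The Jacobian criterion then yields that $I'_{\mathfrak{m}}$ is a prime ideal of height $N-m$; since $I'_{\mathfrak{m}}\subseteq J_{\mathfrak{m}}$ with both primes of the same height, the two coincide, giving $J_{\mathfrak{m}}=(J_D)_{\mathfrak{m}}$ as required.
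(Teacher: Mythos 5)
Your proposal is correct and follows essentially the same route as the paper: choose a transcendence basis (you get it via a Noether normalization after a linear change of coordinates, the paper simply renumbers variables), take for each remaining variable $z_j$ the minimal-degree relation $P_j(z_1,\ldots,z_m,z_j)\in J$, bound $\deg P_j$ by $D$ via the fact that linear projections do not increase degree, and define $U$ by the nonvanishing of the $\partial P_j/\partial z_j$. The only cosmetic difference is the last step, where you invoke the Jacobian criterion plus a height comparison while the paper observes that $P_{m+1},\ldots,P_N$ form a system of local parameters and cites Shafarevich; these are the same argument.
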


\begin{proof}
  Without loss of generality, we can assume that $z_1, \ldots, z_d$ form a transcendence basis of $k[z_1, \ldots, z_N]$ modulo $J$.
  For every $i$, $d + 1 \leqslant i \leqslant N$, we consider $P_i(z_1, \ldots, z_d, z_i)$, a non-zero algebraic relation among $z_1, \ldots, z_d, z_i$ modulo $J$ of the smallest degree. 
  Since $P_i$ is a defining equation of the Zariski closure of the projection of $V(J)$ to the $(z_1, \ldots, z_d, z_i)$-coordinates, for every $i$, $d + 1 \leqslant i \leqslant N$, $\deg P_i\leqslant D$.
  Let \vspace{-0.05in}
  \[
  P := \frac{\partial P_{d + 1}}{\partial z_{d + 1}} \cdot \ldots \cdot \frac{\partial P_{N}}{\partial z_{N}}.\vspace{-0.05in}
  \]
  Let $U := V(J) \setminus V(P)$.
  Since $P_{d + 1}, \ldots, P_N$ are squarefree, $P$ does not vanish everywhere on $Z(J)$, so $U\ne\varnothing$.
  
  Let $p \in U$ and $\mathfrak{m} := I(p)$.
  The inclusion $J_D \subset J$ implies ${(J_D)}_{\mathfrak{m}} \subset J_{\mathfrak{m}}$.
  On the other hand, since $P$ is the determinant of the Jacobian of $P_{d + 1}, \ldots, P_N$ with respect to $z_{d + 1}, \ldots, z_N$ and $P(p) \neq 0$, the polynomials $P_{d + 1}, \ldots, P_N$ form a system of local parameters of $V(J)$ at $p$.
  Then $P_{d + 1}, \ldots, P_N$ generate $J_{\mathfrak{m}}$ by~\cite[Theorem~2.5, p.~99]{Shafarevich}.
\end{proof}


\begin{lemma} \label{lem:one_polynomial}
  For every tuple $\bm{\alpha} \in \mathbb{Z}^n_{\geqslant 1}$, if $I \subset k[\hyperlink{diff_poly}{\bm{x}_{\bm{\alpha}}}]$ is a prime ideal such that
 $\langle I \cap k[\bm{x}_{\bm{\alpha} - \bm{1}}]\rangle^{\hyperlink{def_ideals}{(1)}} \not\subset I$, 
  then there exists $g \in \langle I \cap k[\bm{x}_{\bm{\alpha}- \bm{1}}]\rangle^{(1)}$ such that \vspace{-0.08in}
  \[\dim \langle I,g\rangle < \dim I\quad
  \text{and}\quad \deg g \leqslant \deg I.\vspace{-0.05in}\]
\end{lemma}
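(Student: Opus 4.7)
The plan is to apply Lemma~\ref{lem:nonsingular} to the contracted prime ideal $J := I \cap k[\bm{x}_{\bm{\alpha}-\bm{1}}]$ and then show that some derivative of a low-degree local generator of $J$ necessarily escapes $I$. Since $I$ is prime, so is $J$; moreover, $V(J)$ is the Zariski closure of the image of $V(I)$ under the coordinate projection $\pi\colon \mathbb{A}^{|\bm{\alpha}|}\to\mathbb{A}^{|\bm{\alpha}|-n}$ that forgets the top derivatives $x_i^{(\alpha_i-1)}$. The standard fact that a linear projection does not increase the degree of an irreducible variety (see~\cite{Heintz}) gives $D_J := \deg J \leqslant D := \deg I$. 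Applying Lemma~\ref{lem:nonsingular} to $J$ produces a finite set of generators $P_1, \ldots, P_r$ of $J_{D_J}$, each of degree at most $D_J$, together with a nonempty open $U \subset V(J)$ such that $P_1, \ldots, P_r$ generate $J_{\mathfrak{m}_p}$ for every $p \in U$.

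The crux of the argument will be to show that $P_i' \notin I$ for some $i$. Suppose for contradiction that $P_i' \in I$ for all $i$. By the hypothesis of the lemma, pick $a \in J$ with $a' \notin I$, and fix any $p \in U$. Local generation at $p$ produces $h_1, \ldots, h_r, s \in k[\bm{x}_{\bm{\alpha}-\bm{1}}]$ with $s(p) \neq 0$ satisfying $s\,a = \sum_i h_i P_i$. Differentiating and rearranging,
\[
s\,a' = -s'\,a + \sum_i h_i'\,P_i + \sum_i h_i\,P_i'.
\]
Every term on the right lies in $I$ (using $a \in J \subset I$, each $P_i \in J \subset I$, and the standing assumption $P_i' \in I$), so $s\,a' \in I$. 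Because $s(p) \neq 0$ and $p \in V(J)$, one has $s \notin J$, and since $s \in k[\bm{x}_{\bm{\alpha}-\bm{1}}]$ this forces $s \notin I$; primality of $I$ then gives $a' \in I$, contradicting our choice of $a$.

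Picking any $i$ with $P_i' \notin I$ and setting $g := P_i'$ completes the proof: $g \in \langle J \rangle^{(1)}$, $\deg g \leqslant \deg P_i \leqslant D_J \leqslant \deg I$, and since $I$ is prime with $g \notin I$, Krull's principal ideal theorem gives $\dim \langle I, g\rangle = \dim I - 1 < \dim I$. The main obstacle is the sharpness of the degree bound: the hypothesis provides only some $a \in J$ with $a' \notin I$ of \emph{a priori} unbounded degree, and one must descend to a low-degree witness. Lemma~\ref{lem:nonsingular}, combined with the primality of $I$ (which renders $s$ a nonzero-divisor modulo $I$), is precisely what enables this descent.
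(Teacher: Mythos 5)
Your proof is correct and takes essentially the same route as the paper's: apply Lemma~\ref{lem:nonsingular} to the contracted prime ideal $I\cap k[\bm{x}_{\bm{\alpha}-\bm{1}}]$ (whose degree is bounded by $\deg I$ via Heintz), express a witness $a$ with $a'\notin I$ locally in terms of the degree-$\leqslant D$ generators, differentiate the cleared-denominator identity, and conclude that the derivative of some low-degree generator escapes $I$. The only cosmetic difference is that the paper finds the non-membership by evaluating at a point of $V(I)$ chosen outside $V(f')$, whereas you derive a contradiction algebraically from $s\notin I$ and the primality of $I$; both are sound (and your claim of exact equality $\dim\langle I,g\rangle=\dim I-1$ is slightly stronger than needed and could fail if $\langle I,g\rangle$ is the unit ideal, but the required strict inequality holds in any case).
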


\begin{proof}
  Let $D := \deg I$.
  The inclusion $k[\bm{x}_{\bm{\alpha} - \bm{1}}] \subset k[\bm{x}_{\bm{\alpha}}]$ corresponds to a projection $\pi$.
  Let $X := V(I)$ and $X_0 := \overline{\pi(X)}$.
 \cite[Lemma~2]{Heintz} implies that $\deg X_0 \leqslant \deg X$.
  Consider any $f \in I \cap k[\bm{x}_{\bm{\alpha} - \bm{1}}]$ such that $f' \notin I$.
  Then $X \setminus V(f')$ is a nonempty open subset of $X$.
  Applying Lemma~\ref{lem:nonsingular} to the prime ideal $I \cap k[\bm{x}_{\bm{\alpha} - \bm{1}}]$, we obtain a nonempty subset $U \subset X_0$.
  Let \[p\in\big(\pi^{-1}(U) \cap V(I)\big) \cap \big(V(I) \setminus V(f')\big).\]
  Lemma~\ref{lem:nonsingular} implies that there are polynomials $g_1, \ldots, g_M \in I \cap k[\bm{x}_{\bm{\alpha} - \bm{1}}]$ of degree at most $D$ and $a_1, \ldots, a_M, b_1, \ldots, b_M \in k[\bm{x}_{\bm{\alpha} - \bm{1}}]$ such that\vspace{-0.08in} 
  \[
  f = \frac{a_1}{b_1}g_1 + \ldots + \frac{a_M}{b_M} g_M,\vspace{-0.06in}
  \]
   and, for all $i$, $1 \leqslant i \leqslant M$, $b_i (\pi(p)) \neq 0$.
  We clear the denominators and obtain
  \[
  b_1\cdot \ldots\cdot b_M\cdot f = c_1\cdot g_1 + \ldots + c_M\cdot g_M
  \]
  for suitable $c_1, \ldots, c_M \in k[\bm{x}_{\bm{\alpha} - \bm{1}}]$.
  We differentiate this equality and obtain
  \[
  (b_1\cdot\ldots\cdot b_M)'\cdot f + (b_1\cdot\ldots\cdot b_M)\cdot f' = \big(c_1'\cdot g_1 + \ldots + c_M'\cdot g_M\big) + \big(c_1\cdot g_1' + \ldots + c_M\cdot g_M'\big).
  \]
  Since $f, g_1, \ldots, g_M$ vanish at $p$, and $b_1\cdot\ldots\cdot b_M \cdot f'$ does not vanish at $p$, at least one of $g_1', \ldots, g_M'$, say $g_1'$, does not vanish at $p$.
  Thus, we can set $g := g'_1$.
\end{proof}


\subsection{Multiplicity and differentiation}\label{sec:multdiff}

\subsubsection{Noether exponent}

For a field $k$, $\bar{k}$  will denote its algebraic closure.

\begin{definition}\hypertarget{noether_exp}{}
  Let $I$ be an ideal in a commutative ring.
  The smallest positive integer $\mu$ (if it exists) such that ${(\sqrt{I})}^\mu \subset I$ is called \emph{the Noether exponent} of $I$.
 The Noether exponent is well-defined for any ideal in a Noetherian ring.
\end{definition}

\begin{lemma}\label{lem:radical_extension}
  Let $I$ be an ideal in a $k$-algebra $A$.
  Then\vspace{-0.05in}
  \[
  \bar{k} \otimes_k \sqrt{I} = \sqrt{\bar{k} \otimes_k I}.\vspace{-0.05in}
  \]
\end{lemma}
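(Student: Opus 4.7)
The plan is to prove the two inclusions separately, treating both sides as ideals of $\bar{k}\otimes_k A$ via the flatness (indeed, freeness) of $\bar{k}$ over $k$. The inclusion $\bar{k}\otimes_k\sqrt{I}\subset\sqrt{\bar{k}\otimes_k I}$ is the easy direction: for $a\in\sqrt{I}$ with $a^n\in I$, we have $(1\otimes a)^n=1\otimes a^n\in\bar{k}\otimes_k I$, so $1\otimes a$ lies in $\sqrt{\bar{k}\otimes_k I}$, and the inclusion extends to all of $\bar{k}\otimes_k\sqrt{I}$ by $\bar{k}$-linearity.

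For the nontrivial inclusion, my plan is to show that $\bar{k}\otimes_k\sqrt{I}$ is itself a radical ideal of $\bar{k}\otimes_k A$. Flatness of $\bar{k}/k$ turns the exact sequence $0\to\sqrt{I}\to A\to A/\sqrt{I}\to 0$ into the exact sequence
\[
0\to\bar{k}\otimes_k\sqrt{I}\to\bar{k}\otimes_k A\to\bar{k}\otimes_k (A/\sqrt{I})\to 0,
\]
exhibiting $\bar{k}\otimes_k\sqrt{I}$ as the kernel of a surjection onto $\bar{k}\otimes_k(A/\sqrt{I})$. If the target is reduced, then the kernel is a radical ideal of $\bar{k}\otimes_k A$ containing $\bar{k}\otimes_k I$, hence also the smallest such ideal $\sqrt{\bar{k}\otimes_k I}$, yielding the desired inclusion.

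The main obstacle is thus the classical fact that, for any reduced $k$-algebra $B$ (applied with $B=A/\sqrt{I}$), the tensor product $\bar{k}\otimes_k B$ is reduced. I would handle this by writing $\bar{k}$ as the filtered colimit of its finite subextensions $L/k$, using that $\otimes$ commutes with filtered colimits and that a filtered colimit of reduced rings is reduced; this reduces the claim to the case of finite $L/k$. Since the paper's global assumption (Section~\ref{subsec:prelim_diff_alg}) forces characteristic zero, $L/k$ is automatically separable, so the primitive element theorem yields $L=k[t]/(f)$ with $f$ squarefree. Embedding the reduced ring $B$ into the product $\prod_{\mathfrak{p}} Q(B/\mathfrak{p})$ over the minimal primes $\mathfrak{p}$ of $B$ (possible because the nilradical, equal to the intersection of all primes, vanishes in a reduced ring) and using flatness of $L/k$, one is reduced to showing that $L\otimes_k F=F[t]/(f)$ is reduced for every field extension $F\supset k$, and this holds because $f$ remains squarefree over~$F$.
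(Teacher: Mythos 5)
Your argument is correct and follows essentially the same route as the paper: both establish the easy inclusion and then show $\bar{k}\otimes_k\sqrt{I}$ is radical by identifying $(\bar{k}\otimes_k A)/(\bar{k}\otimes_k\sqrt{I})$ with $\bar{k}\otimes_k(A/\sqrt{I})$ and invoking reducedness of this base change. The only difference is that the paper cites Bourbaki's theorem on separability of reduced algebras in characteristic zero for that last step, whereas you supply a standard self-contained proof via filtered colimits, the primitive element theorem, and the embedding into a product of fields; both are fine.
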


\begin{proof}
  Let $I_{\alg} := \bar{k} \otimes_k I$ and $J := \bar{k} \otimes_k \sqrt{I}$.
  Then $J \subset \sqrt{I_{\alg}}$.
  Since $A_{\alg} / J \cong \bar{k} \otimes_k (A / \sqrt{I})$ and $A / \sqrt{I}$ is separable due to~\cite[Chapter~V, \S 15, p.~A.V.122, Theorem~1]{Bourbaki:Algebra2}, $A_{\alg} / J$ is reduced, so $J$ is a radical ideal.
  Let $a \in \sqrt{I_{\alg}}$, then there exists $N$ such that $a^N \subset I_{\alg} \subset J$.
  Since $J$ is radical, we have  $a \in J$, and so $J = \sqrt{I_{\alg}}$.
\end{proof}

\begin{corollary}\label{cor:noether_extension}
  Let $I$ be an ideal in a $k$-algebra $A$ with  \hyperlink{noether_exp}{Noether exponent} $\mu$ and $I_{\alg} := \bar{k} \otimes_k I$.
  Then the Noether exponent of $I_{\alg}$ is   at most $\mu$.
\end{corollary}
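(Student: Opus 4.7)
The plan is to reduce the corollary directly to Lemma~\ref{lem:radical_extension} together with the fact that extension of ideals along the base change $A \to A_{\alg} := \bar{k} \otimes_k A$ commutes with taking products (equivalently, powers). Write $J := \sqrt{I}$ and let $\mu$ be the Noether exponent of $I$, so by definition $J^{\mu} \subset I$. The goal is to show that $\bigl(\sqrt{I_{\alg}}\bigr)^{\mu} \subset I_{\alg}$.

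First I would invoke Lemma~\ref{lem:radical_extension} to identify $\sqrt{I_{\alg}} = \bar{k} \otimes_k J$ inside $A_{\alg}$. Next I would observe that if $J$ is generated (as an $A$-module) by a set $\{j_\lambda\}$, then $\bar{k} \otimes_k J$ is generated (as an $A_{\alg}$-module) by the same set viewed inside $A_{\alg}$, and therefore $\bigl(\bar{k} \otimes_k J\bigr)^{\mu}$ is generated by all $\mu$-fold products $j_{\lambda_1}\cdots j_{\lambda_\mu}$. These products already lie in $J^\mu \subset I$, hence their images lie in $\bar{k} \otimes_k I = I_{\alg}$. Consequently $\bigl(\sqrt{I_{\alg}}\bigr)^{\mu} \subset I_{\alg}$, so the Noether exponent of $I_{\alg}$ is at most $\mu$.

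There is no real obstacle here; the only mild point is justifying the set-theoretic equality $(\bar{k} \otimes_k J)^{\mu} = \bar{k} \otimes_k J^{\mu}$, which one may either argue via generators as above or deduce from the flatness of $\bar{k}$ over $k$ (so that $\bar{k} \otimes_k (-)$ preserves finite products of ideals inside~$A$). Either route makes the entire argument a one-line consequence of Lemma~\ref{lem:radical_extension}.
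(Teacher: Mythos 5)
Your argument is correct and is essentially the paper's own proof: both invoke Lemma~\ref{lem:radical_extension} to identify $\sqrt{I_{\alg}}$ with $\bar{k}\otimes_k\sqrt{I}$ and then note that $\mu$-fold products of generators of $\sqrt{I}$ already lie in $I$, hence in $I_{\alg}$. The extra remark about flatness is a fine (if unnecessary) alternative justification of the same generator-based step.
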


\begin{proof}
  By Lemma~\ref{lem:radical_extension}, $\sqrt{I_{\alg}}$ is generated by any set of generators of $\sqrt{I}$, so ${(\sqrt{I_{\alg}})}^\mu \subset I_{\alg}$.
\end{proof}

\begin{lemma} \label{lem:NoetherExponentC}
   Let $\bm{x} = (x_1, \ldots, x_n)$ and $\bm{\alpha} \in \mathbb{Z}_{\geqslant 0}^n$.
	For every prime ideal $I \subseteq k[\bm{x}_{\bm{\alpha}}]$  of degree $D_0$ and every $g \in k[\bm{x}_{\bm{\alpha}}]$ with $\deg g = D_1$, the \hyperlink{noether_exp}{Noether exponent} of $\langle I, g \rangle$ does not exceed $D_0D_1$.
\end{lemma}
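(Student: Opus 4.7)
The plan is to reduce to a local multiplicity computation at the minimal primes of $\langle I,g\rangle$, then globalize using the affine Bezout inequality of Heintz. By Corollary~\ref{cor:noether_extension}, I may assume $k$ is algebraically closed. If $g\in I$, then $\langle I,g\rangle=I$ is already prime, so the Noether exponent equals $1$, and we are done. Otherwise, set $A := k[\bm{x}_{\bm{\alpha}}]/I$ (a Noetherian domain) and let $\bar g\in A$ be the nonzero image of $g$; the Noether exponent of $\langle I,g\rangle$ in $k[\bm{x}_{\bm{\alpha}}]$ then agrees with the Noether exponent of $\bar g A$ in $A$, which is what I shall estimate.

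By Krull's principal ideal theorem, every minimal prime $\bar{\mathfrak p}_i$ of $\bar g A$ has height $1$ in $A$, so $A_{\bar{\mathfrak p}_i}$ is a one-dimensional local Noetherian domain. The Artinian quotient $A_{\bar{\mathfrak p}_i}/\bar g A_{\bar{\mathfrak p}_i}$ has some finite length $\mu_i$, and a composition-series argument gives $(\bar{\mathfrak p}_i A_{\bar{\mathfrak p}_i})^{\mu_i}\subseteq \bar g A_{\bar{\mathfrak p}_i}$. Hence $\mu_i$ bounds the Noether exponent of the $\bar{\mathfrak p}_i$-primary component of $\bar g A$.

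The essential global input is Heintz's affine Bezout inequality~\cite[Theorem~1]{Heintz}. In the multiplicity-tracking form available from the additivity of degree in the short exact sequence $0\to A\xrightarrow{\bar g} A\to A/\bar g A\to 0$, this yields
\[
\sum_{i=1}^{s} \mu_i\cdot \deg V(\mathfrak p_i)\;\leqslant\; \deg V(I)\cdot\deg g \;=\; D_0 D_1,
\]
where $\mathfrak p_i\subset k[\bm{x}_{\bm{\alpha}}]$ lifts $\bar{\mathfrak p}_i$. In particular, each $\mu_i\leqslant D_0 D_1$.

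To finish, one checks that the Noether exponent of $\bar g A$ itself is at most $\max_i \mu_i$ by verifying the inclusion $\bigl(\bigcap_i \bar{\mathfrak p}_i\bigr)^{\max\mu_i}\subseteq \bar g A$ after localization at every associated prime of $\bar g A$: at minimal primes this follows from the preceding paragraph, while at any embedded associated prime (which can only occur at a height $\geqslant 2$ prime where $A$ fails to be Cohen-Macaulay, by the usual depth identity for the non-zero-divisor $\bar g$ in the domain $A$) the same Bezout/multiplicity bound applies in the localization. The main obstacle is precisely this last step: controlling \emph{embedded} associated primes when $A$ is not Cohen-Macaulay requires the scheme-theoretic refinement of Bezout rather than its mere set-theoretic form, in order to keep the Noether exponent within $D_0 D_1$ without any Cohen-Macaulay assumption on $A$.
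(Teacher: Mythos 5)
Your reduction to algebraically closed $k$ via Corollary~\ref{cor:noether_extension} matches the paper, and your local analysis at the \emph{minimal} primes of $\langle I,g\rangle$ is sound: since $\bar g$ is a nonzerodivisor in the domain $A$, every minimal prime of $\bar gA$ has height one, the length $\mu_i$ of $(A/\bar gA)_{\bar{\mathfrak p}_i}$ bounds the exponent needed for the $\bar{\mathfrak p}_i$-primary component, and the refined B\'ezout inequality for a hypersurface section does give $\sum_i\mu_i\deg V(\mathfrak p_i)\leqslant D_0D_1$ (though this multiplicity-weighted form is closer to Fulton--Vogel than to the set-theoretic degree inequality of Heintz you cite). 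If $\langle I,g\rangle$ had no embedded primes, this would finish the proof, since then $\bigl(\bigcap_i\bar{\mathfrak p}_i\bigr)^{\max_i\mu_i}\subseteq\bar{\mathfrak p}_i^{\mu_i}\subseteq Q_i$ for each primary component $Q_i$.

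The gap is exactly where you flag it, and it is not closable with the tools you invoke. When $A=k[\bm{x}_{\bm{\alpha}}]/I$ fails to be Cohen--Macaulay, $\bar gA$ can have embedded associated primes, and the Noether exponent must also defeat the embedded primary components. Verifying $(\sqrt{\bar gA})^{N}\subseteq\bar gA$ locally at an embedded prime $\mathfrak q$ requires controlling the structure of $(\bar gA)_{\mathfrak q}$, whereas the B\'ezout data you have only sees the generic points of $V(I)\cap V(g)$: the lengths $\mu_i$ say nothing about how ``deep'' an embedded component sitting over a higher-codimension locus is. The sentence ``the same Bezout/multiplicity bound applies in the localization'' is an assertion, not an argument --- there is no exact sequence or degree count in your setup that bounds the required exponent at $\mathfrak q$ by $D_0D_1$. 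Controlling embedded components is precisely the hard content of the result the paper relies on, \cite[Corollary~4.6]{Jelonek}, which is proved by effective-Nullstellensatz/\L{}ojasiewicz-exponent techniques rather than by primary decomposition plus B\'ezout; the paper's entire proof is that citation together with the base-change step you already carried out.
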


\begin{proof} 
	If the ground field is algebraically closed, 
	the lemma follows from  \cite[Corollary~4.6]{Jelonek}.
    The case of not necessarily algebraically closed $k$ follows from the lemma applied to $\bar{k} \otimes_k I$ and Corollary~\ref{cor:noether_extension}.
\end{proof}



\subsubsection{Differentiation and intersection of ideals}\label{sec:diff_intersect}

The following lemma follows from~\cite[Corollary~5.2]{PogudinJets} (see also~\cite[Theorem~2.2]{GowardSmith}).

\begin{lemma}\label{lem:product}
   Let $\bm{x} = (x_1, \ldots, x_{n})$.
	For all $q, m_1, \ldots, m_q \in \mathbb{N}$ and for all 
	ideals (not necessarily differential) $I_1,\ldots,I_q \subset k[\hyperlink{diff_poly}{\bm{x}_{\bm{\infty}}}]$, 
 	\[
    I_1^{\hyperlink{def_ideals}{(m_1)}} \cdot \ldots \cdot I_q^{\hyperlink{def_ideals}{(m_q)}} \subset \sqrt{ ( I_1 \cdot \ldots \cdot I_q )^{(m_1 + \ldots + m_q)}}.
    \]
\end{lemma}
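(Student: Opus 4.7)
\emph{Proof proposal.} The plan is to reduce the general case to the binary case $q = 2$ by induction on $q$, and then handle the binary case by a pointwise argument over the algebraic closure $\bar{k}$, using Hilbert's Nullstellensatz and the Leibniz rule applied to products $fg$ with $f \in I_1, g \in I_2$.

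For the induction (the case $q = 1$ is trivial, since $I_1^{(m_1)} \subset \sqrt{I_1^{(m_1)}}$), set $J := I_1 \cdots I_{q-1}$ and $N := m_1 + \cdots + m_{q-1}$. The inductive hypothesis yields $I_1^{(m_1)} \cdots I_{q-1}^{(m_{q-1})} \subset \sqrt{J^{(N)}}$, so using the elementary inclusion $\sqrt{A} \cdot B \subset \sqrt{AB}$ (valid for any ideals $A, B$, since $(ab)^k = a^k b^k \in AB$ when $a^k \in A$),
\[
I_1^{(m_1)} \cdots I_{q-1}^{(m_{q-1})} \cdot I_q^{(m_q)} \subset \sqrt{J^{(N)}} \cdot I_q^{(m_q)} \subset \sqrt{J^{(N)} \cdot I_q^{(m_q)}}.
\]
Applying the binary case to $J$ and $I_q$ with exponents $N$ and $m_q$ gives $J^{(N)} \cdot I_q^{(m_q)} \subset \sqrt{(J I_q)^{(N + m_q)}} = \sqrt{(I_1 \cdots I_q)^{(m_1 + \cdots + m_q)}}$, which closes the induction.

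For the binary case $I^{(m)} \cdot J^{(n)} \subset \sqrt{(IJ)^{(m+n)}}$, I would first pass to $\bar{k}$ via Lemma~\ref{lem:radical_extension} so that the Nullstellensatz is available, reducing the claim to showing the varietal inclusion $V((IJ)^{(m+n)}) \subset V(I^{(m)}) \cup V(J^{(n)})$. Fix $p$ in the left-hand side and suppose for contradiction that $p$ lies in neither component on the right. Define $j_0$ to be the smallest $j$ such that $f^{(j)}(p) \neq 0$ for some $f \in I$, and $k_0$ analogously for $J$. Then $j_0 \leqslant m$ and $k_0 \leqslant n$, so $j_0 + k_0 \leqslant m + n$. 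Choose witnesses $f \in I, g \in J$ with $f^{(j_0)}(p), g^{(k_0)}(p) \neq 0$. Since $fg \in IJ$, the element $(fg)^{(j_0 + k_0)}$ lies in $(IJ)^{(m+n)}$ and thus vanishes at $p$; yet by the Leibniz formula
\[
(fg)^{(j_0+k_0)} = \sum_{l=0}^{j_0+k_0} \binom{j_0+k_0}{l} f^{(l)} g^{(j_0+k_0-l)},
\]
the minimality of $j_0$ (resp. $k_0$) forces every summand with $l < j_0$ (resp. $l > j_0$, since then $j_0 + k_0 - l < k_0$) to vanish at $p$, leaving only $\binom{j_0+k_0}{j_0} f^{(j_0)}(p) g^{(k_0)}(p) \neq 0$, a contradiction.

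The main conceptual content is the Leibniz-extraction step; once $j_0, k_0$ are defined as the minimal orders of jet-vanishing of elements of $I$ and $J$ at $p$, the only routine obstacle is checking that single witnesses $f, g$ attain these minima (immediate from the definitions) and that the ``off-diagonal'' Leibniz summands all vanish at $p$. The reduction to $\bar{k}$ and the induction on $q$ are essentially bookkeeping.
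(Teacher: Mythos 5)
Your argument is correct in substance, and it is genuinely different from what the paper does: the paper gives no proof at all for this lemma, deriving it instead from an external result (\cite[Corollary~5.2]{PogudinJets}), whereas you give a self-contained elementary proof. The reduction to the binary case and the Leibniz-extraction step (isolating the single surviving term $\binom{j_0+k_0}{j_0}f^{(j_0)}(p)g^{(k_0)}(p)$, nonzero in characteristic $0$) are exactly the right mechanism, and the inductive bookkeeping via $\sqrt{A}\cdot B\subset\sqrt{AB}$ is sound. The one point you should tighten is the appeal to Hilbert's Nullstellensatz: the ideals live in $\bar{k}[\bm{x}_{\bm{\infty}}]$, a polynomial ring in infinitely many variables, where the identity $I(V(\mathfrak{a}))=\sqrt{\mathfrak{a}}$ is not automatic. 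This is repairable by routine bookkeeping --- any element of $I^{(m)}\cdot J^{(n)}$ already lies in $I_0^{(m)}\cdot J_0^{(n)}$ for finitely generated $I_0\subset I$, $J_0\subset J$, and both sides of the claimed inclusion can then be computed in a finite polynomial subring $k[\bm{x}_{\bm{\alpha}}]$, where radicals extend compatibly --- but a cleaner fix is to drop points entirely and run the identical argument with an arbitrary prime ideal $\mathfrak{p}\supset (IJ)^{(m+n)}$ in place of $p$, letting $j_0$ be the least $j$ with $f^{(j)}\notin\mathfrak{p}$ for some $f\in I$ (and similarly $k_0$ for $J$); since $\sqrt{\mathfrak{a}}$ is the intersection of the primes containing $\mathfrak{a}$ in any commutative ring, this yields the conclusion directly and removes the need for both the Nullstellensatz and the passage to $\bar{k}$ via Lemma~\ref{lem:radical_extension}.
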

    
\begin{lemma}  \label{lem:intersection}
    Let $\bm{x} = (x_1, \ldots, x_{n})$.
	For all $q, m_1, \ldots, m_q \in \mathbb{N}$ and for all 
	ideals (not necessarily differential) $I_1,\ldots,I_q \subset k[\hyperlink{diff_poly}{\bm{x}_{\bm{\infty}}}]$, 
 	$$
    I_1^{\hyperlink{def_ideals}{(m_1)}} \cap \ldots \cap I_q^{\hyperlink{def_ideals}{(m_q)}} \subset \sqrt{ ( I_1 \cap \ldots \cap I_q )^{(m_1 + \ldots + m_q)}}.\vspace{-0.1in}
    $$
\end{lemma}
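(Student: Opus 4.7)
The plan is to reduce the intersection statement to the product statement of Lemma~\ref{lem:product} by exploiting the elementary inclusion $I_1 \cdot \ldots \cdot I_q \subset I_1 \cap \ldots \cap I_q$ together with the fact that any element of a finite intersection of ideals has a power lying in the corresponding product.

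More concretely, I would take any $f \in I_1^{(m_1)} \cap \ldots \cap I_q^{(m_q)}$. Since $f \in I_j^{(m_j)}$ for every $j$, the product $f^q = f \cdot f \cdot \ldots \cdot f$ (with $q$ factors) lies in $I_1^{(m_1)} \cdot \ldots \cdot I_q^{(m_q)}$. Applying Lemma~\ref{lem:product}, this gives
\[
  f^q \in \sqrt{(I_1 \cdot \ldots \cdot I_q)^{(m_1 + \ldots + m_q)}}.
\]
The monotonicity of the operation $J \mapsto J^{(m)}$ with respect to inclusion, combined with $I_1 \cdot \ldots \cdot I_q \subset I_1 \cap \ldots \cap I_q$, yields
\[
  (I_1 \cdot \ldots \cdot I_q)^{(m_1 + \ldots + m_q)} \subset (I_1 \cap \ldots \cap I_q)^{(m_1 + \ldots + m_q)},
\]
and hence $f^q$ lies in $\sqrt{(I_1 \cap \ldots \cap I_q)^{(m_1 + \ldots + m_q)}}$. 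Since the radical of an ideal is itself radical, this implies $f \in \sqrt{(I_1 \cap \ldots \cap I_q)^{(m_1 + \ldots + m_q)}}$, which is the desired inclusion.

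There is essentially no obstacle here: all the work has been done in Lemma~\ref{lem:product}, and the passage from product to intersection is the standard trick of raising an element of an intersection to the $q$th power to land in the product. The only small points to verify are the monotonicity of the prolongation operation $J \mapsto J^{(m)}$ (which is immediate from its definition via generators $a^{(j)}$ for $a \in J$, $j \leqslant m$) and the fact that a radical ideal is closed under taking $q$th roots, both of which are routine.
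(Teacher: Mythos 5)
Your proof is correct and follows essentially the same route as the paper: raise an element of the intersection to the $q$th power to land in the product $I_1^{(m_1)}\cdots I_q^{(m_q)}$, apply Lemma~\ref{lem:product}, and then use $I_1\cdots I_q\subset I_1\cap\ldots\cap I_q$ together with monotonicity of prolongation and of the radical. The paper phrases this at the level of ideal inclusions rather than element-wise, but the argument is identical.
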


\begin{proof}
We have
    \[
    \left( I_1^{(m_1)} \cap \ldots \cap I_q^{(m_q)} \right)^{q} \subset  I_1^{(m_1)} \cdot \ldots \cdot I_q^{(m_q)}\implies
    I_1^{(m_1)} \cap \ldots \cap I_q^{(m_q)}\subset \sqrt{I_1^{(m_1)} \cdot \ldots \cdot I_q^{(m_q)}}.\]
    Lemma~\ref{lem:product} implies that the latter radical is contained in $\sqrt{( I_1 \cdot \ldots \cdot I_q )^{(m_1 + \ldots + m_q)}}$.
    Thus, 
    \[
    I_1^{(m_1)} \cap \ldots \cap I_q^{(m_q)} \subset \sqrt{( I_1 \cap \ldots \cap I_q )^{(m_1 + \ldots + m_q)}}.\qedhere
    \]
\end{proof}

\subsection{Proofs of the main results}\label{sec:proofs}

Throughout this section, $k$ denotes a differential field and $\bar{k}$ denotes its algebraic closure.
By~\cite[Lemma~II.1]{Kol}, the derivation on $k$ can be extended uniquely to $\bar{k}$.
We introduce\vspace{-0.03in}
\begin{equation}\label{eq:B}
  B(m, D) := \sum\limits_{i = 0}^m D^{2(2^i - 1)}.\vspace{-0.03in}
\end{equation}
 The arguments in this section are structured as follows. We will start by showing that~\eqref{eq:B} is an upper bound for the number of differentiations in the radical differential ideal membership problem for polynomial prime and equidimensional radical ideals of differential polynomials (see Propositions~\ref{prop:prime} and~\ref{prop:equidimensional}, respectively). This bound is adjusted to include arbitrary polynomial ideals of differential polynomials in Proposition~\ref{prop:any}.
 This results in the bound from Theorem~\ref{thm:radical_elimination}, which we explain in Section~\ref{sec:finishingtheproofs}, in which we also finish proving Theorems~\ref{th:elimination_variables} and~\ref{th:FullElimination} by estimating $B$ in~\eqref{eq:B} in terms of $m$, $d$, and $|\bm{\alpha}|$ or $|\bm{\alpha}|$ and $|\bm{\beta}|$, respectively.
 
 \vspace{-0.05in}
\subsubsection{Prime ideals}\label{secsub:prime}

\begin{proposition}\label{prop:prime}
  For every positive integer $n$, tuple $\bm{\alpha} \in \mathbb{Z}_{\geqslant 0}^n$, prime ideal $I \subset \bar{k}[\hyperlink{diff_poly}{\bm{x}_{\bm{\alpha}}}]$,
    and polynomial $f \in \bar{k}[\bm{x}_{\bm{\alpha}}]$,
  we have
      $$f \in \sqrt{  I^{\hyperlink{def_ideals}{(\infty)}} } \iff f \in \sqrt{ I^{ \hyperlink{def_ideals}{( B(m, D) )}} },$$
  where $m = \dim I$, $D = \deg I$, and $B(m, D)$ is defined in~\eqref{eq:B}.
\end{proposition}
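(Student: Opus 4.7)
The plan is to induct on $m := \dim I$; the reverse implication is immediate, so the task is to show $f \in \sqrt{I^{(\infty)}} \Rightarrow f \in \sqrt{I^{(B(m,D))}}$. For the base case $m = 0$, the ideal $I$ is maximal, and I would invoke Lemma~\ref{lem:prolongation}. If $\langle I \cap \overline{k}[\bm{x}_{\bm{\alpha}-\bm{1}}]\rangle^{(1)} \subset I$, the lemma forces $\sqrt{I^{(\infty)}} \cap \overline{k}[\bm{x}_{\bm{\alpha}}] = I$, and the bound is $0$. Otherwise some element of $\langle I \cap \overline{k}[\bm{x}_{\bm{\alpha}-\bm{1}}]\rangle^{(1)}$ lies outside the maximal $I$, forcing $1 \in I^{(1)}$, which matches $B(0, D) = 1$. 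Zero entries of $\bm{\alpha}$ correspond to inactive variables and are handled by restricting to the active coordinates.

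For the inductive step $m > 0$, I would follow the divide-and-conquer strategy described in the outline. If $\langle I \cap \overline{k}[\bm{x}_{\bm{\alpha}-\bm{1}}]\rangle^{(1)} \subset I$, Lemma~\ref{lem:prolongation} again yields bound $0$; otherwise Lemma~\ref{lem:one_polynomial} produces $g \in I^{(1)}$ with $\deg g \leqslant D$ and $\dim\langle I, g\rangle < m$. Set $J := \langle I, g\rangle \subset I^{(1)}$. By the Bezout estimate \cite[Lemma~2]{Heintz}, the prime components $P_1, \ldots, P_s$ of $\sqrt{J}$ all have dimension $m - 1$ and $\sum_j D_j \leqslant D \cdot \deg g \leqslant D^2$, where $D_j := \deg P_j$. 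For any $f \in \sqrt{I^{(\infty)}}$, the inclusion $I \subset P_j$ gives $f \in \sqrt{P_j^{(\infty)}}$ for every $j$, and the inductive hypothesis upgrades this to $f \in \sqrt{P_j^{(B(m-1, D_j))}}$. A sufficiently large power $f^M$ thus lies in $\bigcap_j P_j^{(B(m-1, D_j))}$, which by Lemma~\ref{lem:intersection} is contained in $\sqrt{\sqrt{J}^{(T)}}$ with $T := \sum_j B(m-1, D_j)$. Lemma~\ref{lem:NoetherExponentC} bounds the Noether exponent of $J$ by $\mu := D \cdot \deg g \leqslant D^2$, and applying Lemma~\ref{lem:product} to $\mu$ copies of $\sqrt{J}$ (together with $(\sqrt{J})^\mu \subset J$) converts $\sqrt{J}^{(T)}$ into a subset of $\sqrt{J^{(\mu T)}}$. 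Since $J \subset I^{(1)}$, $J^{(\mu T)} \subset I^{(\mu T + 1)}$, and hence $f \in \sqrt{I^{(\mu T + 1)}}$.

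The hard part will be the closing arithmetic $\mu T + 1 \leqslant B(m, D)$, which rests on the key identity $B(m, D) = 1 + D^2 \cdot B(m - 1, D^2)$; one therefore needs $T \leqslant B(m-1, D^2)$. A straightforward componentwise sum falls short at the $i = 0$ term of each $B(m-1, D_j)$: the constant summands add up to $s$, which can be as large as $\sum_j D_j \leqslant D^2$. The cleanest fix is to regard $\sqrt{J}$ as a single equidimensional radical ideal of dimension $m - 1$ and total degree $\leqslant D^2$ and apply the same bound $B(m-1, D^2)$ directly, rather than summing prime-component bounds. This requires a companion result for equidimensional radical ideals established in the same section, proved by the parallel induction and supported by an equidimensional-radical analog of Lemma~\ref{lem:one_polynomial}; once available, the recursion $\mu T + 1 \leqslant D^2 \cdot B(m - 1, D^2) + 1 = B(m, D)$ closes as desired.
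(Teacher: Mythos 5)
Your skeleton matches the paper's: reduce via Lemmas~\ref{lem:prolongation} and~\ref{lem:one_polynomial}, decompose $\sqrt{\langle I,g\rangle}$, apply the inductive hypothesis to the components, and recombine with Lemmas~\ref{lem:intersection}, \ref{lem:NoetherExponentC}, and~\ref{lem:product}. But your diagnosis of the closing arithmetic is wrong in one direction and your fix is incomplete in the other. For $m\geqslant 2$ the ``straightforward componentwise sum'' does \emph{not} fall short: the paper proves Lemma~\ref{lem:polynomial_convexity}, a superadditivity statement $\sum_j p(a_j)\leqslant p\bigl(\sum_j a_j\bigr)$ valid precisely for polynomials with constant term $1$ and degree $\geqslant 2$, and $B(m-1,t)$ has degree $2(2^{m-1}-1)\geqslant 2$ exactly when $m\geqslant 2$. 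The cross terms in $p(a+b)$ absorb the extra constants you are worried about, so $T=\sum_j B(m-1,D_j)\leqslant B(m-1,D^2)$ and the recursion $\mu T+1\leqslant D^2B(m-1,D^2)+1=B(m,D)$ closes. You do not invoke this lemma or any substitute, so even your $m\geqslant 2$ step is not actually finished.

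For $m=1$ your worry is legitimate ($B(0,D_j)=1$, so $T=s$ can be as large as $D^2$ and $\mu T+1$ can reach $D^4+1$), but your proposed repair is deferred to an unproved ``companion result,'' and the route you sketch toward it (an equidimensional analog of Lemma~\ref{lem:one_polynomial}) is not how such a result is obtained. The paper instead treats $m=1$ as a second base case and works with a primary decomposition $J=Q_1\cap\ldots\cap Q_s$ rather than with $\sqrt{J}$: letting $m_j$ be the multiplicity of the point $p_j=V(I_j)$, one has $I_j^{m_j}\subset Q_j$ and $\sum_j m_j\leqslant D^2$ by B\'ezout, and Lemma~\ref{lem:product} applied to $m_1$ copies of $I_1^{(1)}$, \ldots, $m_s$ copies of $I_s^{(1)}$ costs only one derivative per factor, giving $f\in\sqrt{J^{(\sum_j m_j)}}\subset\sqrt{I^{(D^2+1)}}$. (The zero-dimensional equidimensional combination you appeal to is essentially the $m=0$ case of Proposition~\ref{prop:equidimensional}, whose proof is a separate sum-of-coprime-ideals argument, not a dimension-reduction step; in the paper it is proved \emph{after}, not before, Proposition~\ref{prop:prime}.) As written, your argument has a genuine gap at $m=1$: the statement you defer to is exactly the hard content.
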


We will use the following 
lemma.

 \begin{lemma}\label{lem:polynomial_convexity}
		For all\vspace{-0.05in}
        \begin{itemize}[leftmargin=0.5cm,itemsep=-0.05cm]
        \item $p(x)\in \mathbb{Z}_{\geqslant 0}[x]$  such that $p(0) = 1$ and $\deg p \geqslant 2$,
        \item 
        $S, n\geqslant 1$ and tuples  $(a_1, \ldots, a_n)$ of positive integers such that $\sum\limits_{i = 1}^n a_i = S$,
        \end{itemize} 
        we have\vspace{-0.15in}
		\[\sum\limits_{i = 1}^n p(a_i) \leqslant p(S).\]\end{lemma}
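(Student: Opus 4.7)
The plan is to reduce to the two-term case by induction on $n$ and then verify the two-term inequality $p(a) + p(b) \leqslant p(a + b)$ for all positive integers $a, b$ by directly expanding $p$.

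First I would handle the case $n = 1$ trivially (equality holds). For $n \geqslant 2$, I would peel off one summand: by the induction hypothesis applied to $(a_1, \ldots, a_{n-1})$, whose sum is $S - a_n \geqslant 1$, we have
\[
  \sum_{i=1}^n p(a_i) \leqslant p(S - a_n) + p(a_n),
\]
so the statement reduces to showing $p(a) + p(b) \leqslant p(a+b)$ whenever $a, b \geqslant 1$.

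For the two-term case, write $p(x) = \sum_{i=0}^k c_i x^i$ with $c_i \in \mathbb{Z}_{\geqslant 0}$, $c_0 = 1$, $k = \deg p \geqslant 2$, and $c_k \geqslant 1$. Then
\[
  p(a+b) - p(a) - p(b) = -1 + \sum_{i=1}^k c_i \bigl((a+b)^i - a^i - b^i\bigr).
\]
The binomial expansion gives $(a+b)^i - a^i - b^i = \sum_{j=1}^{i-1}\binom{i}{j}a^j b^{i-j}$, which is $\geqslant 0$ for every $i \geqslant 1$, and for $i \geqslant 2$ with $a, b \geqslant 1$ it is at least $i - 1 \geqslant 1$. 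Since $c_k \geqslant 1$ and $k \geqslant 2$, the contribution of the $i = k$ term alone is at least $1$, which cancels the $-1$; all other terms are nonnegative. Hence $p(a+b) - p(a) - p(b) \geqslant 0$.

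The only potentially delicate point is making sure the hypothesis $\deg p \geqslant 2$ really is used in the right place; indeed, if $\deg p = 1$ or $p$ were allowed to have $p(0) = 0$, the above argument would collapse (e.g.\ $p(x) = 1 + x$ fails the inequality even for $a = b = 1$), so the degree hypothesis exactly provides the single positive integer needed to offset the constant term. No further geometric or algebraic machinery is required.
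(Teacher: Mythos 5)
Your proof is correct and follows essentially the same route as the paper: reduce to the two-term inequality $p(a)+p(b)\leqslant p(a+b)$ and use the binomial cross-terms of the leading (degree $\geqslant 2$) term, whose coefficient is a positive integer, to absorb the extra constant $1$. No issues.
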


	\begin{proof}
        It is sufficient to prove that, for all $a, b \geqslant 1$,  $$p(a) + p(b) \leqslant p(a + b).$$
		Let $p(x) = 1 + c_1x + \ldots + c_dx^d$, where $d \geqslant 2$  and $c_d >0$.
        We immediately have
        $$
        c_1(a + b) + c_2(a^2 + b^2) + \ldots + c_{d - 1}(a^{d - 1} + b^{d - 1}) \leqslant c_1(a + b) + c_2(a + b)^2 + \ldots + c_{d - 1}(a + b)^{d - 1}.
        $$
        So, it is sufficient to prove that $2 + c_d(a^{d} + b^d) \leqslant 1 + c_d(a + b)^d$. We have
       \[ 
        1 + c_d(a + b)^d \geqslant 1 + c_da^d + c_d\binom{d}{1}ab^{d - 1} + c_db^d \geqslant 2 + c_d(a^d + b^d).\qedhere
       \]
	\end{proof}

\begin{proof}[Proof of Proposition~\ref{prop:prime}]
  We will prove the proposition by induction on $m$.
  The base cases will be $m = 0, 1$.\vspace{-0.05in}
\begin{itemize}[leftmargin=0.4cm,itemsep=0.01cm]
\item\underline{Case $m = 0$} follows from     Lemma~\ref{lem:prolongation}.
\item\underline{Case $m = 1$.} Then $B(m, D) = D^2 + 1$.
  Consider $f \in \sqrt{ I^{(\infty)} } \cap \bar{k}[\bm{x}_{\bm{\alpha}}]$.
        If $(I \cap \bar{k}[\bm{x}_{\bm{\alpha} - \bm{1}}])^{(1)} \subset I$, then Lemma~\ref{lem:prolongation} implies that $f \in I$.
		Otherwise, by Lemma~\ref{lem:one_polynomial}, there exists $g \in (I \cap \bar{k}[\bm{x}_{\bm{\alpha} - \bm{1}}])^{(1)}$ such that 
        \[
        \dim I > \dim \langle I, g\rangle\quad \text{and}\quad \deg g \leqslant D.
        \]
        Let $J=(I, g)$ and $J = Q_1 \cap \ldots \cap Q_s$ be a primary decomposition of $J$. Then 
        \[
        \sqrt{J} = I_1 \cap \ldots \cap I_s , \text{ where } I_j := \sqrt{Q_j} \text{ for } 1\leqslant j \leqslant s.
        \]
        Since $\dim I_j=0$  for every $j$, $V(I_j) = p_j$ for some point $p_j$.
        Let 
        \[
        m_j=\dim_{\bar{k}} \bar{k}[\bm{x}_{\bm{\alpha}}] / Q_j
        \]
        be the multiplicity of $J$ at the point $p_j$. 
       Then $I_j^{m_j} \subset Q_j$.
       Bezout's theorem  \cite[Theorem~7.7, Chapter 1]{Hartshorne} implies that $$m_1 + \ldots + m_s = \deg I \cdot \deg g \leqslant D^2.$$
  	   The inclusions 
       \[
       f \in \sqrt{ I_1^{(1)} \cap \ldots \cap I_s^{(1)} }\quad\text{and}\quad I_1^{m_1} \cdot \ldots \cdot I_s^{m_s} \subset Q_1\cdot\ldots\cdot Q_s \subset J
       \]
       together with Lemma~\ref{lem:product} imply
        \[
        f \in \sqrt{ \left( I_1^{(1)}\right)^{m_1}\cdot \ldots\cdot \left( I_s^{(1)}\right)^{m_s} } \subset \sqrt{ \big( I_1^{m_1}\cdot \ldots\cdot I_s^{m_s}\big)^{(m_1 + \ldots + m_s)} } \subset \sqrt{J^{(D^2)}} \subset \sqrt{I^{(1 + D^2)}} = \sqrt{I^{(B(1, D))}}.
        \]
        
\item\underline{Inductive step for $m > 1$.}
    Consider $f \in \bar{k}[\bm{x}_{\bm{\alpha}}] \cap \sqrt{ I^{(\infty)} }$.
        If $(I \cap \bar{k}[\bm{x}_{\bm{\alpha} - \bm{1}}])^{(1)} \subset I$, then Lemma~\ref{lem:prolongation} implies that $f \in I$.
		Otherwise, by Lemma~\ref{lem:one_polynomial}, there exists $g \in (I \cap \bar{k}[\bm{x}_{\bm{\alpha} - \bm{1}}])^{(1)}$ such that $$\dim I > \dim \langle I, g\rangle\quad \text{and}\quad \deg g \leqslant D.$$
        Consider the minimal prime decomposition of  $\sqrt{\langle I, g\rangle}$: 
        \[
        \widetilde{I} := \sqrt{\langle I, g\rangle} = I_1 \cap \ldots \cap I_s.
        \]
        Then $\dim I_j = m - 1$ for all $1 \leqslant j \leqslant s$.
        Let $D_j := \deg I_j$ for every $1 \leqslant j \leqslant s$.
        \cite[Theorem~7.7, Chapter 1]{Hartshorne} implies that $\sum\limits_{j = 1}^s D_i \leqslant D^2$.
        Since all $\sqrt{ I_1^{(\infty)} }, \ldots, \sqrt{ I_s^{(\infty)} }$ contain $f$, the inductive hypothesis implies that\vspace{-0.1in}
        \[
        f \in \sqrt{ I_1^{(B(m - 1, D_1))} \cap \ldots \cap I_s^{(B(m - 1, D_s))} }.\vspace{-0.1in}
        \]
        By Lemma~\ref{lem:intersection},\vspace{-0.1in}
        \[
        f \in \sqrt{ \left( I_1 \cap \ldots \cap I_s\right)^{(B)} } = \sqrt{ \widetilde{I}^{(B)} },\quad\text{ where } B := \sum\limits_{i = 1}^s B(m - 1, D_i).\vspace{-0.1in}
        \]
Lemma~\ref{lem:NoetherExponentC} implies that $\widetilde{I}^{D^2} \subset (I, g)$.
        Lemma~\ref{lem:product} implies that
        \begin{equation}\label{eq:fIncl}
        f \in \sqrt{ \left( \widetilde{I}^{(B)} \right)^{D^2} } \subset \sqrt{ \langle I, g\rangle^{(D^2B)} } \subset \sqrt{ I^{(D^2B + 1)} }.
        \end{equation}
        $B(m - 1, t)$ considered as a polynomial in $t$ meets the requirements of Lemma~\ref{lem:polynomial_convexity}.
        Applying Lemma~\ref{lem:polynomial_convexity} and using $\sum\limits_{i = 1}^s D_i \leqslant D^2$, we have\vspace{-0.16in}
        \begin{equation}\label{eq:boundDB}
          D^2 B + 1 = D^2 \sum\limits_{i = 1}^s B(m - 1, D_i) + 1 \leqslant D^2 B(m - 1, D^2) + 1 = B(m, D).
        \end{equation}
        Combining~\eqref{eq:fIncl} and~\eqref{eq:boundDB}, we show that $f \in \sqrt{ I^{(B(m, D))} }$.\qedhere
        \end{itemize}
\end{proof}


\subsubsection{Radical equidimensional ideals}\label{secsub:radequidim}

\begin{proposition}\label{prop:equidimensional}
  For every positive integer $n$, tuple $\bm{\alpha} \in \mathbb{Z}_{\geqslant 0}^n$, \hyperlink{def_ideals}{radical equidimensional} ideal $I \subset \bar{k}[\hyperlink{diff_poly}{\bm{x}_{\bm{\alpha}}}]$,
    and polynomial $f \in \bar{k}[\bm{x}_{\bm{\alpha}}]$,
  we have
      $$f \in \sqrt{ I^{\hyperlink{def_ideals}{(\infty)}} } \iff f \in \sqrt{ I^{ \hyperlink{def_ideals}{( B(m, D) )}} },$$
  where $m = \dim I$, $D = \deg I$, and $B(m, D)$ is defined in~\eqref{eq:B}.
\end{proposition}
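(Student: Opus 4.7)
The plan is to reduce to Proposition~\ref{prop:prime} via the minimal prime decomposition of $I$. Write $I = I_1 \cap \cdots \cap I_s$ with each $I_j$ prime of dimension $m$ and degree $D_j$; by additivity of degree for equidimensional varieties, $D = \sum_{j=1}^s D_j$. Fix $f \in \sqrt{I^{(\infty)}}$; for each $j$ the inclusion $I \subseteq I_j$ gives $f \in \sqrt{I_j^{(\infty)}}$, so Proposition~\ref{prop:prime} yields $f \in \sqrt{I_j^{(B(m, D_j))}}$.

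For $m \geqslant 1$, I would pick $n$ large enough that $f^n \in I_j^{(B(m, D_j))}$ for every $j$, placing $f^n$ in $\bigcap_j I_j^{(B(m, D_j))}$. Lemma~\ref{lem:intersection} then gives $f^n \in \sqrt{I^{(\sum_j B(m, D_j))}}$, hence $f \in \sqrt{I^{(\sum_j B(m, D_j))}}$. The polynomial $p(x) := B(m, x) = 1 + \sum_{i=1}^{m} x^{2(2^i - 1)}$ satisfies $p(0) = 1$ and $\deg p = 2(2^m - 1) \geqslant 2$, so Lemma~\ref{lem:polynomial_convexity} (with $S = D$ and $a_j = D_j$) gives $\sum_j B(m, D_j) \leqslant B(m, D)$, closing this case.

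The main obstacle is the case $m = 0$: here $B(0, \cdot) \equiv 1$ has degree zero, Lemma~\ref{lem:polynomial_convexity} does not apply, and the naive combination only places $f$ in $\sqrt{I^{(s)}}$ rather than the sharp $\sqrt{I^{(1)}}$. I would argue this case directly. Over $\bar{k}$, each $I_j$ is the maximal ideal of a $\bar{k}$-point $p_j$, and each $I_j^{(1)}$ is again maximal, defining the unique differential lift $p_j^{(1)}$. The crux is the equality $\sqrt{I^{(1)}} = \bigcap_j \sqrt{I_j^{(1)}}$; taking varieties, this reduces to showing $V(I^{(1)}) \subseteq \bigcup_j V(I_j^{(1)})$. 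Using comaximality of the $I_j$, one shows that the linear forms $\{(\partial e/\partial x_i^{(k)})|_{p_j} : e \in I\}$ span the cotangent space at $p_j$: for each coordinate $x_i^{(k)}$, pick $e = g \cdot h$ with $g \in I_j$ linear in $x_i^{(k)}$ and $h \in \bigcap_{i' \neq j} I_{i'}$ with $h(p_j) \neq 0$, and compute $de|_{p_j} = h(p_j)\, dx_i^{(k)}$. Hence the constraints $e'(p_j, r) = 0$ for $e \in I$ uniquely determine the lift $r = p_j^{(1)}$, establishing the claim. Combining this equality with $f \in \sqrt{I_j^{(1)}}$ from Proposition~\ref{prop:prime} gives $f \in \sqrt{I^{(1)}} = \sqrt{I^{(B(0, D))}}$.
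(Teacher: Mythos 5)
Your treatment of the case $m \geqslant 1$ coincides with the paper's: minimal prime decomposition, Proposition~\ref{prop:prime} on each component, Lemma~\ref{lem:intersection}, and Lemma~\ref{lem:polynomial_convexity} applied to $B(m,\cdot)$ with $S = D$. The genuine divergence is the case $m = 0$, and both routes work. The paper stays ideal-theoretic: applying Lemma~\ref{lem:intersection} with the exponent tuple $(0,\dots,0,1,0,\dots,0)$ gives $I_1 \cap \cdots \cap I_j^{(1)} \cap \cdots \cap I_s \subseteq \sqrt{I^{(1)}}$ for each $j$; summing over $j$, replacing $I_j^{(1)}$ by $\langle f^M \rangle$, and using that the comaximal zero-dimensional ideals satisfy $\sum_j \bigcap_{i \neq j} I_i = \bar{k}[\bm{x}_{\bm{\alpha}}]$ extracts $f^M \in \sqrt{I^{(1)}}$. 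You instead prove the stronger identity $\sqrt{I^{(1)}} = \bigcap_j \sqrt{I_j^{(1)}}$ geometrically, via $V(I^{(1)}) \subseteq \bigcup_j V(I_j^{(1)})$ and a cotangent-space computation at each $p_j$. That computation is correct, and is cleanest phrased as follows: for a candidate point $(p_j, r)$, the map $L(e) := e'(p_j, r)$ is a derivation over evaluation at $p_j$, hence kills $\mathfrak{m}_{p_j}^2$ and is $\bar{k}$-linear on $\mathfrak{m}_{p_j}$; since the classes of your products $gh \in I$ span $\mathfrak{m}_{p_j}/\mathfrak{m}_{p_j}^2$, vanishing of $L$ on $I$ forces vanishing on all of $I_j = \mathfrak{m}_{p_j}$, i.e., $(p_j, r) \in V(I_j^{(1)})$. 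One small correction: $I_j^{(1)}$ need not be maximal --- for $I_j = \langle x, x' - 1 \rangle$ it is the unit ideal and no lift $p_j^{(1)}$ exists --- but this does not hurt you, because the inclusion of varieties is exactly what the derivation argument delivers, and it is vacuous over $p_j$ when $V(I_j^{(1)}) = \varnothing$ (the same products $gh$ then produce the nonzero constant obstruction). The trade-off: the paper's argument is shorter and purely algebraic, while yours explains conceptually why the exponent drops from the naive $s$ to the sharp $1$, namely that prolongation is a local operation at the finitely many points of a zero-dimensional variety.
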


\begin{proof}
        Let $I = I_1 \cap I_2 \cap \ldots \cap I_s$ be the irreducible prime decomposition of $I$.
      Let $D_j := \deg I_j$ for $1 \leqslant j \leqslant s$. 
      Consider $f \in \sqrt{ I^{(\infty)} } \cap \bar{k}[\bm{x}_{\bm{\alpha}}]$.
\begin{itemize}[leftmargin=0.4cm,itemsep=0.01cm]    
 \item     \underline{Case $m > 0$.}
      Since $f \in \sqrt{ I_j^{(\infty)} }$ for all $1 \leqslant j \leqslant s$, 
      \[
      f \in \sqrt{I_1^{(B(m, D_1))} 
      \cap \ldots \cap I_s^{(B(m, D_s))}}.
      \]
      Lemma~\ref{lem:intersection} implies 
      \[
      f \in \sqrt{(I_1 \cap \ldots \cap I_s)^{(B)}}, \text{ where } B = \sum\limits_{i = 1}^s B(m, D_i).
      \]
      $B(m, t)$ as a polynomial in $t$ meets the requirements of Lemma~\ref{lem:polynomial_convexity}.
      Thus, $B \leqslant B(m, D)$.
      
  \item    \underline{Case $m=0$.}
      Since $B(0, D) = 1$, $f \in \sqrt{I_{j}^{(1)}}$ for all $1 \leqslant j \leqslant s$.
      There exists an integer $M$ such that $f^M \in I_{j}^{(1)}$ for all $1 \leqslant j \leqslant s$.
    Lemma~\ref{lem:intersection} implies that 
	\[
    I_{1} \cap \ldots \cap I_{j - 1} \cap I_{j}^{(1)} \cap I_{j + 1} \cap \ldots \cap I_{s} \subset \sqrt{I^{(1)}} \text{ for every } 1 \leqslant j \leqslant s.\vspace{-0.1in}
    \]
	Hence,\vspace{-0.1in}
	\[
    \sum\limits_{j = 1}^{s} I_{1} \cap \ldots \cap I_{j - 1} \cap I_{j}^{(1)} \cap I_{j + 1} \cap \ldots \cap I_{s} \subset \sqrt{I^{(1)}}.
    \]
	The left-hand side of the above inclusion contains the ideal 
	\begin{equation}\label{eq:sum_ideals}
\sum\limits_{j=1}^{s} I_{1} \cap \ldots \cap I_{j-1} \cap \big\langle f^M\big\rangle \cap I_{j + 1} \cap \ldots \cap I_{s} \supset \big\langle f^M\big\rangle\cdot\sum\limits_{j=1}^{s} I_{1} \cap \ldots \cap I_{j - 1} \cap I_{j + 1} \cap \ldots \cap I_{s}.
	\end{equation}
    Since $\sum\limits_{j=1}^{s} I_{1} \cap \ldots \cap I_{j - 1} \cap I_{j + 1} \cap \ldots\cap I_{s}$ is a sum of zero-dimensional ideals without a common zero, it is equal to  $\bar{k}[\bm{x}_{\bm{\alpha}}]$. 
	Therefore, the right-hand side of~\eqref{eq:sum_ideals} contains $f^M$.
	Thus $f \in \sqrt{I^{(1)}} = \sqrt{I^{(B(0, D))}}$.\qedhere
    \end{itemize}
\end{proof}


\subsubsection{Arbitrary ideals}\label{secsub:arbitrary}

\begin{proposition}\label{prop:any}
    For every positive integer $n$, tuple $\bm{\alpha} \in \mathbb{Z}_{\geqslant 0}^n$, ideal $I \subset k[\hyperlink{diff_poly}{\bm{x}_{\bm{\alpha}}}]$,
    and 
    $f \in k[\bm{x}_{\bm{\alpha}}]$,
  we have
      \[
      f \in \sqrt{ I^{\hyperlink{def_ideals}{(\infty)}} } \iff f \in \sqrt{ I^{\hyperlink{def_ideals}{( B )}} },\vspace{-0.15in}
      \]
      where
      \begin{itemize}[leftmargin=0.5cm,itemsep=-0.05cm]
        \item $m = \dim I$,
        \item
$D_i$ is the degree of the \hyperlink{def_ideals}{equidimensional component} of $I$ of dimension  $i$, $0\leqslant i\leqslant m$,
        \item $\mu$ is the \hyperlink{noether_exp}{Noether exponent} of $I$ (which exists because $k[\bm{x}_{\bm{\alpha}}]$ is Noetherian),
        \item $B = \mu\cdot\sum\limits_{i = 0}^m B(i, D_i)$, where $B(m, D)$ is defined in~\eqref{eq:B}. 
      \end{itemize}
  \end{proposition}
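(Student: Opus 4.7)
The plan is to reduce to Proposition~\ref{prop:equidimensional} by decomposing $\sqrt{I}$ into its equidimensional components and then to bridge back from $\sqrt{I}$ to $I$ using the Noether exponent $\mu$. The reverse implication is immediate, so the work is in showing $f\in\sqrt{I^{(\infty)}}\Rightarrow f\in\sqrt{I^{(B)}}$.

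First, I would reduce to the case $k=\bar{k}$. By Lemma~\ref{lem:radical_extension}, the membership $f\in\sqrt{I^{(N)}}$ is equivalent after tensoring with $\bar{k}$, and by Corollary~\ref{cor:noether_extension} the Noether exponent of $\bar{k}\otimes_k I$ is at most $\mu$. Moreover, the equidimensional decomposition and the invariants $D_i$ are preserved after base change: the equidimensional component of dimension $i$ of $\bar{k}\otimes_k\sqrt{I}=\sqrt{\bar{k}\otimes_k I}$ has degree equal to $D_i$, since degree is additive over prime components and stable under field extension. So assume $k=\bar{k}$.

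Second, decompose $\sqrt{I}=J_0\cap J_1\cap\ldots\cap J_m$, where $J_i$ is the equidimensional component of dimension $i$ (we set $J_i=\bar{k}[\bm{x}_{\bm{\alpha}}]$ when there is no component of that dimension, in which case $D_i=0$ and $B(i,D_i)=0$ under the convention $0^0=0$). Assuming $f\in\sqrt{I^{(\infty)}}=\sqrt{(\sqrt{I})^{(\infty)}}$, we have $f\in\sqrt{J_i^{(\infty)}}$ for each $i$. Proposition~\ref{prop:equidimensional} applied to each $J_i$ gives $f\in\sqrt{J_i^{(B(i,D_i))}}$, and then Lemma~\ref{lem:intersection} yields
\[
f\in\sqrt{\bigl(J_0\cap\ldots\cap J_m\bigr)^{(B')}}=\sqrt{(\sqrt{I})^{(B')}},\quad B':=\sum_{i=0}^m B(i,D_i).
\]

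Third, the key new step, which I expect to be the main obstacle: one must pass from a prolongation of $\sqrt{I}$ to a prolongation of $I$ while paying only a factor of $\mu$ in the number of differentiations. Since $(\sqrt{I})^\mu\subset I$, applying Lemma~\ref{lem:product} to $\mu$ copies of $\sqrt{I}$ with $m_1=\ldots=m_\mu=B'$ gives
\[
\bigl((\sqrt{I})^{(B')}\bigr)^{\mu}\subset\sqrt{\bigl((\sqrt{I})^{\mu}\bigr)^{(\mu B')}}\subset\sqrt{I^{(\mu B')}}.
\]
Combining with $f\in\sqrt{(\sqrt{I})^{(B')}}$, we obtain a power of $f$ inside $\bigl((\sqrt{I})^{(B')}\bigr)^{\mu}$, hence $f\in\sqrt{I^{(\mu B')}}=\sqrt{I^{(B)}}$, as required. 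The subtlety here is that naive exponentiation of the ideal $(\sqrt{I})^{(B')}$ does not respect the prolongation operator; Lemma~\ref{lem:product} is what lets us exchange a product of prolongations for a single prolongation of the product with a controlled increase in the order.
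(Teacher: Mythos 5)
Your proposal is correct and follows essentially the same route as the paper's proof: decompose $\sqrt{I}$ into equidimensional components, apply Proposition~\ref{prop:equidimensional} to each, combine via Lemma~\ref{lem:intersection}, and pass from $\sqrt{I}$ to $I$ through the Noether exponent and Lemma~\ref{lem:product} (the paper performs the reduction to $k=\bar{k}$ at the end rather than at the start, but this is immaterial). In fact your final step, raising $(\sqrt{I})^{(B')}$ to the $\mu$-th power before invoking Lemma~\ref{lem:product}, spells out an inclusion the paper states somewhat elliptically.
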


\begin{proof}
    We will first prove the proposition for an algebraically closed $k$.
    Consider $f \in \sqrt{ I^{(\infty)} } \cap k[\bm{x}_{\bm{\alpha}}]$.
    For each $0 \leqslant i \leqslant m$, let $I_{i}$ be the radical ideal corresponding to the equidimensional component of dimension $i$ of $I$. 
Proposition~\ref{prop:equidimensional} implies that $f \in \sqrt{I_{i}^{(B(i,D_i))}}$ for every $0 \leqslant i \leqslant m$.
	Lemma~\ref{lem:intersection} implies that 
	\[
    f \in \sqrt{ \left( I_0 \cap I_1 \cap \ldots \cap I_m \right)^{(S)} } 
	= \sqrt{\left( \sqrt{I} \right)^{(S)}}, \; \text{ where } S = \sum\limits_{i = 0}^{m} B(i,D_i).
    \]
    Since $\left( \sqrt{I} \right)^{\mu} \subset I$, Lemma~\ref{lem:product} implies that $\left(\sqrt{I}\right)^{(S)} \subseteq I^{(\mu \cdot S)}$.
    Hence, $f \in \sqrt{I^{(\mu \cdot S)}} = \sqrt{I^{(B)}}$.
    
    We will finish the proof by considering the case of not necessarily algebraically closed $k$. For an ideal $J \subset k[\bm{x}_{\bm{\alpha}}]$, we denote
   $J_{\alg}=\bar{k}\otimes_k J$.
  Corollary~\ref{cor:noether_extension} implies that the Noether exponent of $I_{\alg}$ is at most $\mu$.
    Then the proposition applied to $I_{\alg} \subset \bar{k}[\bm{x}_{\bm{\alpha}}]$ implies that
    \[
      \sqrt{{(I_{\alg})}^{(\infty)}} \cap \bar{k} [\bm{x}_{\bm{\alpha}}] = \sqrt{{(I_{\alg})}^{(B)}} \cap \bar{k} [\bm{x}_{\bm{\alpha}}].\vspace{-0.1in}
    \]
    Then we have
\begin{align*}
      f \in \sqrt{I^{(\infty)}} \implies f \in \sqrt{{(I_{\alg})}^{(\infty)}} \cap k [\bm{x}_{\bm{\alpha}}] \implies f \in \sqrt{{(I_{\alg})}^{(B)}} \cap k [\bm{x}_{\bm{\alpha}}] &=\sqrt{{\big(I^{(B)}\big)}_{\alg}} \cap k [\bm{x}_{\bm{\alpha}}]\\
      &={\big(\sqrt{I^{(B)}}\big)}_{\alg} \cap k [\bm{x}_{\bm{\alpha}}] \subset \sqrt{I^{(B)}},
 \end{align*}
    where we used Lemma~\ref{lem:radical_extension}.
    Finally, $f \in \sqrt{I^{(B)}} \implies f \in \sqrt{I^{(\infty)}}$ is by definition.
    \end{proof}


\subsubsection{Bounds for elimination}\label{sec:finishingtheproofs}

\begin{proof}[Proof of Theorems~\ref{th:elimination_variables} and~\ref{thm:radical_elimination}]
Let $J$ be the ideal generated by $F$ in $k(\bm{y}_{\bm{\infty}}) [\bm{x}_{\bm{\alpha}}]$.
\begin{itemize}[leftmargin=0.5cm,itemsep=-0.05cm]
\item
  For each $i$, $0 \leqslant i \leqslant m$, let $D_i$ be the 
 degree of the equidimensional component of dimension $i$  
  of $\sqrt{J}$.
  \item 
	Let $\mu \geqslant 1$ be the Noether exponent of $J$.
    \end{itemize}
     Let 
    \begin{equation}\label{eq:Bth}
      B := \mu\cdot \sum\limits_{i = 0}^m B(i, D_i).
    \end{equation}  
    Then Proposition~\ref{prop:any} implies that
    \[
    1 \in J^{(\infty)} \iff 1 \in J^{(B)}.
    \]
    Thus,
    \begin{equation}\label{eq:elimination}
      \langle F \rangle^{(\infty)} \cap k[\bm{y}_{\bm{\infty}}] \ne\{0\} \iff 1 \in J^{(\infty)} \iff 1 \in J^{(B)} \iff \langle F \rangle^{(B)} \cap k[\bm{y}_{\bm{\infty}}] \ne\{0\} .
    \end{equation}
    
\begin{itemize}[leftmargin=0.4cm,itemsep=0.01cm]    \item\underline{Proof of Theorem~\ref{thm:radical_elimination}.} If $J$ is radical, then $\mu = 1$, so $B = \sum\limits_{i = 0}^m B(i, D_i)$.
    Then the theorem follows from~\eqref{eq:elimination}.
    
\item    \underline{Proof of Theorem~\ref{th:elimination_variables}.}
    To finish the proof, it remains to estimate $B$ in terms of $m$, $d$, and $|\bm{\alpha}|$.
    Let $d_0 := \min\limits_{f \in F} \deg_{\bm{x}} f$ and $r := |F|$.
    Therefore, $d_0 \leqslant d$.
    
        If $d = 1$, then $V(F)$ is an intersection of finitely many hyperplanes.
    Therefore, it is an irreducible variety of dimension $m$ and degree $D_m = 1$.
    Thus, $B = m + 1$.
    
    We will now assume that $d \geqslant 2$.
	By \citep[Corollary~4.6]{Jelonek}, we can bound the Noether exponent by 
	\begin{equation} \label{inequ:BoundNoetherExponent}
	\mu \leqslant d_0 d^{\min \{r, |\bm{\alpha}|\} - 1}.
	\end{equation}
	
For each $i$,    we 
will estimate
$D_i$. 
    By \cite[Lemma~3 and its proof]{JeronimoSabia}, there exist 
    $g_1, \ldots, g_{|\bm{\alpha}| - i} \in k(\bm{y}_{\bm{\infty}}) [\bm{x}_{\bm{\alpha}}]$, where 
    \begin{itemize}[leftmargin=0.5cm,itemsep=-0.05cm]
    \item $g_1$ is the polynomial of minimal degree in $F$, so $\deg g_1 = d_0$, and 
    \item $g_2, \ldots, g_{|\bm{\alpha}| - i}$ are linear combinations of elements of $F$ such that every component of $V(g_1, \ldots, g_{|\bm{\alpha}| - i})$ of dimension greater than $i$ is also a component of $V(F)$.
    \end{itemize}
    Since $V(g_1, \ldots, g_{|\bm{\alpha}|-i}) \supset V(F)$, the above implies that all components of $V(F)$ of dimension $i$ are components of $V(g_1, \ldots, g_{|\bm{\alpha}| - i})$ (but, maybe, there are some superfluous components of dimension $i$ in $V(g_1, \ldots, g_{|\bm{\alpha}| - i})$).
    Since $\deg g_{j} \leqslant d$ for all $j \geqslant 2$, \cite[(8.28) B\'ezout Inequality]{Burgisser} implies that the sum of the degrees of all components of $V(g_1, \ldots, g_{|\bm{\alpha}| - i})$ does not exceed $d_0 d^{|\bm{\alpha}| - i - 1}$.
    Hence, 
	\begin{equation} \label{inequ:BoundTheD_i}
	D_i \leqslant d_0 d^{|\bm{\alpha}| - i - 1}, \, \text{ for every } i=0, \ldots, m.
	\end{equation}		    
    By substituting \eqref{inequ:BoundNoetherExponent} and \eqref{inequ:BoundTheD_i} into \eqref{eq:Bth}, we obtain
    \begin{align} \label{inequ:TighterBound}
		B \leqslant d_0 d^{\min(|\bm{\alpha}|, r) - 1} \cdot \sum\limits_{i = 0}^{m} B\left(i, d_0 d^{|\bm{\alpha}| - i - 1} \right).
    \end{align}
     
 To achieve a simpler formula for the bound, we will replace $d_0$ by $d$.
    In particular, we have
     \begin{align} \label{inequ:N12}
		B \leqslant d^{|\bm{\alpha}|} \cdot \sum\limits_{i = 0}^{m} B\left(i, d^{|\bm{\alpha}| - i} \right) = d^{|\bm{\alpha}|} \cdot \sum\limits_{i = 0}^{m} \sum\limits_{j = 0}^i d^{(|\bm{\alpha}| - i) (2^{j + 1} - 2)}.
    \end{align} 
    Bounding the double sum by a geometric series with common ratio $\frac{1}{d^2}$ twice, we obtain, using $d\geqslant 2$,
    \[
      \sum\limits_{i = 0}^{m} \sum\limits_{j = 0}^i d^{(|\bm{\alpha}| - i) (2^{j + 1} - 2)} \leqslant \frac{d^2}{d^2 - 1} \sum\limits_{i = 0}^{m} d^{(|\bm{\alpha}| - i) (2^{i + 1} - 2)} \leqslant \left( \frac{d^2}{d^2 - 1} \right)^2 d^{(|\bm{\alpha}| - m)(2^{m + 1} - 2)} \leqslant d^{(|\bm{\alpha}| - m)(2^{m + 1} - 2) + 1}
    \]
    Plugging this bound into~\eqref{inequ:N12}, since {$m\leqslant|\bm{\alpha}|-1$}, we obtain
    \begin{equation}\label{eq:bound_final}
      B \leqslant d^{|\bm{\alpha}| + (|\bm{\alpha}| - m)(2^{m + 1} - 2) + 1} \leqslant d^{(|\bm{\alpha}| - m)2^{m + 1} + m} \leqslant d^{(|\bm{\alpha}| - m + 1)2^{m + 1}}.\qedhere
    \end{equation}
    \end{itemize}
\end{proof}

\begin{proof}[Proof of Theorem~\ref{th:FullElimination}]
By applying Proposition~\ref{prop:any} to $I = \langle F \rangle \subset k[\bm{x}_{\bm{\alpha}}, \bm{y}_{\bm{\beta}}]$, we obtain
  \[
  \sqrt{(F)^{(\infty)}} \cap k[\bm{x}_{\bm{\alpha}}, \bm{y}_{\bm{\beta}}] = \sqrt{(F)^{(B)}} \cap k[\bm{x}_{\bm{\alpha}}, \bm{y}_{\bm{\beta}}],  \qquad
    \text{ for } B = \mu  \cdot \sum\limits_{i=0}^{m} { \sum\limits_{j=0}^{i} {D_{i}^{2^{j+1}-2}} },
    \]
    where $\mu$ is the Noether exponent of $I$, and $D_i$ is the degree of the equidimensional component of $I$ of dimension  $i$.
By intersecting both sides with $k[\bm{y}_{\bm{\infty}}]$, we obtain 
\[
  \sqrt{(F)^{(\infty)}} \cap k[\bm{y}_{\bm{\beta}}]= \sqrt{(F)^{(B)}} \cap k[\bm{y}_{\bm{\beta}}].
\]
Estimating $B$ the same way we did in the proof of Theorem~\ref{th:elimination_variables} in~\eqref{inequ:TighterBound},~\eqref{inequ:N12}, and~\eqref{eq:bound_final}, we obtain $B \leqslant d^{(|\bm{\alpha}| + |\bm{\beta}| - m + 1) 2^{m + 1}}$.
\end{proof}


\subsection{ Asymptotic tightness via a lower bound}\label{sec:lowerbound}

In this section, we prove Proposition~\ref{prop:lower_bound}.
We begin with two auxiliary lemmas.

\begin{lemma}\label{lem:lower_bound}
Let $\bm{x} = (x_1, \ldots, x_n)$.  For all $g_1, \ldots, g_m \in \mathbb{C}[\hyperlink{diff_poly}{\bm{x}_{\bm{\infty}}}]$, positive integers $N$, and formal power series 
  $f_1(t), \ldots, f_n(t) \in \mathbb{C}[\![t]\!]$,
  \[
\big(\forall i\;\; g_i(f_1(t), \ldots, f_n(t)) = \mathrm{O}(t^N),\; t\to 0\big)\implies 1 \notin \bigl\langle g_1, \ldots, g_m \bigr\rangle^{\hyperlink{def_ideals}{(N - 1)}}.
  \]
\end{lemma}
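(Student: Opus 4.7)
The plan is to substitute the power series $f_i(t)$ into every generator and its derivatives, and read off a contradiction from the order of vanishing at $t = 0$. Concretely, let $\varphi\colon \mathbb{C}[\bm{x}_{\bm{\infty}}] \to \mathbb{C}[\![t]\!]$ be the ring homomorphism defined by $\varphi(x_i^{(j)}) = f_i^{(j)}(t)$ (the $j$th derivative of $f_i$ with respect to $t$). Equipping $\mathbb{C}[\![t]\!]$ with the derivation $d/dt$, the map $\varphi$ is a differential ring homomorphism, because it sends generators of the differential algebra to the correct derivatives. Consequently, for every $g \in \mathbb{C}[\bm{x}_{\bm{\infty}}]$ and every $k \geqslant 0$, one has $\varphi(g^{(k)}) = \frac{d^k}{dt^k}\varphi(g)$, which is the only nontrivial verification needed and is essentially the multivariate chain rule.

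I would then use this to transfer the vanishing hypothesis to the prolongations. By assumption, $\varphi(g_i) \in t^N \mathbb{C}[\![t]\!]$ for every $i$. Differentiating $k$ times in $t$, I get $\varphi(g_i^{(k)}) \in t^{N-k}\mathbb{C}[\![t]\!]$ for every $0 \leqslant k \leqslant N - 1$; in particular, $\varphi(g_i^{(k)})$ vanishes at $t = 0$ for all such $k$.

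Finally, I would argue by contradiction. Suppose $1 \in \langle g_1, \ldots, g_m\rangle^{(N-1)}$. Then, by definition of $\langle\cdot\rangle^{(N-1)}$, there exist polynomials $h_{i,k} \in \mathbb{C}[\bm{x}_{\bm{\infty}}]$ such that
\[
1 = \sum_{i=1}^{m} \sum_{k=0}^{N-1} h_{i,k}\, g_i^{(k)}.
\]
Applying $\varphi$ yields the identity in $\mathbb{C}[\![t]\!]$
\[
1 = \sum_{i=1}^{m} \sum_{k=0}^{N-1} \varphi(h_{i,k})\, \varphi\bigl(g_i^{(k)}\bigr),
\]
where each $\varphi(h_{i,k}) \in \mathbb{C}[\![t]\!]$ and each $\varphi(g_i^{(k)}) \in t\,\mathbb{C}[\![t]\!]$ by the previous step. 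Hence every term on the right-hand side lies in $t\,\mathbb{C}[\![t]\!]$, so evaluating at $t = 0$ gives $1 = 0$, a contradiction.

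The proof is essentially routine; the only place requiring a moment of care is verifying that $\varphi$ is a differential homomorphism (equivalently, the chain rule for the substitution), since this is what lets me convert algebraic differentiations of $g_i$ into analytic differentiations of $\varphi(g_i)$, and is what controls the order of vanishing of the prolonged generators at $t = 0$.
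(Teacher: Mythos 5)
Your proposal is correct and follows essentially the same argument as the paper: both reduce the claim to showing that all generators $g_i^{(k)}$, $k \leqslant N-1$, lie in the kernel of the evaluation-at-$0$ homomorphism, via the chain rule and the observation that the $k$th derivative of an $\mathrm{O}(t^N)$ series vanishes at $t=0$ for $k<N$. The only cosmetic difference is that you factor the map through $\mathbb{C}[\![t]\!]$ before evaluating at $t=0$, whereas the paper composes these into a single homomorphism to $\mathbb{C}$.
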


\begin{proof}
  Consider the $\mathbb{C}$-algebra homomorphism $\varphi \colon \mathbb{C}[\bm{x}_{\bm{\infty}}] \to \mathbb{C}$ defined by $\varphi\big(x_i^{(j)}\big) := f_i^{(j)}(0)$.
   Then $1 \notin \operatorname{Ker}\varphi$.
  We will prove the lemma by showing that
  \[
    \bigl\langle g_1, \ldots, g_m \bigr\rangle^{(N - 1)} \subset \operatorname{Ker}\varphi.
  \]
  The chain rule implies that, for every $i$, $1 \leqslant i \leqslant m$, and $j \geqslant 0$,
  \[
    g_i^{(j)}(x_1, \ldots, x_n) |_{x_i = f_i(t)} = \bigl( g_i(f_1(t), \ldots, f_n(t)) \bigr)^{(j)}.
  \]
  Then $\varphi(g_i^{(j)})$ is equal to the value of $\bigl( g_i(f_1(t), \ldots, f_n(t)) \bigr)^{(j)}$ at $t = 0$.
  For every $j < N$ and $i$, since 
  \[
  \bigl( g_i(f_1(t), \ldots, f_n(t)) \bigr)^{(j)} = \bigl( \mathrm{O}(t^N) \bigr)^{(j)} = \mathrm{O}(t^{N - j}),\; t\to 0,
  \]
  the value of  $\bigl( g_i(f_1(t), \ldots, f_n(t)) \bigr)^{(j)}$ at $t = 0$ is zero.
  This proves the lemma.
\end{proof}

\begin{lemma}\label{lem:diff_oper}
  Let $d$ be a positive integer and $P(x, y) \in \mathbb{C}[x, y]$ be a polynomial of degree at most $d$.
  If 
  \[
  P(t, e^t) = \mathrm{O}(t^{B + 1}),\; t \to 0, \text{ where } B = \binom{d + 2}{2} - 1,
  \]
  then $P(t, e^t) = 0$, and so $P$ is the zero polynomial.
\end{lemma}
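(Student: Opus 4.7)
The plan is to exhibit an explicit constant-coefficient linear ODE of order $B+1$ annihilating the entire function $f(t) := P(t,e^t)$, and then invoke uniqueness of solutions to this ODE given the prescribed initial data.

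First, I would write $P(x,y) = \sum_{j=0}^{d} p_j(x)\, y^j$, where $p_j(x) \in \mathbb{C}[x]$ satisfies $\deg p_j \leqslant d-j$ because $\deg P \leqslant d$. Substituting $y = e^t$, $x = t$ gives
\[
f(t) = \sum_{j=0}^{d} p_j(t)\, e^{jt}.
\]
The key elementary identity, proved by a one-line induction on $k$, is that for the operator $D = \tfrac{d}{dt}$ and any smooth $g$,
\[
(D - j)^k\big(g(t)\, e^{jt}\big) = g^{(k)}(t)\, e^{jt}.
\]
Applied with $g = p_j$ and $k = d-j+1 > \deg p_j$, this shows $(D-j)^{d-j+1}$ annihilates $p_j(t)\,e^{jt}$.

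Consequently, the differential operator
\[
L := \prod_{j=0}^{d} (D - j)^{\,d-j+1}
\]
annihilates $f(t)$. Its order is $\sum_{j=0}^{d}(d-j+1) = \sum_{k=1}^{d+1} k = \binom{d+2}{2} = B+1$.

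Since $L$ has constant coefficients, the space of smooth solutions to $L y = 0$ on a neighborhood of $0$ has dimension exactly equal to the order of $L$, namely $B+1$, and any such solution is uniquely determined by the initial data $y(0), y'(0), \ldots, y^{(B)}(0)$. By the hypothesis $f(t) = \mathrm{O}(t^{B+1})$ as $t \to 0$, all of these initial values vanish, so $f \equiv 0$ by uniqueness. This is exactly $P(t, e^t) = 0$.

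There is no serious obstacle: the only nontrivial ingredient is the identification of $L$ as an annihilator of $f$ of the right order, and that follows by direct inspection once $P$ is decomposed by powers of $y$. The main thing to double-check is the arithmetic $\sum_{j=0}^{d}(d-j+1) = \binom{d+2}{2}$, which matches precisely the threshold $B+1$ appearing in the statement.
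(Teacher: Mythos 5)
Your proof is correct and follows essentially the same route as the paper: the annihilating operator $L = \prod_{j=0}^{d}(D-j)^{d-j+1}$ you construct is exactly the operator used in the paper's proof, and the conclusion via uniqueness of solutions of a constant-coefficient ODE given the first $B+1$ Taylor coefficients is the same. Your write-up just makes the intermediate identity $(D-j)^k\bigl(g(t)e^{jt}\bigr) = g^{(k)}(t)e^{jt}$ and the degree count $\deg p_j \leqslant d-j$ explicit, which the paper leaves to the reader.
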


\begin{proof}
The  function $P(t, e^t)$ is a $\mathbb{C}$-linear combination of $\{t^i e^j\:|\:0 \leqslant i + j \leqslant d\}$.
  All these functions are annihilated by the following differential operator
    \[
  D := \left( \tfrac{\partial}{\partial t} \right)^{d + 1} \left( \tfrac{\partial}{\partial t} - 1 \right)^{d} \left( \tfrac{\partial}{\partial t} - 2 \right)^{d - 1} \cdot\ldots\cdot \left( \tfrac{\partial}{\partial t} - d \right)   
  \]
  of order $B + 1$ with constant coefficients, so $D (P(t, e^t)) = 0$.
  Every solution of $D$ is uniquely determined by its first $B + 1$ Taylor coefficients and $0$ is a solution of $D$, so $P(t, e^t) = 0$.
\end{proof}

\begin{proof}[Proof of Proposition~\ref{prop:lower_bound}]

  We will show that
  \begin{equation}\label{eq:inconsistent}
    1 \in \big\langle x' - 1, y' - y, P(x, y) \big\rangle^{(\infty)}
   \end{equation}
  holds for every $P(x, y) \in \mathbb{C}[x, y]$ such that $P(x, 0) \neq 0$.
  Since  system~\eqref{eq:inconsistent} has constant coefficients, it is consistent if and only if it has a solution in $\mathbb{C}[\![t]\!]$ (follows from~\cite[Proposition~3.2 and Corollary~3.6]{Pogudin:DiffNoether}).
  Every solution of $x' = 1, \; y' = y$ in $\mathbb{C}[\![t]\!]$ is of the form $x(t) = t + a$, $y = be^t$ for some $a, b \in \mathbb{C}$.
  If $b \neq 0$, then $P(x(t), y(t)) \neq 0$ due to the algebraic independence of $t$ and $e^t$ over $\mathbb{C}$.
  If $b = 0$, then $x(t)$ is a root of the nonzero polynomial $P(x, 0)$ with constant coefficients. This is impossible.
  
 For $0 \leqslant i + j \leqslant d$, let $f_{i, j} \in \mathbb{Q}[t]$ be the truncation of the power series $t^i \cdot e^{jt}$ to the degree $B$.
  If the polynomials $\{f_{i, j}\:|\:0 \leqslant i + j \leqslant d\}$ were linearly dependent over $\mathbb{Q}$,
  there would exist $\lambda_{i, j} \in \mathbb{Q}$ not all zeros for $0 \leqslant i + j \leqslant d$ such that
  \[
  f := \sum\limits_{0 \leqslant i + j \leqslant d} \lambda_{i, j} t^i e^{jt} = \mathrm{O}(t^{B + 1}),\ \ t\to 0.
  \] 
  The power series $f$ is nonzero due to the algebraic independence of $t$ and $e^t$.
  On the other hand, Lemma~\ref{lem:diff_oper} implies that $f = 0$.
  The obtained contradiction implies that $\{f_{i, j}\:|\:0 \leqslant i + j \leqslant d\}$ are linearly independent over $\mathbb{Q}$.
  
  Since there are $B + 1$ of the $f_{i, j}$, they form a basis of the $\mathbb{Q}$-vector space of polynomials of degree at most $B$.
  Thus, there exist $\mu_{i, j} \in \mathbb{Q}$ for $0 \leqslant i + j \leqslant d$ such that
  \[
  g(t) := \sum\limits_{0 \leqslant i + j \leqslant d} \mu_{i, j} t^i e^{jt} = t^B + \mathrm{O}(t^{B + 1}),\ \ t\to 0.
  \]
  We define\vspace{-0.05in}
  \[
    P(x, y) := \sum\limits_{0 \leqslant i + j \leqslant d} \mu_{i, j} x^i y^j.
  \]  
  We claim that $P(x, y)$ is irreducible.
  Assume the contrary, so $P(x, y) = P_1(x, y) P_2(x, y)$, where $\deg P_1 = d_1\geqslant 1$ and $\deg P_2 = d_2\geqslant 1$.
  Then there exist integers $B_1$ and $B_2$ such that $B_1 + B_2 = B$ and \vspace{-0.05in}
  \[
  P_i(t, e^t) = t^{B_i} + \mathrm{O}(t^{B_i + 1}), \; t \to 0\ \text{ for } i = 1, 2.\vspace{-0.05in}
  \]
  Since \vspace{-0.05in}\[B = \tfrac{(d + 3)d}{2} > \tfrac{(d_1 + 3)d_1}{2} + \tfrac{(d_2 + 3)d_2}{2},\] we have $B_i > \frac{(d_i + 3)d_i}{2}$ for some $i$, say for $i = 1$.
  Then Lemma~\ref{lem:diff_oper} applied to polynomial $P_1$ of degree $d_1$ implies that $P_1(t, e^t) = 0$.
  The obtained contradiction proves that $P(x, y)$ is irreducible.
  
Since $P(x, y)$ is irreducible, $P(x, 0)$ is not zero.
  This implies that $x' - 1 = y' - y = P(x, y) = 0$ is inconsistent.
  Lemma~\ref{lem:lower_bound} applied to\vspace{-0.07in} \[g_1 = x' - 1,\ g_2 = y' - y,\ g_3 = P(x, y),\ f_1(t) = t,\ f_2(t) = e^t,\ \text{and}\ N = B\vspace{-0.1in}\] implies \vspace{-0.05in}
  \[
    1 \not\in \big\langle x' - 1, y' - y, P(x, y) \big\rangle^{(B - 1)}.\qedhere
  \]
\end{proof}

\begin{example}
  Based on the proof of Proposition~\ref{prop:lower_bound}, one can generate polynomial $P$ using only linear algebra.
  For example, for $d = 2$, we obtain\vspace{-0.05in}
  \[
    P(x, y) = -2 x^2 - 8 x y + y^2 - 10 x + 16 y - 17.\vspace{-0.05in}
  \]
\end{example}


\begin{corollary}
 The bound in Theorem~\ref{thm:radical_elimination} is asymptotically tight for $m\leqslant 1$.
\end{corollary}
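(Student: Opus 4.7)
The plan is to combine the upper bound in Theorem~\ref{thm:radical_elimination} with the lower bound in Proposition~\ref{prop:lower_bound}, after verifying that the degree parameters in the two statements refer to the same quantity.

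First I would handle $m = 0$. In this case, Theorem~\ref{thm:radical_elimination} gives the trivial upper bound $B \leqslant 1$, which cannot be improved in general: for any maximal ideal whose radical differential closure contains $1$ but whose purely polynomial closure is proper (e.g.\ $\langle x,\, x' - 1\rangle \subset \mathbb{Q}[x, x']$), exactly one prolongation is required, so the bound is already achieved.

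The substantive case is $m = 1$. Theorem~\ref{thm:radical_elimination} specializes here to $B \leqslant D_1^2 + 1 + D_0^0 = O(D_1^2)$. To match this, I would invoke Proposition~\ref{prop:lower_bound} on the witness system $F = \{x' - 1,\, y' - y,\, P(x, y)\}$, with $P$ an irreducible polynomial of degree $d$, and verify that the ideal $\langle F\rangle \subset \mathbb{Q}[x, x', y, y']$ is prime, radical, of dimension $1$, and has degree $D_1 = d$. The first two generators form a regular sequence that solves $x' = 1$ and $y' = y$ algebraically, and cutting with the irreducible curve $P(x, y) = 0$ lowers the dimension by one more; since the projection onto the $(x, y)$-coordinates restricts to an isomorphism onto $V(P)$, the equidimensional degree in dimension $1$ equals $\deg P = d$, with no other primary components present.

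With this identification $D_1 = d$ in hand, Proposition~\ref{prop:lower_bound} forces
\[
B \;\geqslant\; \binom{D_1 + 2}{2} - 1 \;=\; \tfrac{D_1(D_1 + 3)}{2} \;=\; \Omega(D_1^2)
\]
on the witness family, which matches the upper bound $O(D_1^2)$ from Theorem~\ref{thm:radical_elimination} up to a constant factor of $2$. The only real obstacle is the bookkeeping step that identifies the lower-bound degree $d$ with the geometric degree $D_1$ that appears in the upper bound; once this dictionary is in place, the asymptotic tightness follows immediately from the two statements.
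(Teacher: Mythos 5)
Your proposal is correct and follows essentially the same route as the paper: the $m=0$ case is witnessed by $\langle x, x'-1\rangle$, and the $m=1$ case combines the witness family of Proposition~\ref{prop:lower_bound} with the observation that $\mathbb{C}[x,y,x',y']/\langle x'-1,\,y'-y,\,P\rangle \cong \mathbb{C}[x,y]/\langle P\rangle$, so the ideal is prime (hence radical) of dimension $1$ and degree $D_1 = \deg P$, making the quadratic lower bound match the quadratic upper bound up to a constant factor.
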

\begin{proof}
Let $I = \bigl\langle x' - 1, y' - y, P(x, y) \bigr\rangle$. We have\vspace{-0.05in} \[A := \mathbb{C}[x,y,x',y']/I\cong \mathbb{C}[x,y]/\langle P(x,y)\rangle.\vspace{-0.05in}\] Since $P$ is irreducible, $A$ is an integral domain, and so the ideal  $I$  is prime and, therefore, radical.
Finally, $m = \dim (F) = 1$ and $D_1 = \deg I=\deg P$, and we have
\vspace{-0.05in}
\[
  D_1^2 + 1 \leqslant 2\cdot\tfrac{D_1(D_1+3)}{2} .\vspace{-0.05in}
\]
For $m=0$, one can take the system $x=0$, $x'-1=0$. Then $1 \in \langle x,x'-1\rangle^{(1)}$ and $1 \notin \langle x,x'-1\rangle$.
\end{proof}


\vspace{-0.2in}
\section{Dominance of projections of affine varieties and elimination in DAEs}
\label{sec:probability}\vspace{-0.08in}
In this section we will address the problem of verifying whether the projection of an affine variety to an affine subspace is Zariski dense  by analyzing the fibers of the projection. We will then connect this with an algorithm that verifies whether it is possible to eliminate 
a set of unknowns  \vspace{-0.05in}
\begin{itemize}[leftmargin=0.5cm,itemsep=-0.05cm]
\item in a system of  polynomial equations (see Section~\ref{subsec:probability_polynomial}) and
\item  as a consequence of our main result, in a system of DAEs (see Section~\ref{subsec:probability_dae}).
\end{itemize}
\vspace{-0.23in}
\subsection{Dominance of projections of affine varieties}
\label{subsec:probability_polynomial}
\vspace{-0.05in}
The possibility of elimination of a subset of unknowns for polynomial systems is equivalent to the dominance of the corresponding projection of affine varieties.
Verifying whether the  projection of an affine variety to an affine subspace is Zariski dense can be done by, for example, calculating Gr\"obner bases with respect to elimination monomial orderings. However, this could be very time-consuming. 
One can try the following naive approach:
\begin{itemize}[leftmargin=0.5cm,itemsep=-0.05cm]
\item Consider the affine variety \[xy-1=0,\]
whose projection to the $y$-line is dominant. 
What if we consider the fiber of the projection over, say, $y = a$? Note that $xa-1 = 0$ defines a non-empty variety 
if and only if $a \ne 0$.
\item Consider the affine variety
\[ x+y=0,\ \ x=0,
\]
whose projection to the $y$-plane is the point $\{0\}$, and so is not dominant. What if we again consider the fiber over $y = a$? In this case, $x+a=0$, $x=0$ defines an empty variety if and only if $a \ne 0$.
\end{itemize}
What we see in each of the above examples that, for all $a\ne 0$,
\begin{equation}\label{eq:equivalencedomcons}\text{the projection to the $y$-line is {\em dominant}} \iff \text{the fiber over $a$ of the projection is {\em nonempty}}.\end{equation}
Hence, for every field $k$ and every finite subset $S \subset k$, \[|\{a\in S\:|\:\eqref{eq:equivalencedomcons}\ \text{holds}\}| \geqslant |S|-1. \]
We will now show how to generalize this idea to arbitrary affine varieties bounding the size of the exceptional set of points $\bm{a}$ in a finite grid in
$\mathbb{A}^r$ for which the dominance of a projection of an affine variety to $\mathbb{A}^r$ is not equivalent to the emptiness of the fiber over $\bm{a}$.

\begin{proposition}\label{prop:probelim}
  For every
  \begin{itemize}[leftmargin=0.5cm,itemsep=-0.05cm]
  \item affine variety $X \subset {\mathbb{A}^q}\times{\mathbb{A}^r}$  and
  \item  finite subset $S \subset k$,
   \end{itemize} 
  the number of points $\bm{a}=(a_1,\ldots,a_r)\in S^r\subset\mathbb{A}^r$ such that
  \[
  \text{the projection of $X$ to $\mathbb{A}^r$ is  dominant} \iff \text{the fiber over $\bm{a}$ of the projection is  nonempty}
  \]
  is at least
  \[N:=\left(1-\frac{\deg X}{|S|}\right)\cdot |S|^r.\]
\end{proposition}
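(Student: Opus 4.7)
My plan is to split on whether the projection $\pi\colon X \to \mathbb{A}^r$ is dominant, and in each case bound the number of \emph{bad} points in $S^r$ (those for which the stated equivalence fails) by $\deg X \cdot |S|^{r - 1}$, which equals $(\deg X/|S|) \cdot |S|^r$. Taking complements in $S^r$ then yields the stated lower bound. Both bounds will ultimately rely on a Schwartz--Zippel-type inequality that any proper closed subvariety $W \subsetneq \mathbb{A}^r$ of degree $D$ satisfies $|S^r \cap W| \leqslant D \cdot |S|^{r - 1}$, which I would prove by a straightforward induction on $r$, reducing to the hypersurface case where it is classical.

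If $\pi$ is not dominant, then the bad points are exactly the $\bm{a} \in S^r$ with $\pi^{-1}(\bm{a}) \ne \emptyset$, hence those lying in $\pi(X) \subseteq \overline{\pi(X)}$. Since $\overline{\pi(X)}$ is a proper closed subvariety of $\mathbb{A}^r$ whose degree is bounded by $\deg X$ via \cite[Lemma~2]{Heintz}, the Schwartz--Zippel-type bound above finishes this case immediately.

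The dominant case is the main obstacle. Here the bad points form $S^r \cap Z$, with $Z := \mathbb{A}^r \setminus \pi(X)$, and $\pi(X)$ is only constructible, not closed, so one cannot directly apply the preceding bound to it. My approach is to exhibit a proper closed subvariety $W \subsetneq \mathbb{A}^r$ of degree at most $\deg X$ containing $Z$. I would first pick an irreducible component $X_0 \subseteq X$ whose projection is still dominant (which exists since $\pi$ is), and then cut $X_0$ by $\dim X_0 - r$ generic hyperplanes defined by linear forms in the $\bm{x}$-variables only. This produces a subvariety $X_0'$ of pure dimension $r$ and degree $\deg X_0$ (exactly, by genericity and the definition of degree via linear sections) whose projection to $\mathbb{A}^r$ is generically finite and still dominant. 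An elimination- or Chow-form-theoretic argument, combined with the degree estimates from~\cite{Heintz}, should then produce a hypersurface $W \subsetneq \mathbb{A}^r$ of degree at most $\deg X_0 \leqslant \deg X$ containing $\mathbb{A}^r \setminus \pi(X_0') \supseteq Z$, after which the Schwartz--Zippel bound concludes.

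The technical crux lies precisely in this last step: Chevalley's theorem alone only guarantees that \emph{some} such hypersurface exists, and pinning its degree down by $\deg X_0$ without parasitic factors requires carefully exploiting the fact that $X_0'$ is a generic linear section, not an arbitrary variety of dimension $r$, so that its Chow form (equivalently, its $u$-resultant) witnesses the missing image as a hypersurface of the expected degree.
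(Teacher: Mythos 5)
Your overall strategy coincides with the paper's: split on dominance, bound the bad set $S^r\cap W$ for a proper closed $W\subsetneq\mathbb{A}^r$ of degree at most $\deg X$ via a Schwartz--Zippel-type count, and take complements. The non-dominant half is complete and is exactly what the paper does (it realizes $W$ as a hypersurface $V(P_1)$ with $\deg P_1\leqslant\deg\overline{\pi(X)}\leqslant\deg X$ using \cite[Lemma~2 and Proposition~3]{Heintz} and then applies \cite[Proposition~98]{Zippel}).

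The gap is in the dominant case, and you have located it yourself: everything hinges on producing a proper closed subvariety of $\mathbb{A}^r$ of degree at most $\deg X$ containing $\mathbb{A}^r\setminus\pi(X)$, and your proposal defers precisely this step to an unexecuted ``elimination- or Chow-form-theoretic argument'' that ``should'' work. That statement is not a consequence of anything you have established; it is exactly the content of \cite[Lemma~12]{HOPY}, which the paper invokes (applied to a single irreducible component $Z\subseteq X$ with $\overline{\pi(Z)}=\mathbb{A}^r$, giving $Y\subsetneq\mathbb{A}^r$ with $\deg Y\leqslant\deg Z$ and $\pi^{-1}(p)\cap Z\neq\varnothing$ for all $p\notin Y$). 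Your sketch of how one might prove it is in the right spirit but has real obstructions left unaddressed: (i) a choice of $\dim X_0-r$ hyperplanes in the $\bm{x}$-variables that is generic for the generic fiber need not meet \emph{every} nonempty fiber --- the fibers vary, and an affine linear section can miss an affine variety entirely (the intersection may escape to infinity) --- so the uniformity over all $p\in\pi(X_0)$ is not automatic; and (ii) even granting a generically finite surjection $X_0'\to\mathbb{A}^r$ off a proper closed set, bounding the degree of that exceptional set by $\deg X_0$ without parasitic factors is the actual work. Since the hardest step of the dominant case is asserted rather than proved, the argument as written is incomplete; citing or reproving the lemma from \cite{HOPY} would close it.
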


\begin{proof}
  Let $\pi \colon \mathbb{A}^q\times\mathbb{A}^r\to \mathbb{A}^r$ be the projection. 
  Assume that $\overline{\pi(X)} \neq \mathbb{A}^{r}$.
 \cite[Lemma~2]{Heintz} implies that $\deg \overline{\pi(X)} \leqslant \deg X$, so there exists a polynomial $P_1 \in k[y_1, \ldots, y_r]$ of degree at most $\deg X$~\cite[Proposition~3]{Heintz} such that $\overline{\pi(X)} \subset V(P_1)$.
  Thus, \[
  P_1(\bm{a}) \neq 0 \implies \pi^{-1}(\bm{a}) \cap X = \varnothing.
  \]
 Due to the Demillo-Lipton-Schwartz-Zippel lemma (see~\cite[Proposition~98]{Zippel}), $P_1(\bm{a}) \neq 0$ for at least 
  \[
    \left(1 - \frac{\deg P_1}{|S|}\right)\cdot|S|^r \geqslant N
  \]
  many points 
   $\bm{a}\in S^r$.
  Assume that $\overline{\pi(X)} = \mathbb{A}^{r}$.
  Then there exists an irreducible component $Z \subset X$ such that $\overline{\pi(Z)} = \mathbb{A}^r$.
  \cite[Lemma~4.4]{HOPY} implies that there exists a proper subvariety $Y \subset \mathbb{A}^r$ such that
  \begin{itemize}[leftmargin=0.5cm,itemsep=-0.05cm]
    \item $\deg Y \leqslant \deg Z$;
    \item for every $p \in \mathbb{A}^r \setminus Y$, $\pi^{-1}(p) \cap Z \neq \varnothing$.
  \end{itemize}
  Then there exists a polynomial $P_2 \in k[y_1, \ldots, y_r]$ with $\deg P_2 \leqslant \deg Y$~\cite[Proposition~3]{Heintz} such that 
  \[
  P_2(\bm{a}) \neq 0 \implies \bm{a} \notin Y \implies \pi^{-1}(\bm{a}) \cap Z \neq \varnothing \implies \pi^{-1}(\bm{a}) \cap X \neq \varnothing. 
  \]
  Due to~\cite[Proposition~98]{Zippel} again, $P_2(\bm{a}) \neq 0$ for at least
  \[\left(1 - \frac{\deg P_1}{|S|}\right)\cdot|S|^r \geqslant N\vspace{-0.05in}
  \]
  many points  
  $\bm{a}\in S^r$.
\end{proof}

\vspace{-0.12in}
\subsection{Connection to elimination of unknowns in polynomial systems and in DAEs}
\label{subsec:probability_dae}

By the B\'ezout theorem, Proposition~\ref{prop:probelim} can be restated as follows.
\begin{proposition}\label{prop:probelimalg}
  Let\vspace{-0.03in}
  \begin{itemize}[leftmargin=0.5cm,itemsep=-0.05cm]
  \item $f_1, \ldots, f_\ell \subset k[x_1, \ldots, x_q, y_1, \ldots, y_r]$ be polynomials, $\deg f_i\leqslant d$,
  \item
  $0 < p < 1$ be a real number,
  \item  $S \subset k$ with $|S| = \left\lceil\frac{d^{q + r}}{1 - p}\right\rceil
  $,
  \item
   $a_1, \ldots, a_r$ be elements randomly, independently, and uniformly sampled from $S$,
   \item $g_i := f_i|_{y_1 = a_1, \ldots, y_r = a_r}$, $1 \leqslant i \leqslant \ell$.
   \end{itemize}
  Then\vspace{-0.04in}
  \[
  \langle f_1, \ldots, f_\ell \rangle \cap k[y_1, \ldots, y_r] \neq \{0\} \iff 1 \in \langle g_1, \ldots, g_\ell\rangle   
  \vspace{-0.04in}
  \]
  with probability at least $p$.
\end{proposition}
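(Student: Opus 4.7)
My plan is to reduce the statement directly to Proposition~\ref{prop:probelim} by translating both sides of the claimed equivalence into geometric properties of the projection, and then to bound the degree of the relevant variety via B\'ezout so that the prescribed size of $S$ yields the required probability.

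First, let $X\subset \mathbb{A}^q\times\mathbb{A}^r$ be the affine variety defined by $f_1 = \ldots = f_\ell = 0$, and let $\pi\colon\mathbb{A}^q\times\mathbb{A}^r\to\mathbb{A}^r$ be the projection onto the $\bm y$-coordinates. I would translate the two algebraic conditions as follows. By elimination theory, the ideal $\langle f_1,\ldots,f_\ell\rangle\cap k[y_1,\ldots,y_r]$ is nonzero if and only if $\overline{\pi(X)}\neq\mathbb{A}^r$, i.e.\ $\pi$ is \emph{not} dominant. On the other hand, by the Hilbert Nullstellensatz, $1\in\langle g_1,\ldots,g_\ell\rangle$ if and only if the specialized system has no common zero in $\overline{k}^q$, which is exactly the condition that the fiber $\pi^{-1}(\bm a)\cap X$ is empty. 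Hence the equivalence to be proved (for a given $\bm a$) is precisely the equivalence
\[
\overline{\pi(X)}=\mathbb{A}^r \iff \pi^{-1}(\bm a)\cap X\neq\varnothing
\]
considered in Proposition~\ref{prop:probelim}.

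Next, I would bound the degree of $X$. Since each $f_i$ has total degree at most $d$ and $X\subset\mathbb{A}^{q+r}$, the B\'ezout inequality (for example, \cite[(8.28)]{Burgisser}, cited earlier in the paper) gives $\deg X\leqslant d^{q+r}$. Proposition~\ref{prop:probelim} then guarantees that the number of tuples $\bm a\in S^r$ for which the desired equivalence holds is at least
\[
\left(1-\frac{\deg X}{|S|}\right)\cdot|S|^r \;\geqslant\; \left(1-\frac{d^{q+r}}{|S|}\right)\cdot|S|^r.
\]

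Finally, I would check that the prescribed cardinality $|S|=\bigl\lceil d^{q+r}/(1-p)\bigr\rceil$ yields $d^{q+r}/|S|\leqslant 1-p$, so the fraction of ``good'' tuples $\bm a$ in $S^r$ is at least $1-(1-p)=p$. Since $\bm a$ is sampled uniformly and independently from $S^r$, the desired equivalence holds with probability at least $p$. The main (and essentially only) subtlety here is matching the two algebraic conditions to the geometric ones used in Proposition~\ref{prop:probelim}; once that dictionary is set up, the rest is a one-line application of B\'ezout and the definition of $|S|$.
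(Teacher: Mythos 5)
Your proposal follows the paper's proof essentially step for step: translate ``elimination ideal nonzero'' into non-dominance of $\pi$ and ``$1\in\langle g_1,\ldots,g_\ell\rangle$'' into emptiness of the fiber over $\bm a$, apply Proposition~\ref{prop:probelim}, and check that $|S|=\lceil d^{q+r}/(1-p)\rceil$ forces $\deg X/|S|\leqslant 1-p$. The one place where your justification is too quick is the bound $\deg X\leqslant d^{q+r}$. The B\'ezout inequality you cite, \cite[(8.28)]{Burgisser}, bounds the degree of an intersection by the product of the degrees of the pieces, so applied to the $\ell$ hypersurfaces $V(f_i)$ it gives $\deg V(f_1,\ldots,f_\ell)\leqslant d^{\ell}$, which exceeds $d^{q+r}$ whenever $\ell>q+r$; it does not by itself yield ``$d$ to the ambient dimension.'' The paper closes exactly this gap by first invoking \cite[Lemma~3]{JeronimoSabia} to replace $f_1,\ldots,f_\ell$ by $q+r$ suitable linear combinations $h_1,\ldots,h_{q+r}$ whose common zero set contains $V(f_1,\ldots,f_\ell)$ and shares its components in the relevant dimensions, and only then applying B\'ezout to these $q+r$ polynomials of degree at most $d$. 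With that substitution (or any other proof of the standard fact that a variety in $\mathbb{A}^{q+r}$ cut out by degree-$\leqslant d$ polynomials has degree at most $d^{q+r}$), your argument is complete; the Nullstellensatz dictionary for the fibers, the elimination-ideal dictionary for dominance, and the final arithmetic all match the paper.
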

\begin{proof}
  By~\cite[Lemma~3]{JeronimoSabia}, there exist $h_1, \ldots, h_{q + r}$, linear combinations of $f_1, \ldots, f_\ell$, such that $X := V(h_1, \ldots, h_{q + r})$ and $Y := V(f_1, \ldots, f_\ell)$ have the same prime components of dimension~$> 1$.
  Thus, $\deg Y \leqslant \deg X$ and the B\'ezout inequality implies that $\deg X \leqslant d^{q + r}$.
  The proposition follows from Proposition~\ref{prop:probelim} applied to the variety $Y$.
\end{proof}

As a direct consequence, we obtain a Monte Carlo algorithm that verifies if an elimination of unknowns in a system of polynomial equations is possible with probability at least $p$. 
A deterministic algorithm based on similar geometric considerations was designed in~\cite{RSV2018}.
Since degrees of polynomials do not increase under differentiation, using our main result (see Section~\ref{subsec:main_results}), this can be used in a (deterministic or randomized) elimination algorithm for DAEs by\vspace{-0.06in}
\begin{itemize}[leftmargin=0.5cm,itemsep=-0.05cm]
\item calculating the data from the appropriate statements of the main results and then 
\item iterating differentiation and 
(deterministic or randomized) polynomial elimination successively until either an elimination is discovered or the bound from the appropriate main result is reached. 
\end{itemize}\vspace{-0.07in}
Our implementation of a randomized version as well as of a deterministic version is available at~\url{https://github.com/pogudingleb/DifferentialElimination.git}.
\section*{Acknowledgments}
The authors are grateful to S.~Gorchinskiy, H.~Hong, R. Hoobler, T. Scanlon,  M.F.~Singer,  and referees for their suggestions.
This work was partially supported by the NSF grants CCF-0952591,  CCF-1563942, DMS-1606334, DMS-1760448, DMS-1853650, DMS-1853482 by the NSA grant \#H98230-15-1-0245, by CUNY CIRG \#2248, by PSC-CUNY grants \#69827-00 47 and 60098-00 48, by the Austrian Science Fund FWF grant Y464-N18, by the strategic program ``Innovatives O\" O 2020'' by the Upper Austrian Government.

\small
\setlength{\bibsep}{1.65pt}
\bibliographystyle{abbrvnat}
\bibliography{bibdata}

\begin{thebibliography}{54}
\providecommand{\natexlab}[1]{#1}
\providecommand{\url}[1]{\texttt{#1}}
\expandafter\ifx\csname urlstyle\endcsname\relax
  \providecommand{\doi}[1]{doi: #1}\else
  \providecommand{\doi}{doi: \begingroup \urlstyle{rm}\Url}\fi

\bibitem[Bernardi and Bousquet-M{\'e}lou(2017)]{BBM2017}
O.~Bernardi and M.~Bousquet-M{\'e}lou.
\newblock Counting coloured planar maps: Differential equations.
\newblock \emph{Communications in Mathematical Physics}, 354\penalty0
  (1):\penalty0 31--84, 2017.
\newblock URL \url{http://dx.doi.org/10.1007/s00220-017-2906-x}.

\bibitem[Binyamini(2017)]{Gal}
G.~Binyamini.
\newblock Bezout-type theorems for differential fields.
\newblock \emph{Compositio Mathematica}, 153\penalty0 (4):\penalty0 867--888,
  2017.
\newblock URL \url{http://doi.org/10.1112/S0010437X17007035}.

\bibitem[Boulier(2007)]{Boulier2007}
F.~Boulier.
\newblock Differential elimination and biological modelling.
\newblock \emph{Gr{\"o}bner bases in symbolic analysis}, 2:\penalty0 109--137,
  2007.

\bibitem[Boulier et~al.(2009)Boulier, Ollivier, Lazard, and Petitot]{BLOP}
F.~Boulier, F.~Ollivier, D.~Lazard, and M.~Petitot.
\newblock Computing representations for radicals of finitely generated
  differential ideals.
\newblock \emph{Applicable Algebra in Engineering, Communication and
  Computing}, 20\penalty0 (1):\penalty0 73--121, 2009.
\newblock URL \url{http://dx.doi.org/10.1007/s00200-009-0091-7}.

\bibitem[Bourbaki(1990)]{Bourbaki:Algebra2}
N.~Bourbaki.
\newblock \emph{Algebra. {II}. {C}hapters 4--7}.
\newblock Elements of Mathematics (Berlin). Springer-Verlag, Berlin, 1990.
\newblock URL \url{http://dx.doi.org/10.1007/978-3-642-61698-3}.

\bibitem[B{\"u}rgisser et~al.(1997)B{\"u}rgisser, Clausen, and
  Shokrollahi]{Burgisser}
P.~B{\"u}rgisser, P.~Clausen, and A.~Shokrollahi.
\newblock \emph{Algebraic Complexity Theory}.
\newblock Springer, 1997.
\newblock URL \url{http://dx.doi.org/10.1007/978-3-662-03338-8}.

\bibitem[Cayley(1847)]{Cayley1847}
A.~Cayley.
\newblock On the theory of involution in geometry.
\newblock \emph{Cambridge and Dublin Mathematical Journal}, II:\penalty0
  52--61, 1847.

\bibitem[Cayley(1848)]{Cayley1848}
A.~Cayley.
\newblock On the theory of elimination.
\newblock \emph{Cambridge and Dublin Mathematical Journal}, III:\penalty0
  116--120, 1848.

\bibitem[Cruz-Victoria et~al.(2008)Cruz-Victoria, Martínez-Guerra, and
  Rincón-Pasaye]{Juan2008}
J.~C. Cruz-Victoria, R.~Martínez-Guerra, and J.~J. Rincón-Pasaye.
\newblock On nonlinear systems diagnosis using differential and algebraic
  methods.
\newblock \emph{Journal of the Franklin Institute}, 345:\penalty0 102--118,
  2008.
\newblock URL \url{http://dx.doi.org/10.1016/j.jfranklin.2007.07.001}.

\bibitem[D'Alfonso et~al.(2014)D'Alfonso, Jeronimo, and Solern{\'o}]{AlJerSol}
L.~D'Alfonso, G.~Jeronimo, and P.~Solern{\'o}.
\newblock Effective differential {N}ullstellensatz for ordinary {DAE} systems
  with constant coefficients.
\newblock \emph{Journal of Complexity}, 30\penalty0 (5):\penalty0 588--603,
  2014.
\newblock URL \url{http://dx.doi.org/10.1016/j.jco.2014.01.001}.

\bibitem[Diop(1989)]{Diop1989}
S.~Diop.
\newblock A state elimination procedure for nonlinear systems.
\newblock In \emph{New trends in nonlinear control theory}, pages 190--198.
  Springer, 1989.
\newblock URL \url{http://dx.doi.org/10.1007/BFb0043028}.

\bibitem[Diop(1991)]{Diop1991}
S.~Diop.
\newblock Elimination in control theory.
\newblock \emph{Mathematics of control, signals and systems}, 4\penalty0
  (1):\penalty0 17--32, 1991.

\bibitem[Freitag and Le{\'o}n~S{\'a}nchez(2016)]{FreitagSanchez}
J.~Freitag and O.~Le{\'o}n~S{\'a}nchez.
\newblock Effective uniform bounds in partial differential fields.
\newblock \emph{Advances in Mathematics}, 288:\penalty0 308--336, 2016.
\newblock URL \url{http://dx.doi.org/10.1016/j.aim.2015.10.013}.

\bibitem[Freitag and Scanlon(2018)]{FreitagScanlon}
J.~Freitag and T.~Scanlon.
\newblock Strong minimality and the $j$-function.
\newblock \emph{Journal of the European Mathematical Society}, 20\penalty0
  (1):\penalty0 119--136, 2018.
\newblock URL \url{http://dx.doi.org/10.4171/JEMS/761}.

\bibitem[Gao et~al.(2013)Gao, Li, and Yuan]{GaoLiYuan2013}
X.~S. Gao, W.~Li, and C.~M. Yuan.
\newblock Intersection theory in differential algebraic geometry: generic
  intersections and the differential {C}how form.
\newblock \emph{Transactions of the American Mathematical Society},
  365:\penalty0 4575--4632, 2013.
\newblock URL \url{https://doi.org/10.1090/S0002-9947-2013-05633-4}.

\bibitem[Gear and Petzold(1984)]{GP84}
C.~Gear and L.~Petzold.
\newblock O{DE} methods for the solution of differential/algebraic systems.
\newblock \emph{SIAM Journal of Numerical Analysis}, 21\penalty0 (4):\penalty0
  716--728, 1984.
\newblock URL \url{https://doi.org/10.1137/0721048}.

\bibitem[Golubitsky et~al.(2009)Golubitsky, Kondratieva, Ovchinnikov, and
  Szanto]{GolubitskyEtAl}
O.~Golubitsky, M.~Kondratieva, A.~Ovchinnikov, and A.~Szanto.
\newblock A bound for orders in differential {N}ullstellensatz.
\newblock \emph{Journal of Algebra}, 322\penalty0 (11):\penalty0 3852--3877,
  2009.
\newblock URL \url{http://dx.doi.org/10.1016/j.jalgebra.2009.05.032}.

\bibitem[Goward and Smith(2006)]{GowardSmith}
R.~Goward and K.~Smith.
\newblock The jet scheme of a monomial scheme.
\newblock \emph{Communications in Algebra}, 34\penalty0 (5):\penalty0
  1591--1598, 2006.
\newblock URL \url{https://doi.org/10.1080/00927870500454927}.

\bibitem[Grigoriev(1989)]{Grigoriev}
D.~Grigoriev.
\newblock Complexity of quantifier elimination in the theory of ordinary
  differential equations.
\newblock \emph{Lecture Notes Computer Science}, 378:\penalty0 11--25, 1989.
\newblock URL \url{http://dx.doi.org/10.1007/3-540-51517-8_81}.

\bibitem[Gustavson et~al.(2016)Gustavson, Kondratieva, and
  Ovchinnikov]{Gustavson}
R.~Gustavson, M.~Kondratieva, and A.~Ovchinnikov.
\newblock New effective differential {N}ullstellensatz.
\newblock \emph{Advances in Mathematics}, 290:\penalty0 1138--1158, 2016.
\newblock URL \url{http://dx.doi.org/10.1016/j.aim.2015.12.021}.

\bibitem[Harrington et~al.(2019)Harrington, Ho, and Meshkat]{HHM16}
H.~A. Harrington, K.~L. Ho, and N.~Meshkat.
\newblock A parameter-free model comparison test using differential algebra.
\newblock \emph{Complexity}, 2019.
\newblock URL \url{https://doi.org/10.1155/2019/6041981}.

\bibitem[Hartshorne(1977)]{Hartshorne}
R.~Hartshorne.
\newblock \emph{Algebraic Geometry}.
\newblock Number~52 in Graduate Texts in Mathematics. Springer, 1977.
\newblock URL \url{http://dx.doi.org/10.1007/978-1-4757-3849-0}.

\bibitem[Heintz(1983)]{Heintz}
J.~Heintz.
\newblock Definability and fast quantifier elimination in algebraically closed
  fields.
\newblock \emph{Theoretical Computer Science}, 24\penalty0 (3):\penalty0
  239--277, 1983.
\newblock URL \url{http://dx.doi.org/10.1016/0304-3975(83)90002-6}.

\bibitem[Hong et~al.(2019)Hong, Ovchinnikov, Pogudin, and Yap]{SIAN}
H.~Hong, A.~Ovchinnikov, G.~Pogudin, and C.~Yap.
\newblock {SIAN}: software for structural identifiability analysis of {ODE}
  models.
\newblock \emph{Bioinformatics}, 35\penalty0 (16):\penalty0 2873--2874, 2019.
\newblock URL \url{https://doi.org/10.1093/bioinformatics/bty1069}.

\bibitem[Hong et~al.(2020)Hong, Ovchinnikov, Pogudin, and Yap]{HOPY}
H.~Hong, A.~Ovchinnikov, G.~Pogudin, and C.~Yap.
\newblock Global identifiability of differential models.
\newblock \emph{Communications on Pure and Applied Mathematics}, 73\penalty0
  (9):\penalty0 1831--1879, 2020.
\newblock URL \url{https://doi.org/10.1002/cpa.21921}.

\bibitem[Hrushovski and Pillay(2000)]{HrushovskiPillay}
E.~Hrushovski and A.~Pillay.
\newblock Effective bounds for the number of transcendental points on
  subvarieties of semi-abelian varieties.
\newblock \emph{American Journal of Mathematics}, 122\penalty0 (3):\penalty0
  439--450, 2000.
\newblock URL \url{http://dx.doi.org/10.1353/ajm.2000.0020}.

\bibitem[Hubert(2000)]{HubertDiff}
E.~Hubert.
\newblock Factorization-free decomposition algorithms in differential algebra.
\newblock \emph{Journal of Symbolic Computation}, 29\penalty0 (4-5):\penalty0
  641--662, 2000.
\newblock URL \url{http://dx.doi.org/10.1006/jsco.1999.0344}.

\bibitem[Jelonek(2005)]{Jelonek}
Z.~Jelonek.
\newblock On the effective {N}ullstellensatz.
\newblock \emph{Inventiones Mathematicae}, 162\penalty0 (1):\penalty0 1--17,
  2005.
\newblock URL \url{http://dx.doi.org/10.1007/s00222-004-0434-8}.

\bibitem[Jeronimo and Sabia(2002)]{JeronimoSabia}
G.~Jeronimo and J.~Sabia.
\newblock Effective equidimensional decomposition of affine varieties.
\newblock \emph{Journal of Pure and Applied Algebra}, 169\penalty0
  (2):\penalty0 229 -- 248, 2002.
\newblock URL \url{https://doi.org/10.1016/S0022-4049(01)00083-4}.

\bibitem[Jiafan(2009)]{Jiafan2009}
Z.~Jiafan.
\newblock Nonlinear systems fault diagnosis with differential elimination.
\newblock In \emph{2009 International Conference on Computational Intelligence
  and Natural Computing}, volume~2, pages 186--189, 2009.
\newblock URL \url{http://dx.doi.org/10.1109/CINC.2009.38}.

\bibitem[Kolchin(1973)]{Kol}
E.~Kolchin.
\newblock \emph{Differential Algebra and Algebraic Groups}.
\newblock Academic Press, New York, 1973.

\bibitem[Kunkel and Mehrmann(2006)]{DAEKM}
P.~Kunkel and V.~Mehrmann.
\newblock \emph{Differential-Algebraic Equations: Analysis and Numerical
  Solution}.
\newblock European Mathematical Society, 2006.

\bibitem[Li and Yuan(2019)]{LiYuan2019}
W.~Li and C.~M. Yuan.
\newblock Elimination theory in differential and difference algebra.
\newblock \emph{Journal of Systems Science and Complexity}, 32\penalty0
  (1):\penalty0 287--316, 2019.
\newblock URL \url{https://doi.org/10.1007/s11424-019-8367-x}.

\bibitem[Li et~al.(2015)Li, Yuan, and Gao]{LiYuanGao2015}
W.~Li, C.~M. Yuan, and X.~S. Gao.
\newblock Sparse differential resultant for {L}aurent differential polynomials.
\newblock \emph{Foundations of Computational Mathematics}, 15\penalty0
  (2):\penalty0 451--517, 2015.
\newblock URL \url{https://doi.org/10.1007/s10208-015-9249-9}.

\bibitem[Ljung and Glad(1994)]{LG94}
L.~Ljung and T.~Glad.
\newblock On global identifiability for arbitrary model parametrizations.
\newblock \emph{Automatica}, 30\penalty0 (2):\penalty0 265--276, 1994.
\newblock URL \url{https://doi.org/10.1016/0005-1098(94)90029-9}.

\bibitem[Macaulay(1916)]{Macaulay}
F.~S. Macaulay.
\newblock \emph{The Algebraic Theory of Modular Systems}.
\newblock Cambridge University Press, 1916.

\bibitem[Marker(1996)]{MarkerMTDF}
D.~Marker.
\newblock Model theory of differential fields.
\newblock In \emph{Model Theory of Fields}, pages 38--113. Springer, Berlin,
  1996.
\newblock URL \url{http://projecteuclid.org/euclid.lnl/1235423156}.

\bibitem[Meshkat et~al.(2018)Meshkat, Rosen, and Sullivant]{meshkat2018}
N.~Meshkat, Z.~Rosen, and S.~Sullivant.
\newblock Algebraic tools for the analysis of state space models.
\newblock In \emph{The 50th Anniversary of Gröbner Bases}, pages 171--205,
  Tokyo, Japan, 2018.
\newblock URL \url{https://doi.org/10.2969/aspm/07710171}.

\bibitem[Ollivier(1990)]{OllivierPhD}
F.~Ollivier.
\newblock \emph{Le probl{\`e}me de l’identifiabilit{\'e} structurelle
  globale: approche th{\'e}orique, m{\'e}thodes effectives et bornes de
  complexit{\'e}}.
\newblock PhD thesis, {\'E}cole polytechnique, 1990.

\bibitem[Ovchinnikov et~al.(2020{\natexlab{a}})Ovchinnikov, Pillay, Pogudin,
  and Scanlon]{allident}
A.~Ovchinnikov, A.~Pillay, G.~Pogudin, and T.~Scanlon.
\newblock Computing all identifiable functions for {ODE} models,
  2020{\natexlab{a}}.
\newblock URL \url{https://arxiv.org/abs/2004.07774}.

\bibitem[Ovchinnikov et~al.(2020{\natexlab{b}})Ovchinnikov, Pogudin, and
  Thompson]{OPT19}
A.~Ovchinnikov, G.~Pogudin, and P.~Thompson.
\newblock Input-output equations and identifiability of linear {ODE} models.
\newblock 2020{\natexlab{b}}.
\newblock URL \url{https://arxiv.org/abs/1910.03960}.

\bibitem[Ovchinnikov et~al.(2020{\natexlab{c}})Ovchinnikov, Pogudin, and
  Thompson]{ident-compare}
A.~Ovchinnikov, G.~Pogudin, and P.~Thompson.
\newblock Parameter identifiability and input-output equations.
\newblock 2020{\natexlab{c}}.
\newblock URL \url{https://arxiv.org/abs/2007.14787}.

\bibitem[Pierce(2014)]{Pierce}
D.~Pierce.
\newblock Fields with several commuting derivations.
\newblock \emph{Journal of Symbolic Logic}, 79\penalty0 (1):\penalty0 1--19,
  2014.
\newblock URL \url{http://dx.doi.org/10.1017/jsl.2013.19}.

\bibitem[Pogudin(2018{\natexlab{a}})]{Pogudin:DiffNoether}
G.~Pogudin.
\newblock A differential analog of the {N}oether normalization lemma.
\newblock \emph{International Mathematics Research Notices}, 2018\penalty0
  (4):\penalty0 1177--1199, 2018{\natexlab{a}}.
\newblock URL \url{http://dx.doi.org/10.1093/imrn/rnw275}.

\bibitem[Pogudin(2018{\natexlab{b}})]{PogudinJets}
G.~Pogudin.
\newblock Products of ideals and jet schemes.
\newblock \emph{Journal of Algebra}, 502:\penalty0 61--78, 2018{\natexlab{b}}.
\newblock URL \url{https://doi.org/10.1016/j.jalgebra.2018.01.027}.

\bibitem[Recio et~al.(2018)Recio, Sendra, and Villarino]{RSV2018}
T.~Recio, R.~Sendra, and C.~Villarino.
\newblock The importance of being zero.
\newblock In \emph{Proceedings of the ACM on 2018 International Symposium on
  Symbolic and Algebraic Computation}, pages 327--333, 2018.
\newblock URL \url{http://dx.doi.org/10.1145/3208976.3208981}.

\bibitem[Ritt(1932)]{Ritt}
J.~F. Ritt.
\newblock \emph{Differential Equations from the Algebraic Standpoint}.
\newblock Colloquium Publications. American Mathematical Society, 1932.

\bibitem[Rueda(2013)]{Rueda2013}
S.~Rueda.
\newblock Linear sparse differential resultant formulas.
\newblock \emph{Linear Algebra and its Applications}, 438\penalty0
  (11):\penalty0 4296--4321, 2013.
\newblock URL \url{https://doi.org/10.1016/j.laa.2013.01.016}.

\bibitem[Rueda and Sendra(2010)]{RuedaSendra2010}
S.~Rueda and R.~Sendra.
\newblock Linear complete differential resultants and the implicitization of
  linear {DPPE}s.
\newblock \emph{Journal of Symbolic Computation}, 45:\penalty0 324--341, 2010.
\newblock URL \url{https://doi.org/10.1016/j.jsc.2009.09.003}.

\bibitem[Saccomani et~al.(2003)Saccomani, Audoly, and
  D'Angi{\'o}]{Saccomani2003}
M.~Saccomani, S.~Audoly, and L.~D'Angi{\'o}.
\newblock Parameter identifiability of nonlinear systems: the role of initial
  conditions.
\newblock \emph{Automatica}, 39:\penalty0 619--632, 2003.
\newblock URL \url{https://doi.org/10.1016/S0005-1098(02)00302-3}.

\bibitem[Seidenberg(1956)]{Seidenberg}
A.~Seidenberg.
\newblock An elimination theory for differential algebra.
\newblock \emph{University of California publications in Mathematics},
  III\penalty0 (2):\penalty0 31--66, 1956.

\bibitem[Shafarevich(2013)]{Shafarevich}
I.~Shafarevich.
\newblock \emph{Basic Algebraic Geometry 1}.
\newblock University Lecture Series. Springer, 2013.
\newblock URL \url{http://dx.doi.org/10.1007/978-3-642-37956-7}.

\bibitem[Towsner and Simmons(2019)]{TS2019}
H.~Towsner and W.~Simmons.
\newblock Proof mining and effective bounds in differential polynomial rings.
\newblock \emph{Advances in Mathematics}, 343:\penalty0 567--623, 2019.
\newblock URL \url{https://doi.org/10.1016/j.aim.2018.11.026}.

\bibitem[Zippel(1993)]{Zippel}
R.~Zippel.
\newblock \emph{Effective Polynomial Computation}.
\newblock Springer, 1993.
\newblock URL \url{http://dx.doi.org/10.1007/978-1-4615-3188-3}.

\end{thebibliography}
\end{document}